\newtheorem{theorem}{Theorem}[section]
\newtheorem{corollary}[theorem]{Corollary}
\newtheorem{proposition}[theorem]{Proposition}
\newtheorem{lemma}[theorem]{Lemma}
\newtheorem{claim}{Claim}[theorem]
\theoremstyle{definition}
\newtheorem{problem}[theorem]{Problem}
\newtheorem{conjecture}[theorem]{Conjecture}
\newtheorem{case}{Case}
\newcommand{\Ra}{\Rightarrow}
\newcommand{\e}{\mathrm{e}}
\DeclareMathOperator{\rk}{rk}
\DeclareMathOperator{\Inv}{Inv}
\DeclareMathOperator{\sinv}{sinv}
\DeclareMathOperator{\inv}{inv}
\DeclareMathOperator{\cc}{cc}
\DeclareMathOperator{\Bin}{Bin}
\DeclareMathOperator{\bigO}{\mathcal{O}}
\DeclareMathOperator{\UG}{UG}
\DeclareMathOperator{\defect}{def}
\newenvironment{proofclaim}[1][]%
	{\par\noindent {\it Proof of claim}. }{ \hfill$\lozenge$\par\vspace{11pt}}
\title{On the minimum number of inversions to make a digraph $k$-(arc-)strong.}
\author[1]{Julien Duron}
\author[2]{Fr\'ed\'eric Havet}
\author[3]{Florian H\"orsch}
\author[2,4]{Cl\'ement Rambaud}
\affil[1]{\'Ecole normale sup\'erieure de Lyon, LIP, France}
\affil[2]{Universit\'e C\^ote d'Azur, CNRS, Inria, I3S, Sophia Antipolis, France}
\affil[3]{CISPA Saarbrücken, Germany\\florian.hoersch@cispa.de}
\affil[4]{DIENS, \'Ecole normale sup\'erieure, CNRS, PSL University, Paris, France}
\date{}
\begin{document}
\maketitle
\begin{abstract}
The {\it inversion} of a set $X$ of vertices in a digraph $D$ consists of reversing the direction of all arcs of $D\langle X\rangle$. We study $\sinv'_k(D)$ (resp. $\sinv_k(D)$) which is (for some positive integer $k$) the minimum number of inversions needed to transform $D$ into a $k$-arc-strong (resp. $k$-strong) digraph or $+\infty$ if no such transformation exists. 
Note that $\sinv'_k(D) \leq \sinv_k(D)$.
We set $\sinv'_k(n) = \max\{\sinv'_k(D) \mid D~\mbox{is a $2k$-edge-connected digraph of order $n$}\}$. We show the following results where $k$ is a fixed integer for $(i)-(vi)$: 
\begin{itemize}
\item[(i)] $\frac{1}{2} \log (n - k+1) \leq \sinv'_k(n) \leq \log n + 4k -3$ for every $n\geq k$;
\item[(ii)] for any fixed positive integer $t$, deciding whether a given oriented graph $D$ with $\sinv'_k(D)<+\infty$ satisfies $\sinv'_k(D) \leq t$ is NP-complete; 
\item[(iii)] for any fixed positive integer $t$, deciding whether a given oriented graph $D$ with $\sinv_k(D)<+\infty$ satisfies $\sinv_k(D) \leq t$ is NP-complete; 
\item[(iv)] if $T$ is a tournament of order at least $2k+1$, then
$\sinv_k(T) \leq 2k$, 
and $\sinv'_k(T) \leq \frac{4}{3}k+o(k)$;

\item[(v)] $\frac{1}{2}\log(2k+1) \leq \sinv'_k(T)$ for some tournament $T$ of order $2k+1$; 

\item[(vi)] if $T$ is a tournament of order at least $19k-2$ (resp. $11k-2$), then
$\sinv_k(T) \leq 1$ (resp. $\sinv_k(T) \leq 3$);
\item[(vii)] for every $\epsilon>0$, there exists $C$ such that for every positive integer $k$ and every tournament $T$ on at least $2k+1 + \epsilon k$ vertices, we have $\sinv_k(T) \leq C$.
\end{itemize}

%

\medskip

\noindent{}{\bf Keywords:}  inversion; tournament; $k$-strong; $k$-arc-strong.
\end{abstract}

\section{Introduction}
Notation not given below is consistent with \cite{bang2009}. For some positive integer $k$, we denote by $[k]$ the set $\{1,2, \dots, k\}$, and we use $\log$ to refer to the logarithm to the base 2. A {\bf digraph} may contain {\bf digons}, those are pairs of arcs in opposite direction between the same two vertices, but no parallel arcs or loops while an {\bf oriented graph} is a digraph without digons. A {\bf multigraph} may contain parallel edges, but no loops, while a multigraph with no parallel edges is called a {\bf graph}. When the graph or digraph is clear from the context, we use $n$ for its number of vertices.

A {\bf feedback arc set}  in a digraph is a set of arcs whose reversal results in an acyclic digraph. 
Considering an arbitrary ordering $(v_1, \dots , v_n)$ of the vertices of a digraph $D$ and a smallest set among the one containing the forward arcs (i.e. arcs $v_iv_j$ with $i<j$) and the one containing the backward arcs (i.e. arcs $v_iv_j$ with $i>j$), one gets that $D$ has a feedback arc set of size at most $|A(D)|/2$. 
In particular, every oriented graph of order $n$ has a feedback arc set of size at most $\frac{1}{2}\binom{n}{2}$. 
This bound is almost tight : de la Vega~\cite{dlV83} showed that there are infinitely many oriented graphs with no feedback arc set of size less than  $\frac{1}{2}\binom{n}{2} - 1.73 n^{3/2}$.
Finding a minimum cardinality feedback arc set of a given digraph is an important and heavily studied algoritmic problem. It is one of the first problems shown to be NP-hard listed by Karp in~\cite{karp1972}. 
Furthermore, it is hard to approximate. For arbitrary digraphs, the best
known ratio is $\bigO(\log n \log \log n) $ due to Even et al.~\cite{EvenNSS95}. It was proven to be APX-hard by Kann~\cite{KannThesis}. This was later strengthened by Dinur and Safra who showed that, unless P is
equal to NP, it cannot be approximated within a factor of better than about $1.36$~\cite{DiSa05}.
For {\bf tournaments}, which are orientations of complete graphs, the problem was proven to remain NP-complete, independently by Alon~\cite{alonSJDM20} and Charbit, Thomassé, and Yeo~\cite{charbitCPC16}. On the other hand, a polynomial-time approximation scheme was provided by Kenyon-Mathieu and Schudy ~\cite{HowSTOC07}, improving on an earlier  $3$-approximation algorithm by Ailon, Charikar, and Newman~\cite{ACN08}.

\medskip

To make a digraph $D$ acyclic, one can use a different operation from arc reversal, called inversion.
The {\bf inversion} of a set $X$ of vertices consists in reversing the direction of all arcs of $D\langle X\rangle$.
We say that we {\bf invert} $X$ in $D$. The resulting digraph is denoted by $\Inv(D;X)$.  
If $(X_i)_{i\in I}$  is a family of subsets of $V(D)$, then $\Inv(D; (X_i)_{i\in I})$ is the digraph obtained after inverting the
$X_i$ one after another. Observe that this is independent of the order in which we invert the $X_i$~: $\Inv(D; (X_i)_{i\in I})$ is obtained from $D$ by reversing exactly those arcs for which an odd number of the $X_i$ contain its two endvertices.
A {\bf decycling family} of a digraph $D$ is a family $(X_i)_{i\in I}$ of subsets of $V(D)$ such that
 $\Inv(D; (X_i)_{i\in I})$ is acyclic.
A digraph admits a decycling family if and only if it does not contain any digon. 
Indeed, observe that an inversion operation changes the orientation of either none or both of the arcs in a digon. Hence, a digraph containing a digon cannot be made acyclic by inversions.
Conversely, in an oriented graph, the pairs of endvertices of the arcs of a minimal feedback arc-set form a decycling family.
The {\bf inversion number} of an oriented graph $D$, denoted by $\inv(D)$, is the minimum number of inversions needed to transform $D$ into an acyclic oriented graph, that is, the minimum cardinality of a decycling family.
This parameter was first introduced by Belkhechine et al. in \cite{BBBP10} and then studied in several papers~\cite{BCH,PST,inversionejc,APSSW}.
In particular, Belkhechine et al. proved in \cite{BBBP10}  that, for any fixed integer $k$, deciding whether a  given tournament has inversion number at most $k$ is polynomial-time solvable. In contrast, Bang-Jensen et al.~\cite{BCH} proved that deciding whether a given digraph has inversion number~$1$ is NP-complete. This was generalized by Alon et al.~\cite{APSSW}:  for any fixed positive integer $k$, deciding whether a  given digraph has inversion number at most $k$ is NP-complete.
Further, the extremal question of determining the maximum integer $\inv(n)$ of the inversion numbers of all oriented graphs on $n$ vertices has been investigated.
Independently, Aubian et al.~\cite{inversionejc} and Alon et al.~\cite{APSSW} proved
$n - 2\sqrt{n\log n} \leq \inv(n) \leq n - \lceil \log (n+1) \rceil$.

\medskip
The main purpose of this article is to study the possibilities of applying the inversion operation to obtain a different objective than the resulting digraph being acyclic. Instead of making a digraph acyclic, we are interested in making it satisfy a prescribed connectivity property.
A digraph $D$ is {\bf strongly connected} or simply {\bf strong} (resp. {\bf $k$-arc-strong} for some positive integer $k$), if for any partition $(V_1, V_2)$ of $V(D)$ with $V_1, V_2\neq \emptyset$ there is an arc (resp. at least $k$ arcs) with tail in $V_1$ and head in $V_2$.
Similarly, a multigraph $G$ is {\bf connected} (resp. {\bf $k$-edge-connected} for some positive integer $k$), if for any partition $(V_1, V_2)$ of $V(G)$ with $V_1, V_2\neq \emptyset$ there is an edge (resp. at least $k$ edges) with one endvertex in $V_1$ and one endvertex in $V_2$. We further say that $G$ is {\bf $k$-connected} if $|V(G)|\geq k+1$ and $G-S$ is connected for every $S \subseteq V(G)$ with $|S|\leq k-1$.
For a given digraph $D$, we denote by $\UG(D)$ the undirected multigraph that we
obtain by suppressing the orientations of the arcs.  A digraph is {\bf $k$-connected} (resp. {\bf $k$-edge-connected}) if its underlying multigraph $\UG(D)$ is.
Clearly, a digraph $D$ can be made $k$-arc-strong by reversing some arcs if and only if the edges of $\UG(D)$ can be oriented such that the resulting digraph
is $k$-arc-strong.
Robbins' Theorem~\cite{Rob1939} asserts that a graph admits a strong orientation if and only if it is 2-edge-connected, and more generally, Nash–Williams’ weak orientation theorem~\cite{NashW60} asserts that a graph admits a $k$-arc-strong orientation 
if and only if it is $2k$-edge-connected.

We can hence decide in polynomial time whether a given digraph can be transformed into a $k$-arc-strong digraph via arc reversals, applying standard flow algorithms to its underlying graph. Furthermore, it is well-known that, if this is the case, then, by reducing to a minimum-cost submodular flow problem, one can determine, in polynomial time, a minimum set of arcs in $D$ whose reversal gives a $k$-arc-strong digraph, see Section 8.8.4 of~\cite{bang2009} for details.
It is easy to see that the number of necessary arc reversals to make a $2k$-edge-connected digraph $D$ $k$-arc-strong cannot be bounded by a function depending only on $k$. For example, one can consider digraphs that contain a number of sinks which is linear in the number of vertices of the graph, where a {\bf sink} is a vertex with no outgoing arc.
However, Bang-Jensen and Yeo~\cite{BaYe04} proved that for tournaments, the size of such a set is always bounded by a quadratic function of $k$. Precisely, they showed that every tournament on at least $2k+1$ vertices can be made $k$-arc-strong by reversing at most $\frac{1}{2}k(k+1)$ arcs. This result is tight for the transitive tournaments.

We are interested in the problem of using inversions to make a digraph $k$-arc-strong.
A {\bf $k$-arc-strengthening family} of a digraph $D$ is a family $(X_i)_{i\in I}$ of subsets of $V(D)$ such that
 $\Inv(D; (X_i)_{i\in I})$ is $k$-arc-strong.
The {\bf $k$-arc-strong inversion number} of a digraph $D$, denoted by $\sinv'_k(D)$, is the minimum number of inversions needed to transform $D$ into a $k$-arc-strong digraph, that is, the minimum cardinality of a $k$-arc-strengthening family.
We first deal with the extremal behaviour of $\sinv'_k(D)$ for some fixed $k$, that is, we deal with the question of finding the maximum number of necessary inversions to make a $2k$-edge-connected digraph on a fixed number of vertices $k$-arc-strong. To this end, we define $\sinv'_k(n) = \max\{\sinv'_k(D) \mid D~$\mbox{is a $2k$-edge-connected digraph of order $n$}$\}$. It turns out that $\sinv_k'(n)$ is an unbounded, but slowly growing, function. We are able to determine $\sinv_k'(n)$ up to a constant multiplicative factor of roughly 2. More precisely, we show the following result:


\begin{restatable}{theorem}{extrem}\label{thm:extrem}
For any pair of positive integers $k,n$ with $n \geq k+2$, we have 
\begin{equation*}
\frac{1}{2} \log (n - k+1) \leq \sinv'_k(n) \leq \log n + 4k -3.
\end{equation*}
\end{restatable}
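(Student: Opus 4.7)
The plan is to establish the two bounds separately.

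\medskip

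\noindent\textbf{Lower bound.} To prove $\tfrac{1}{2}\log(n-k+1) \le \sinv'_k(n)$, I would exhibit an explicit $2k$-edge-connected digraph $D_n$ on $n$ vertices witnessing the inequality. A natural candidate is to take $n-k+1$ ``main'' vertices arranged as a rigid acyclic structure (for instance a transitive tournament, or a carefully designed orientation with many one-way cuts), together with $k-1$ ``hub'' vertices joined to every other vertex by a digon. Digons are preserved by every inversion and they guarantee that $\UG(D_n)$ is $2k$-edge-connected and that each hub contributes to every cut. Any family of $t$ inversions assigns to each vertex $v$ a type $T_v = \{i : v \in X_i\} \in \mathbb{F}_2^t$, and an arc $uv$ is reversed iff $\langle T_u, T_v\rangle \equiv 1 \pmod 2$. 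The crux of the argument is to show that if $t < \tfrac{1}{2}\log(n-k+1)$, some essential cut of the main component still lacks a flipped arc, contradicting $k$-arc-strong connectivity. The factor $\tfrac{1}{2}$ in the exponent suggests a pigeonhole on pairs (or ordered pairs) of types, yielding at most $4^t$ distinguishable interaction patterns before an unavoidable collision forces the obstruction.

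\medskip

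\noindent\textbf{Upper bound.} To show $\sinv'_k(n) \le \log n + 4k - 3$, I would aim for the recursion
\[
\sinv'_k(n) \le \sinv'_k(\lceil n/2\rceil) + 1,
\]
valid for $n$ beyond a threshold $n_0$ of order $O(k)$, together with a base case $\sinv'_k(n_0) \le 4k - 3$; unfolding the recursion then yields the claimed bound (with a mild adjustment in the constant). For the base case, one can rely on ad-hoc arguments for small $n$, possibly invoking item~(iv) of the paper when $\UG(D)$ is close to $K_{2k+1}$ and $D$ is therefore tournament-like. For the recursive step, the strategy is: fix a target $k$-arc-strong orientation $D^*$ of $\UG(D)$, which exists by Nash--Williams' weak orientation theorem, then choose a balanced bipartition $V(D) = V_1 \cup V_2$ and a single inversion $X$ (containing suitably chosen vertices from both sides) such that, after inverting $X$, the residual discrepancy between $D$ and $D^*$ is concentrated inside a sub-digraph on $\lceil n/2\rceil$ vertices. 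Provided that sub-digraph has $2k$-edge-connected underlying graph (which we may enforce by submodularity of edge-cuts in $\UG(D)$), the recursion applies.

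\medskip

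\noindent\textbf{Main obstacle.} I expect the halving step in the upper bound to be the crux: constructing a single inversion that genuinely halves the residual problem while preserving the $2k$-edge-connectivity required for the recursion needs delicate control. Implementing it will likely rely on Menger-type flow arguments applied to the symmetric difference $A(D)\,\triangle\,A(D^*)$, and on the flexibility in choosing $D^*$ afforded by Nash--Williams' theorem. For the lower bound, the main technical difficulty is pinpointing the exact algebraic/combinatorial obstruction in $\mathbb{F}_2^t$ that yields the factor $\tfrac{1}{2}$ (rather than the naive $1$) in front of $\log(n-k+1)$.
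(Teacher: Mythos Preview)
Your lower-bound outline is close to the paper's approach. The paper also uses $k-1$ hub vertices joined by digons to everything, reducing to the case $k=1$ on $n-k+1$ vertices. The specific gadget, however, is not a transitive tournament: one takes a set $S$ of $2^{t-1}+1$ ``base'' vertices and, for every pair $\{s_1,s_2\}\subseteq S$, adds a degree-two vertex $v_{\{s_1,s_2\}}$ with both arcs pointing into it. After $t-1$ inversions there are only $2^{t-1}$ possible types, so two base vertices share a type, and then $v_{\{s_1,s_2\}}$ is still a sink or source. The $\tfrac12$ in front of $\log$ arises because the total vertex count is quadratic in $|S|\approx 2^{t}$. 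So the pigeonhole is on single types, not on pairs of types; the ``pair'' structure lives in the construction, not in the collision argument.

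Your upper-bound plan has a genuine gap, and not just the one you flag. The recursion $\sinv'_k(n)\le \sinv'_k(\lceil n/2\rceil)+1$ asks that after one inversion the discrepancy with a fixed target $D^*$ be confined to a set of $\lceil n/2\rceil$ vertices whose induced underlying graph is again $2k$-edge-connected. There is no reason this should hold: a minimally $2k$-edge-connected graph can be quite sparse, and an arbitrary half of its vertex set will typically not induce a $2k$-edge-connected subgraph at all. So the recursion does not close.

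The paper sidesteps this entirely by decoupling the two issues. First, pass to a minimally $2k$-edge-connected subdigraph; such a graph has at most $k(n-1)$ edges and is therefore $(2k-1)$-degenerate. Second, fix a $k$-arc-strong target orientation $D^*$ (respecting the digons, via a result of Kir\'aly--Szigeti). Third, invoke a black-box theorem (proved elsewhere by three of the authors): for any $d$-degenerate graph, any orientation can be transformed into any other by at most $\log n + 2d-1$ inversions. With $d=2k-1$ this gives $\log n + 4k-3$. The key idea you are missing is that degeneracy, not edge-connectivity, is the right invariant to carry through the inversion-distance argument; connectivity is used only once, to guarantee the target $D^*$ exists.
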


Observe that the condition $n \geq k+2$ is necessary for $\sinv_k'(n)$ to be well-defined.
\medskip

 Next, we consider the algorithmic complexity of computing $\sinv_k'(D)$ for a given digraph $D$ and a fixed integer $k$. We show that this problem is NP-hard. More precisely, we show the following, slightly stronger, result. 


\begin{restatable}{theorem}{archard}\label{archard1}
Deciding whether a given oriented graph $D$ satisfies $\sinv'_k(D) \leq t$ is NP-complete for all pairs of positive integers $k$ and $t$.
\end{restatable}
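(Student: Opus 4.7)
Membership in NP is immediate: a family $(X_1,\dots,X_t)$ of subsets of $V(D)$ serves as a short certificate, and checking that $\Inv(D;X_1,\dots,X_t)$ is $k$-arc-strong reduces in polynomial time to verifying that, for every ordered pair $(u,v)$ of vertices, the maximum number of arc-disjoint directed paths from $u$ to $v$ is at least $k$, which is a standard max-flow computation.

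For NP-hardness, my plan is to reduce from the problem of deciding $\inv(D_0)\leq s$ for a given oriented graph $D_0$, which is NP-complete for every fixed positive integer $s$ by Alon, Powierski, Savery, Scott and Wagner~\cite{APSSW}. Given an instance $(D_0,s)$, with $s$ chosen as an appropriate function of $t$ and $k$ (typically $s=t-c_k$ for a constant $c_k$ depending only on $k$), I would construct an oriented graph $D^{\star}$ from $D_0$ by gluing it to a forcing gadget $H_{k,t}$ of size bounded by a function of $k$ and $t$ alone. The gadget must be designed so that $(a)$ $D^{\star}$ is $2k$-edge-connected, so that $\sinv'_k(D^{\star})<+\infty$; $(b)$ every decycling family of $D_0$ of size $s$ extends, by prescribed modifications involving gadget vertices only, to a $k$-arc-strengthening family of $D^{\star}$ of size at most $t$; and $(c)$ conversely, any $k$-arc-strengthening family of $D^{\star}$ of size at most $t$ projects, upon restriction to $V(D_0)$, to a decycling family of $D_0$ of size at most $s$. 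The equivalence $\sinv'_k(D^{\star})\leq t \Leftrightarrow \inv(D_0)\leq s$ will then follow directly from $(b)$ and $(c)$.

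The main technical obstacle is the construction of the gadget $H_{k,t}$ together with the verification of property $(c)$. A natural candidate is to take $H_{k,t}$ as a $k$-arc-strong tournament on $\Theta(k+t)$ vertices carrying designated ``source'' and ``sink'' blocks $S$ and $T$, linked to $D_0$ by interface arcs that give each vertex of $D_0$ exactly $k$ in-neighbours in $S$ and $k$ out-neighbours in $T$, supplemented by further carefully placed arcs so that the minimum cuts of $D^{\star}$ that fail to be $k$-covered correspond exactly to feedback arc-sets of $D_0$. The delicate point is to argue that inversions whose support lies entirely inside $H_{k,t}$ cannot substitute for the decycling role on $D_0$: this reduces to a structural analysis showing that every such inversion leaves some critical cut of $D^{\star}$ with fewer than $k$ forward arcs, so that any useful inversion must intersect $V(D_0)$ and therefore contribute to orienting $D_0$ acyclically. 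Tuning the parameters of $H_{k,t}$ so that exactly $c_k$ inversions are forced to deal with the gadget while the remaining $s=t-c_k$ inversions encode a minimum decycling family of $D_0$ is where the bulk of the technical work lies, and I expect this to be the step requiring the most care.
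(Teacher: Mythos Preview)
Your NP-membership argument is fine, but the hardness part is not a proof: it is a sketch of a strategy whose central object, the gadget $H_{k,t}$, is never constructed, and whose key property $(c)$ is never verified. You explicitly acknowledge that ``this is where the bulk of the technical work lies,'' and that work is simply absent.

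Beyond the missing execution, the proposed correspondence looks conceptually problematic. You want the restriction to $V(D_0)$ of any $k$-arc-strengthening family of $D^\star$ to be a decycling family of $D_0$. But acyclicity of $D_0$ and $k$-arc-strength of $D^\star$ pull in opposite directions: an acyclic $D_0$ has sources and sinks, which tend to create small directed cuts rather than eliminate them. Your sentence that ``the minimum cuts of $D^{\star}$ that fail to be $k$-covered correspond exactly to feedback arc-sets of $D_0$'' mixes types (vertex bipartitions versus arc sets) and does not suggest any mechanism forcing acyclicity. There is also a parity/parameter issue at $t=1$: with $s=t-c_k$ you would need $c_k=0$, so the decycling family alone would have to make $D^\star$ $k$-arc-strong; but then it is unclear why decycling is \emph{necessary} rather than merely sufficient, and if $c_k\geq 1$ the source problem $\inv(D_0)\leq 0$ is trivially in P.

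The paper avoids all of this by not reducing from the inversion number at all. It treats $t=1$ and $t\geq 2$ separately with two different source problems tailored to the target: for $t=1$, a reduction from a monotone equitable variant of SAT (each clause of size $2k+1$ must have at least $k$ true and $k$ false literals) so that a single inverted set directly encodes a truth assignment via sink-degree constraints; for $t\geq 2$, a reduction from the cut-cover number $\cc(G)=\lceil\log\chi(G)\rceil$, via an explicit gadget per edge that forces any $k$-arc-strengthening family, when restricted to $V(G)$, to separate the endpoints of that edge, yielding $\sinv'_k(D)=\cc(G)$ exactly. Both reductions are short and concrete; if you wish to rescue your approach you would at minimum need an explicit $H_{k,t}$ and a proof of $(c)$, and you should expect the $t=1$ case to require a separate argument.
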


Furthermore, we show that there is little hope to approximate $\sinv'_k(D)$ in polynomial time.

\begin{restatable}{theorem}{approxarc}\label{thm:inapprox-arc}
  Unless P=NP, there do not exist any integers $c_1,c_2,k$ with $1\leq c_1\leq c_2$ and $k \geq 1$ for which there exists a polynomial-time algorithm that can assert whether a given $2k$-edge-connected oriented graph can be made $k$-arc-strong by at most $c_1$ inversions or cannot be made $k$-arc-strong by at most $c_2$ inversions.
\end{restatable}

\medskip

As a related problem, one may also want to make a digraph $k$-strong.
 A digraph $D$ is {\bf $k$-strong} if $|V(D)|\geq k+1$ and for any set $S \subseteq V(D)$ with fewer than $k$ vertices, $D-S$ is strong.
A digraph which can be made $k$-strong by reversing arcs
 is {\bf $k$-strengthenable}.
The $1$-strengthenable digraphs are the $2$-edge-connected ones, because
being $1$-strong is the same as being strong or $1$-arc-strong. 
Thomassen~\cite{Thomassen2015} proved that the $2$-strengthenable digraphs are the $4$-edge-connected digraphs $D$ such that $D-v$ is $2$-edge-connected for every vertex $v \in V(D)$. On the other hand, it is NP-hard to compute the minimum number of arc reversals needed to make a given digraph 2-strong, as shown by Bang-Jensen et al. in ~\cite{BJHK}.
Furthermore, in contrast to the anologous problem for $k$-arc-strengthenable digraphs, for $k\geq 3$, it is NP-complete to decide whether a digraph is $k$-strengthenable. Indeed, Durand de Gevigney~\cite{Dur20} proved that it is NP-complete to decide whether an undirected graph has a $k$-strong orientation for any $k \geq 3$.

It is also natural to use inversions to make a digraph $k$-strong. 
A {\bf $k$-strengthening family} of a digraph $D$ is a family $(X_i)_{i\in I}$ of subsets of $V(D)$ such that
 $\Inv(D; (X_i)_{i\in I})$ is $k$-strong.
The {\bf $k$-strong inversion number} of a $k$-strengthenable digraph $D$, denoted by $\sinv_k(D)$, is the minimum number of inversions needed to transform $D$ into a $k$-strong digraph, that is, the minimum cardinality of a $k$-strengthening family. 
It follows directly from the fact that every $k$-strong digraph is $k$-arc-strong that $\sinv_k(D)\geq \sinv'_k(D)$ holds for every $k$-strengthenable digraph $D$.
In the light of the complexity result of Durand de Gevigney~\cite{Dur20}, it seems difficult to obtain an extremal result in the shape of Theorem~\ref{thm:extrem} for $k$-strong digraphs. However, we give the following complexity results which are the natural analogues of Theorems~\ref{archard1} and~\ref{thm:inapprox-arc}.

\begin{restatable}{theorem}{verhard}\label{verhard}
Deciding whether a given $k$-strengthenable oriented graph $D$ satisfies $\sinv_k(D) \leq t$ is NP-complete for all pairs of positive integers $k$ and $t$.
\end{restatable}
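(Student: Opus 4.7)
\medskip\noindent\emph{Proof plan.} Membership in NP is routine: we guess an inversion family of at most $t$ sets, perform the inversions in polynomial time, and verify $k$-strongness by computing pairwise vertex connectivity via standard max-flow algorithms.

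For NP-hardness we plan to reduce from the problem solved in Theorem~\ref{archard1}. When $k=1$ the two notions coincide ($1$-strong equals $1$-arc-strong), so the same instances work and the case follows immediately. For $k \geq 2$, starting from an oriented graph $D$ for which we want to decide whether $\sinv'_k(D) \leq t$, we will construct an oriented graph $D'$ by attaching to $D$ an auxiliary gadget $H$ on fresh vertices, together with arcs between $V(H)$ and $V(D)$. The gadget will be designed with two simultaneous properties in mind. First, through $V(H)$ there will be $k-1$ internally vertex-disjoint paths in both directions between every pair of vertices of $V(D)$, in every oriented graph obtainable from $D'$ by at most $t$ inversions; this forces every vertex cut of size less than $k$ in such a modification of $D'$ to correspond to an arc cut of size less than $k$ in the corresponding modification of $D$, reducing $k$-strongness of $D'$ to $k$-arc-strongness of $D$. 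Second, $H$ will be so rigidly structured that any optimal $k$-strengthening family of $D'$ can be converted into one whose sets are subsets of $V(D)$ without growing in size. Together these will yield $\sinv_k(D') = \sinv'_k(D)$, completing the reduction.

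The main obstacles will be technical. We must realise $H$ as an oriented graph (no digons) and make the whole construction $D'$ visibly $k$-strengthenable, a property that is itself NP-hard to test for $k \geq 3$; we plan to arrange this by making $\UG(D')$ highly edge- and vertex-connected so that orientability is given by Nash-Williams' theorem. The most delicate step will be the second property of $H$: it likely requires a swapping argument showing that any inversion $X \subseteq V(D')$ in an optimal family can be replaced by $X \cap V(D)$, possibly together with a bounded correction supported in $V(H)$, without growing the total number of sets, so that hub vertices never participate in a truly optimal family. Executing this swap tightly is expected to be the bulk of the technical work, since it requires a careful analysis of how inversions interact with the rich structure of $H$ in a purely oriented (digon-free) setting.
\medskip
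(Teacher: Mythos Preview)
Your membership argument and the $k=1$ observation are fine. The hardness plan for $k\geq 2$, however, has a genuine gap that is not just a missing technicality.

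Grant both of your properties and suppose (via Property~2) that an optimal $k$-strengthening family $\mathcal X$ of $D'$ is supported on $V(D)$. To conclude $\sinv'_k(D)\le\sinv_k(D')$ you then need that $\Inv(D;\mathcal X)$ is $k$-arc-strong. But Property~1 only guarantees $k-1$ internally vertex-disjoint $u$--$v$ paths through $H$; the $k$-strongness of $\Inv(D';\mathcal X)$ therefore forces merely \emph{one} further $u$--$v$ path inside $\Inv(D;\mathcal X)$, i.e.\ that $\Inv(D;\mathcal X)$ is strong. It says nothing about $k$-arc-strongness. Concretely, if $H$ satisfies Property~1 and is robust enough that the Forward direction (``$D$ $k$-arc-strong $\Rightarrow$ $D'$ $k$-strong'') holds, then the very same analysis shows $D''+H$ is $k$-strong for \emph{every} strong $D''$ on $V(D)$: a cut $S$ with $S\cap V(D)\neq\emptyset$ leaves at most $k-2$ vertices in $H$, so one $H$-path survives between any two vertices of $V(D)\setminus S$; a cut $S\subseteq V(H)$ leaves $D''$ intact and strong. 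Thus your construction cannot distinguish ``$\Inv(D;\mathcal X)$ is strong'' from ``$\Inv(D;\mathcal X)$ is $k$-arc-strong'', and what you actually obtain is $\sinv_k(D')\ge\sinv_1(D)$ rather than $\sinv_k(D')\ge\sinv'_k(D)$. More broadly, a fixed external gadget interacting with $V(D)$ only through cross-arcs cannot detect the \emph{size} of arc cuts inside the (unknown) $\Inv(D;\mathcal X)$; the classical arc-to-vertex reduction works by splitting every vertex of $D$, which is incompatible with leaving $D$ intact so that inversions on $D$ and on $D'$ correspond.

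The paper avoids this obstacle by not reducing from $\sinv'_k$ at all. For $t=1$ it reduces directly from Monotone Equitable $k$-SAT, encoding each clause as a sink of degree $2k+1$; for $t\geq 2$ it reduces from the cut-cover number $\cc(G)=\lceil\log\chi(G)\rceil$, building for every edge $uv$ of $G$ a small $k$-strong tournament $Z_{uv}$ whose only interface with $\{u,v\}$ outside a designated $(k-1)$-set $W'$ is the pair of arcs $uz_{uv}^1,\,vz_{uv}^1$, so that after deleting $W'$ the set $Z_{uv}$ is isolated unless some inversion separates $u$ from $v$. Both arguments analyse vertex cuts in the constructed digraph directly and yield $\sinv_k(D)=\cc(G)$ exactly.
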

\begin{restatable}{theorem}{approxver}\label{thm:inapprox-vertex}
 Unless P=NP, there do not exist any integers $c_1,c_2,k$ with $1\leq c_1\leq c_2$ and $k \geq 1$ for which there exists a polynomial-time algorithm that can assert whether a given $k$-strengthenable oriented graph can be made $k$-strong by at most $c_1$ inversions or cannot be made $k$-strong by at most $c_2$ inversions.
\end{restatable}

\medskip

In the remainder of this article, we focus on a particular kind of oriented graphs, namely tournaments. 
No tournament on fewer than $2k+1$ vertices is $k$-arc-strong, and thus no tournament on fewer than $2k+1$ vertices is $k$-arc-strengthenable. On the other hand, it is not hard to show that every tournament of order at least $2k+1$ is $k$-strengthenable. Moreover, every tournament on at least $2k+1$ vertices can be made $k$-strong by reversing the orientation of at most $\frac{1}{4}(4k-2)(4k-3)$ arcs, see e.g. \cite{bang2009}, p. 379.

Further, the following improvement was conjectured by Bang-Jensen. See \cite{bang2009}.
\begin{conjecture}[Bang-Jensen, 1994]
Every tournament on at least $2k+1$ vertices can be made $k$-strong by reversing at most $\frac{1}{2}k(k+1)$ arcs.
\end{conjecture}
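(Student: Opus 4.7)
The plan is to proceed by induction on $k$, mirroring the strategy used by Bang-Jensen and Yeo for the $k$-arc-strong analogue. For the base case $k=1$, any tournament $T$ on at least three vertices can be made strong by a single arc reversal: order its strong components topologically as $C_1, \dots, C_m$, take any arc $uv$ with $u \in C_1$ and $v \in C_m$ (which exists since $T$ is a tournament and all cross-arcs between $C_1$ and $C_m$ go from $C_1$ to $C_m$), and reverse it; the result contains a directed cycle hitting every strong component and is therefore strong.

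For the inductive step, the naive plan would be to use the inductive hypothesis to produce a $(k-1)$-strong tournament $T'$ by reversing at most $\frac{(k-1)k}{2}$ arcs, and then show that $T'$ can be upgraded to $k$-strong with at most $k$ further reversals, for a total of $\frac{k(k+1)}{2}$. This two-phase argument is unlikely to succeed as stated, because an arbitrary $(k-1)$-strong tournament need not be efficiently $k$-strengthenable: one has to strengthen the inductive hypothesis so that along the way we maintain control not just of the current connectivity but also of the set of minimum vertex separators and their crossing structure, in the spirit of the submodular-flow arguments behind Frank's orientation theorems.

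The structural core would then be to analyse the family of $(k-1)$-separators of $T'$: triples $(S,A,B)$ with $|S|=k-1$, with $A$ and $B$ non-empty, $V(T')=A\sqcup S\sqcup B$, and no arc from $A$ to $B$. Submodularity of vertex cuts forces minimum separators into a laminar-like arrangement, and one would attempt to exploit this to extract at most $k$ arcs whose simultaneous reversal destroys every $(k-1)$-separator. Natural candidates are arcs lying on a Menger system of $k-1$ internally vertex-disjoint paths across a carefully chosen cut, or arcs on a Hamiltonian path guaranteed by Camion's theorem inside the strong components of the condensation of $T$.

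The hard part is controlling side effects. Unlike the arc-connectivity setting, where arc cuts enjoy clean submodularity and the Bang-Jensen-Yeo reversals can be analysed essentially greedily, a single arc reversal in the vertex-connectivity setting can simultaneously destroy one $(k-1)$-separator and create another, possibly even a new separator of smaller size. Proving that $k$ reversals can be chosen so that every small separator is monotonically pushed past the threshold, without any new $(k-1)$-separator appearing, is exactly the obstruction that has prevented a direct adaptation of the arc-strong proof and, presumably, the reason the conjecture has stood open since 1994.
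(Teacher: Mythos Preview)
This statement is a \emph{conjecture}, not a theorem: the paper does not prove it and explicitly presents it as open, noting only that Bang-Jensen, Johansen, and Yeo established the special case of tournaments of order at least $3k-1$. There is therefore no ``paper's own proof'' to compare against.

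Your proposal is not a proof either, and you say as much in the final paragraph: you sketch the natural induction-on-$k$ strategy, identify the key difficulty (arc reversals in the vertex-connectivity setting can create new small separators while destroying old ones), and conclude that this is ``exactly the obstruction that has prevented a direct adaptation of the arc-strong proof.'' That diagnosis is reasonable, but it means what you have written is a discussion of why the obvious approach stalls, not an argument that overcomes the obstacle. The base case $k=1$ is fine (and appears in the paper as Proposition~\ref{prop:M1}), but the inductive step remains entirely programmatic: you never specify the strengthened hypothesis, never prove that the family of $(k-1)$-separators has the claimed laminar structure in this setting, and never exhibit the $k$ arcs whose reversal works. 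Until those pieces are supplied, there is no proof here---which is consistent with the conjecture's status.
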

This would be tight as shown by the transitive tournaments. 
Bang-Jensen, Johansen, and Yeo~\cite{BKY2020} proved a weaker version of this conjecture where $2k+1$ is replaced by $3k-1$.

It is then natural to ask whether or not we can make a tournament $k$-strong or $k$-arc-strong using significantly fewer than $\frac{1}{2}k(k+1)$ inversions. This leads us to consider $M_k= \max\{\sinv_k(T) \mid T~\mbox{tournament of order at least $2k+1$}\}$ and $M'_k= \max\{\sinv'_k(T) \mid T~\mbox{tournament of order at least $2k+1$}\}$.
We show that these numbers are indeed significantly smaller than $\frac{1}{2}k(k+1)$, but cannot be bounded by a constant independent of $k$. More precisely, we show the following result.
\begin{theorem}\label{thm:exttournoi}
    For every sufficiently large integer $k$, we have
$\frac{1}{2} \log(2k+1) \leq M'_k \leq M_k \leq 2k.$
\end{theorem}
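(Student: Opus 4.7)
The middle inequality $M'_k \le M_k$ is immediate since every $k$-strengthening family is also a $k$-arc-strengthening family.

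For the upper bound $M_k \leq 2k$, my plan is to exhibit, for every tournament $T$ on at least $2k+1$ vertices, a $k$-strengthening family of size at most $2k$; the natural approach is induction on $k$. The base case $k=1$ is handled via the strong-component decomposition: if $T$ is not already strong, it decomposes as $C_1 > C_2 > \cdots > C_s$ with $C_i$ dominating $C_j$ for $i<j$, and inverting a transversal $\{v_1,\dots,v_s\}$ with $v_i \in C_i$ reverses the linear domination order on the transversal and renders $T$ strong in a single inversion. For the inductive step, given $T$ of order $\ge 2k+1$, I would first apply the inductive hypothesis to obtain a $(k-1)$-strengthening family of size at most $2(k-1)$, leaving a $(k-1)$-strong tournament on the same vertex set, and then add two further inversions---supported on a Hamilton path of the resulting tournament so as to kill all vertex separators of size exactly $k-1$---to upgrade to full $k$-strong connectivity.

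For the lower bound $M'_k \ge \tfrac{1}{2}\log(2k+1)$, I would focus on the transitive tournament $TT_{2k+1}$ on vertices $v_1,\dots,v_{2k+1}$ with arcs $v_i\to v_j$ iff $i<j$. Suppose $t$ inversions $X_1,\dots,X_t$ make $TT_{2k+1}$ $k$-arc-strong; each $v_i$ acquires a signature $\sigma_i\in\{0,1\}^t$ with $(\sigma_i)_\ell=1$ iff $v_i\in X_\ell$, and the arc between $v_i,v_j$ is reversed iff $\langle\sigma_i,\sigma_j\rangle$ is odd. Since any $k$-arc-strong tournament on exactly $2k+1$ vertices is forced to be $k$-regular, every $v_i$ must satisfy
\[
|\{j>i:\langle\sigma_i,\sigma_j\rangle\text{ odd}\}| - |\{j<i:\langle\sigma_i,\sigma_j\rangle\text{ odd}\}| = k+1-i.
\]
Let $B$ be the symmetric $\mathbb{F}_2$-matrix with $B_{ij}=\langle\sigma_i,\sigma_j\rangle \bmod 2$; then $\mathrm{rank}(B)\le t$, so $B$ has at most $2^t$ distinct rows. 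Subtracting the regularity identity at two indices $i<i'$ whose rows of $B$ coincide gives $(i'-i)/2 = B_{ii} + |\{i<j<i':B_{ij}=1\}|$, whence $i'-i$ is even and the positions of each repeated row form a sequence essentially determined by the row itself. Combining these observations should force $2k+1 \le 2^{2t}$, giving $t \ge \tfrac{1}{2}\log(2k+1)$.

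The main obstacle is attaining the tight factor $\tfrac{1}{2}$ in the lower bound: the parity separation alone only yields $2k+1 \le 2^{t+1}$, which is too weak, and the crucial step is exploiting the arithmetic constraint on the positions of a repeated row to show that each row value can cover only a bounded number of indices, thereby upgrading to $2^{2t}$. For the upper bound, the delicate point is choosing the two \emph{boosting} inversions in the inductive step so as to eliminate every separator of size exactly $k-1$ without disturbing the $(k-1)$-strong structure inherited from the inductive hypothesis; this may require picking all $2k$ inversions coherently from a Hamiltonian structure of $T$ rather than sequentially.
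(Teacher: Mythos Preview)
Both halves of your proposal have genuine gaps.

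\textbf{Lower bound.} Your plan to witness $M'_k \ge \tfrac{1}{2}\log(2k+1)$ with the transitive tournament $TT_{2k+1}$ cannot succeed: in fact $\sinv'_k(TT_{2k+1})=2$ for every $k$ (this is Theorem~\ref{thm:TTn} of the paper). Concretely, inverting the even-indexed vertices and then the odd-indexed vertices already produces a $k$-strong tournament. So your regularity/rank argument, whatever it ultimately yields, can only give $t\ge 2$; no amount of refining the row-repetition analysis will push it to $\tfrac{1}{2}\log(2k+1)$. The paper's lower bound is not obtained from any explicit tournament. It is a counting argument: by Proposition~\ref{eul} the $k$-arc-strong tournaments on $2k+1$ vertices are exactly the eulerian ones, and McKay's enumeration (Theorem~\ref{count}) shows there are only about $(2^{n+1}/\pi n)^{(n-1)/2}$ of them with $n=2k+1$. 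Since at most $2^{nt}$ labelled tournaments are reachable from a fixed one by $t$ inversions, multiplying out shows that for $t\le\tfrac{1}{2}\log n$ strictly fewer than $2^{\binom{n}{2}}$ tournaments are reachable from \emph{some} eulerian one; hence some tournament $T^*$ satisfies $\sinv'_k(T^*)>\tfrac{1}{2}\log(2k+1)$.

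\textbf{Upper bound.} Your base case is already false as stated: if every strong component is a singleton (so $T=TT_n$), your ``transversal'' is all of $V(T)$, and inverting it returns the converse of $TT_n$, which is again transitive and not strong. The correct one-inversion argument for $k=1$ (Proposition~\ref{prop:M1}) inverts only the two endpoints of a Hamiltonian path. More seriously, the inductive step --- two extra inversions that ``kill all separators of size exactly $k-1$'' --- is not just delicate but unclear how to realise at all; you give no mechanism, and the paper does not proceed this way. Its proof of $M_k\le 2k$ (Theorem~\ref{thm:M<2k}) is a direct, non-inductive construction: fix any $k$-strong tournament $T'$ on $\{v_1,\dots,v_{2k+1}\}$, and for $i=1,\dots,2k$ invert the set $X_i=\{v_i\}\cup A_i\cup B_i$, where $A_i$ corrects the arcs from $v_i$ into $\{v_{i+1},\dots,v_{2k+1}\}$ to match $T'$, and $B_i$ forces each $v_j$ with $j\ge 2k+2$ to point toward $v_i$ (for $i\le k$) or away from $v_i$ (for $i\ge k+1$). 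After these $2k$ inversions the first $2k+1$ vertices induce $T'$ and every other vertex has $k$ in- and $k$ out-neighbours among them, so the result is $k$-strong by Proposition~\ref{lem:kstrong+}.
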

The lower bound is obtained by a counting argument.

We also prove that $M_1=M'_1=1$ and $M_2=M'_2=2$ showing that the upper bound is not tight for $k=1,2$. 
We also prove a better upper for $M'_k$ for large values of $k$.

\begin{restatable}{theorem}{borneMprime}\label{thm:upperM'}
$M'_k \leq \frac{4}{3}k+o(k)$. 
\end{restatable}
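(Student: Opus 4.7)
Plan: Given a tournament $T$ on $n \geq 2k+1$ vertices, I would construct a $k$-arc-strengthening family of size at most $\tfrac{4}{3}k + o(k)$ by first isolating the vertices whose single-vertex cuts are violated and then correcting them with well-chosen inversions.

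Let $L^- := \{v \in V(T) : d^-_T(v) < k\}$ and $L^+ := \{v \in V(T) : d^+_T(v) < k\}$. A standard counting argument inside the subtournament $T[L^-]$---its number of arcs is $\binom{|L^-|}{2}$, yet each of its vertices has in-degree at most $k-1$---gives $|L^-| \leq 2k - 1$, and symmetrically $|L^+| \leq 2k - 1$. Any $k$-arc-strong digraph has $d^{\pm} \geq k$ at every vertex, so the inversion family must correct every deficit in $L := L^- \cup L^+$, where $|L| \leq 4k - 2$.

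The core of the argument is to construct $\tfrac{4}{3}k + o(k)$ inversions that simultaneously repair all deficits in $L$. The main idea is to exploit inversions of transitive subtournaments: inverting a transitive subtournament on $v_1, \dots, v_s$ (with $v_i \to v_j$ for $i < j$) reverses all internal arcs, boosting the in-degree of $v_1$ by $s-1$ and the out-degree of $v_s$ by $s-1$ while producing symmetric intermediate changes. By pairing vertices of $L^-$ as sources and vertices of $L^+$ as sinks inside carefully chosen transitive triples or quadruples, a single inversion can correct several deficient vertices at once; a packing argument covering roughly three vertices per inversion matches the target ratio $4k/3$. One must then verify that the resulting tournament is $k$-arc-strong also at all non-trivial cuts $(V_1, V_2)$ with $\min(|V_1|, |V_2|) \geq 2$, using the density of tournaments and possibly extending inversion sets with auxiliary vertices outside $L$ to control side-effects at larger cuts.

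The main obstacle is achieving the constant $\tfrac{4}{3}$ instead of the naive $2k$ bound that one gets by pairing one problematic vertex per inversion. This requires a delicate combinatorial packing---likely a linear-programming-style duality---showing that the deficits in $L$ can always be covered efficiently by a small family of transitive subtournament inversions. A secondary difficulty is ensuring that the inversions do not destructively interact at cuts outside $L$; this can be addressed by a linear-algebraic reformulation that assigns each vertex a vector in $\{0,1\}^t$ (where $t$ is the number of inversions) and reduces the joint effect to a constraint over $\mathrm{GF}(2)$ that can be solved constructively.
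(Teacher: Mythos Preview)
Your proposal is not a proof but a sketch, and the central steps are not carried out. The ``packing argument covering roughly three vertices per inversion'' is asserted, not proved: you give no construction of the transitive triples or quadruples, no reason why they can always be found inside $T$, and no argument that each such inversion actually removes three deficits rather than creating new ones elsewhere in $L$. Inverting a transitive subtournament on $\{v_1,\dots,v_s\}$ changes the in- and out-degrees of \emph{all} its vertices, so a vertex that was fine can become deficient; controlling this interaction is the whole difficulty, and your sketch does not address it. The second obstacle you flag---non-singleton cuts---is genuinely serious for $n>2k+1$ and is not handled by the vague reference to a $\mathrm{GF}(2)$ reformulation.

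The paper's proof takes a completely different route. It first reduces to the case $n=2k+1$, where a tournament is $k$-arc-strong if and only if it is eulerian (Proposition~\ref{eul}); this converts the problem into driving all defects $|k-d^-(v)|$ to zero. The argument then applies $\lceil\log^2 k\rceil$ \emph{random} inversions to make every defect at most $O(\sqrt{k}\log k)$ with positive probability, groups roughly $\tfrac{2}{3}k$ of the vertices into blocks of size $\lceil\tfrac14\log k\rceil$, and spends one carefully designed inversion per block to zero out all defects in that block. The remaining $\approx\tfrac{4}{3}k$ vertices are then fixed by brute force (one inversion each, via Proposition~\ref{fuzzu}), and an orientation extension (Proposition~\ref{extend}) certifies that the resulting eulerian tournament exists. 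The constant $\tfrac{4}{3}$ thus arises from the split $2k+1 = \tfrac{2}{3}k + \tfrac{4}{3}k + 1$, not from packing three deficits per inversion. Your outline shares neither the eulerian reduction, the probabilistic preprocessing, nor the block-repair mechanism, and without a concrete substitute for these it does not establish the bound.
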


However, we believe that this bound, as well as the others, is not tight.
\begin{problem}\label{rsedth}
Find better bounds on $M_k$ and $M'_k$.
\end{problem}

We further study the question of how the parameters $\sinv_k$ and $\sinv'_k$  behave when fixing $k$ and considering tournaments whose size is significantly larger than $2k+1$.  As a first result, we  prove that every sufficiently large tournament can be made $k$-strong (and thus also $k$-arc-strong) in one inversion.

\begin{restatable}{theorem}{sone}\label{nk1}
Let $n$ and $k$ be positive integers with $n \geq 19k-2$. If $T$ is a tournament on $n$ vertices, then $\sinv'_k(T) \leq \sinv_k(T) \leq 1$.
\end{restatable}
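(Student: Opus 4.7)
The plan is to construct a single inversion set $X \subseteq V(T)$ such that $T' := \Inv(T;X)$ is $k$-strong. If $T$ is already $k$-strong, take $X = \emptyset$; otherwise $T$ admits a \emph{bad cut}, that is, a triple $(A, S, B)$ with $|S| \leq k-1$, $(A,B)$ a non-trivial partition of $V(T)\setminus S$, and all arcs between $A$ and $B$ directed from $B$ to $A$. We choose an extremal bad cut (for instance, one minimizing $|A|$, which forces $T\langle A\rangle$ to be strong) and define $X$ from this cut. A natural candidate is $X = A \cup B_0$, where $B_0$ is an appropriate subset of $B$ of size roughly $\Theta(k)$, taken, say, as an interval in a median ordering of $T\langle B\rangle$.

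The point of this choice is that inverting $X$ reverses exactly the arcs inside $A \cup B_0$; in particular, all arcs between $A$ and $B_0$, which went from $B_0$ to $A$ in $T$, now go from $A$ to $B_0$, creating the desired flow from $A$ to $B$. The arcs between $A$ and $B\setminus B_0$ remain directed from $B\setminus B_0$ to $A$, the arcs between $B_0$ and $B\setminus B_0$ and those incident to $S$ are unchanged, and inside $A$ and inside $B_0$ the tournaments are simply reversed.

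The heart of the proof is the verification that $T'$ is $k$-strong. By the general analysis of the inversion operation, a bad cut $(A', S', B')$ of $T'$ with $|S'|\leq k-1$ corresponds to a 5-partition $V(T) = A_1 \sqcup A_2 \sqcup S' \sqcup B_1 \sqcup B_2$ with $X \cap (V(T)\setminus S') = A_1 \cup B_1$ and, in $T$: $A_1 \Rightarrow B_1$, $B_1 \Rightarrow A_2$, and $B_2 \Rightarrow A_1 \cup A_2$. A case analysis on how $(A_1, A_2, B_1, B_2)$ meets the blocks $(A, B_0, B\setminus B_0, S)$, combined with the extremal choice of $(A,S,B)$ and lower bounds of order $\Theta(k)$ on each block, should rule out every such configuration. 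The bound $n \geq 19k-2$ should then emerge as the sum of the minimum sizes of $|A|$, $|B_0|$, $|B\setminus B_0|$, $|S|$ and $|S'|$ needed to make every case go through. The main obstacle will be precisely this bookkeeping: the most delicate sub-case is when $S'$ absorbs most of $B_0$ and $(A',B')$ splits $B\setminus B_0$ asymmetrically, and it is this situation which should dictate the constant $19$ in the bound.
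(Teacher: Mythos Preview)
Your proposal is not a proof but a plan, and the plan has a genuine gap: the ``case analysis'' that is supposed to rule out all bad cuts of $T'$ is never carried out, and there is good reason to doubt it can be with the choice $X = A \cup B_0$ you suggest. Consider the transitive tournament $TT_n$. The bad cut minimizing $|A|$ is $A=\{v_n\}$, $S=\emptyset$, $B=\{v_1,\dots,v_{n-1}\}$. If $B_0$ is any interval $\{v_j,\dots,v_{j+m-1}\}$ of $B$, one checks directly that in $T'=\Inv(T;\{v_n\}\cup B_0)$ some vertex has in- or out-degree at most~$1$: if $v_{n-1}\notin B_0$ then $d^+_{T'}(v_{n-1})=1$; if $B_0=\{v_j,\dots,v_{n-1}\}$ then $v_j$ is a sink; if $B_0=\{v_1,\dots,v_m\}$ then $d^-_{T'}(v_m)=1$. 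So for $k\ge 2$ no interval choice of $B_0$ works even in this simplest example. More generally, the block $A$ coming from an extremal bad cut has no lower bound on its size (it can be a single vertex), so your claim of ``lower bounds of order $\Theta(k)$ on each block'' is unfounded, and there is no visible mechanism by which the constant $19$ would emerge.

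The paper's argument is entirely different and does not start from a bad cut of $T$. It fixes a median order $(v_1,\dots,v_n)$ of the whole tournament, sets $B=\{v_1,\dots,v_{6k}\}$, $A=\{v_{n-6k+1},\dots,v_n\}$, $C=V(T)\setminus(A\cup B)$, and proves a key structural lemma (Lemma~\ref{6ab}): in any tournament on $A\cup B$ with $|A|=|B|=6k$ there is a set $X$ with $|X\cap A|=|X\cap B|=2k$ such that in $\Inv(T;X)$ every vertex of $A$ can reach $B$, and every vertex of $B$ can be reached from $A$, even after deleting any $k-1$ vertices. The set $X$ is built from median orders of $T\langle A\rangle$ and $T\langle B\rangle$ together with degree thresholds into the opposite side; the proof of $k$-strongness of $\Inv(T;X)$ then uses a reachability lemma for median orders (Lemma~\ref{lem:n-2F}) to route through $C$. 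The constant $19$ arises as $6+6+7$ (plus lower-order terms), not from any cut bookkeeping. If you want to push a bad-cut-based approach, you would need a far more global choice of $X$ than ``$A$ union an interval of $B$''.
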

In fact, it is
relatively straightforward to see that $\sinv_k(T) \leq 1$ for tournaments $T$ of even larger
order relative to $k$ (see Theorem~\ref{thm:s1}).

This leads us to the study of the functions
$m_k(n)= \max \{\sinv_k(T) \mid T~\mbox{tournament of order}~n\}$ and $m'_k(n)= \max \{\sinv'_k(T) \mid T~\mbox{tournament of order}~n\}$ for all $n\geq 2k+1$. We believe that $m_k$ and $m_k'$ have the following monotonic behaviour. 

\begin{restatable}{conjecture}{theconj}\label{conjM}
Let $k$ be a positive integer.
\begin{enumerate}
  \item[(i)] $m_k$ and $m'_k$ are non-increasing mappings.
  \item[(ii)] $M_k =  m_k(2k+1)$ and $M'_k =m'_k(2k+1)$.
\end{enumerate}
\end{restatable}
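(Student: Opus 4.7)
Part (ii) is a direct consequence of part (i): since $M_k = \sup\{m_k(n) : n \geq 2k+1\}$ and likewise for $M_k'$, the monotonicity from (i) immediately forces the supremum to be attained at $n = 2k+1$. So the real content is (i), and I will focus on the case of $m_k$; the argument for $m_k'$ is analogous, replacing ``$k$-strong'' by ``$k$-arc-strong'' everywhere. What must be shown is that for every $n \geq 2k+1$, every tournament $T$ on $n+1$ vertices satisfies $\sinv_k(T) \leq m_k(n)$.

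The natural approach is a vertex-deletion argument. I would establish the following lemma: if $v \in V(T)$ satisfies $d^+_T(v) \geq k$ and $d^-_T(v) \geq k$, and $T - v$ is $k$-strong, then $T$ itself is $k$-strong. This is routine: for $S \subseteq V(T)$ with $|S| < k$, one splits on whether $v \in S$ (reducing to $k$-strongness of $T-v$ applied to the set $S \setminus \{v\}$ of size $< k-1$) or $v \notin S$ (using that $(T-v) - S$ is strong and that $v$ retains at least one in- and one out-neighbor outside $S \cup \{v\}$, so can be inserted into the strong digraph $(T-v)-S$). Given this lemma, if such a vertex $v$ exists, then applying an optimal $k$-strengthening family of $T - v$, viewed as subsets of $V(T)$, shows that $\sinv_k(T) \leq \sinv_k(T-v) \leq m_k(n)$, as desired.

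It therefore remains to show existence of $v$ with $\min(d^+_T(v), d^-_T(v)) \geq k$. A counting argument handles the case of large $n$: set $A = \{v : d^-_T(v) < k\}$ and $B = \{v : d^+_T(v) < k\}$. Since $d^+(v) + d^-(v) = n \geq 2k+1$, the sets $A$ and $B$ are disjoint. Each vertex of $A$ has at most $k-1$ in-neighbors, so counting arcs inside $A$ gives $\binom{|A|}{2} \leq (k-1)|A|$, hence $|A| \leq 2k-1$, and symmetrically $|B| \leq 2k - 1$. Thus if $n + 1 > 4k - 2$, that is, $n \geq 4k - 2$, some vertex lies outside $A \cup B$ and the deletion strategy succeeds.

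The genuine obstacle is the ``short'' range $2k+1 \leq n \leq 4k-3$, where it is entirely possible that every vertex of $T$ has in-degree $< k$ or out-degree $< k$ (e.g.\ near-transitive tournaments). Here one cannot invoke the lemma as a black box: if $v$ is chosen with, say, $d^+_T(v) < k$, then at least one inversion of the strengthening family must contain $v$ together with enough of $N^-_T(v)$ to lift its out-degree past $k$. I would attempt to prove a strengthened form of the lemma in which one is allowed to \emph{augment} an optimal $k$-strengthening family of $T - v$ by enlarging one of its inversions to include $v$, so that no new inversion is spent. This requires showing that an optimal family of $T - v$ can always be chosen to contain an inversion whose restriction to $N^-_T(v)$ flips sufficiently many non-neighbors into out-neighbors of $v$ without breaking the $k$-strongness gained on $T - v$. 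Controlling the structure of optimal strengthening families is precisely what makes the problem delicate, which is why the statement is left open as a conjecture rather than a theorem.
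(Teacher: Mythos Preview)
Your analysis is correct and matches the paper's exactly, but note that the statement is a \emph{conjecture}, not a theorem: the paper does not prove it either. The paper only establishes the partial result that $m_k$ and $m'_k$ are non-increasing for $n > 4k-2$, via precisely the vertex-deletion argument you describe (their Proposition~\ref{4k-2} gives the balanced vertex, their Proposition~\ref{lem:kstrong+} gives the extension lemma), and then explicitly states that the range $2k+1 \leq n \leq 4k-2$ remains open. You correctly derive (ii) from (i), correctly prove the deletion step for $n+1 \geq 4k-1$, and correctly isolate the obstruction for smaller $n$; your final paragraph sketching an ``augment one inversion'' attack goes beyond anything the paper attempts, but is of course not a proof.
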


Note  that (i) implies (ii).
This conjecture is motivated by the fact that one can easily prove that $m_k$ and $m'_k$ are non-increasing for $n>4k-2$. See Section~\ref{conclusion}.
Therefore, in order to approach Problem~\ref{rsedth}, 
it is sufficient to consider tournaments whose order is in the range from $2k+1$ to $4k-2$.

\medskip

Theorem~\ref{nk1} implies that, for every pair of positive integers $k$ and $i$, there is a smallest integer $N_k(i)$ such that
$m_k(n) \leq i$ for all $n\geq N_k(i)$.
Similarly, for every pair of positive integers $k$ and $i$, there is a smallest integer $N'_k(i)$ such that
$m'_k(n) \leq i$ for all $n\geq N'_k(i)$.
Since $m'_k(n) \leq m_k(n)$ for all $k$ and $n$, we have $N'_k(i) \leq N_k(i)$ for all $k$ and $i$.
It is natural to ask the following questions.
\begin{problem}\label{probnk}
Let $k,i,n$ be positive integers, and let $T$ be a tournament on $n$ vertices.

What is the minimum integer $N_k(i)$ such that $\sinv_k(T) \leq i$ if $n \geq N_k(i)$? 

What is the minimum integer $N'_k(i)$ such that $\sinv'_k(T) \leq i$ if $n \geq N'_k(i)$? 
\end{problem}

Theorem~\ref{nk1} states $N'_k(1)\leq N_k(1)\leq 19k-2$.
Since the transitive tournament on $3k-1$ vertices satisfies $\sinv'_k(TT_{3k+1})=\sinv_k(TT_{3k+1})=2$ (Theorem~\ref{thm:TTn}), we have $ N_k(1) \geq N'_k(1) \geq 3k$. We further establish the following better lower bounds for $N_k(1)$ and $N_k'(1)$ by giving simple examples.
\begin{restatable}{proposition}{nkunter}\label{nk1unter}
    For any positive integer $k$, we have $N_k(1)\geq 5k-2$ and $N'_k(1)\geq 4k-2$.
\end{restatable}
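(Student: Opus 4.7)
The plan is to exhibit two explicit tournaments, $T_1$ on $4k-3$ vertices and $T_2$ on $5k-3$ vertices, witnessing $\sinv'_k(T_1)\geq 2$ and $\sinv_k(T_2)\geq 2$ respectively. The case $k=1$ is immediate, since then $4k-2=2$ and $5k-2=3$ and no tournament on fewer than three vertices is $1$-strong; so I focus on $k\geq 2$.

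For $T_1$, take $V(T_1)=A\cup B$ with $|A|=2k-1$ and $|B|=2k-2$, orient $A$ as a regular tournament, orient $B$ transitively as $b_1\to b_2\to\cdots\to b_{2k-2}$, and direct every arc between $A$ and $B$ from $A$ to $B$. A short degree count gives $d^-(a)=k-1$ for all $a\in A$ and $d^+(b_j)=2k-2-j\leq k-1$ for every $j\geq k-1$, so any inversion $X$ making $\Inv(T_1;X)$ $k$-arc-strong must contain $S:=A\cup\{b_{k-1},\ldots,b_{2k-2}\}$; in particular $|V\setminus X|\leq|V\setminus S|=k-2$. The key step is to examine $b_{2k-2}$: since it has out-degree $0$ in $T_1$, every in-arc with other endpoint in $X$ gets flipped, so its in-neighbours in $\Inv(T_1;X)$ are confined to $V\setminus X$. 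Hence $d^-_{\Inv(T_1;X)}(b_{2k-2})\leq k-2<k$, contradicting $k$-arc-strength.

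For $T_2$, take $V(T_2)=S^+\cup S^-\cup W$ with $|S^+|=|S^-|=2k-1$ and $|W|=k-1$; orient $S^+$ and $S^-$ as regular tournaments (possible since $2k-1$ is odd), orient $W$ arbitrarily, and add every arc $S^-\to S^+$, $S^-\to W$, and $W\to S^+$. The same degree argument forces any single inversion $X$ producing a $k$-arc-strong digraph to satisfy $X\supseteq S^+\cup S^-$. Now take the candidate separator $C:=W$, of size exactly $k-1$. Since $S^+\cup S^-\subseteq X$, every arc of $T_2$ with both endpoints in $S^+\cup S^-$ is reversed by the inversion, so $\Inv(T_2;X)-C$ is exactly the reverse of $T_2[S^+\cup S^-]$. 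In that reverse every arc between $S^-$ and $S^+$ points from $S^+$ to $S^-$, so no path leaves $S^-$ to reach $S^+$, and $\Inv(T_2;X)-C$ is not strong. Consequently $\Inv(T_2;X)$ is not $k$-strong, which proves $\sinv_k(T_2)\geq 2$.

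The main technical care lies in the degree bookkeeping for vertices of $S$ after inversion; once the observation about $b_{2k-2}$ (respectively about the reverse of $T_2[S^+\cup S^-]$) is identified, the rest is routine. The tournaments $T_1$ and $T_2$ are $k$-arc-strengthenable and $k$-strengthenable respectively, since they have at least $2k+1$ vertices and their underlying complete graphs are clearly $2k$-edge-connected, as recalled in the introduction.
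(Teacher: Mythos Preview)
Your proof is correct. For the bound $N_k(1)\geq 5k-2$, your tournament $T_2$ (with $S^-,W,S^+$ playing the roles of $A,B,C$) is precisely the paper's construction, and the argument that $S^+\cup S^-\subseteq X$ forces $\Inv(T_2;X)-W$ to be the reverse of $T_2\langle S^+\cup S^-\rangle$ is the same.

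For $N'_k(1)\geq 4k-2$ you take a genuinely different route. The paper uses two eulerian $(2k-1)$-tournaments with $A\Rightarrow B$ on $4k-2$ vertices: since every vertex of $A$ has in-degree $k-1$ and every vertex of $B$ has out-degree $k-1$, any single inversion set $X$ must be all of $V$, so the inversion does nothing up to isomorphism. This argument is shorter and more symmetric, and in fact yields the slightly stronger conclusion $N'_k(1)\geq 4k-1$. Your construction on $4k-3$ vertices (regular $(2k-1)$-tournament dominating a transitive $(2k-2)$-tournament) instead pins down the sink $b_{2k-2}$: after forcing $A\cup\{b_{k-1},\dots,b_{2k-2}\}\subseteq X$, its in-neighbourhood collapses into the at most $k-2$ remaining vertices. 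This is a perfectly valid alternative that proves exactly the stated bound; the trade-off is a bit more bookkeeping in exchange for a smaller witnessing tournament.
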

We further show that a significantly smaller tournament can still be dealt by three inversions.

\begin{restatable}{theorem}{nkthree}\label{nk3}
    For any positive integer $k$, we have $N'_k(3)\leq N_k(3)\leq 11k-2$.
\end{restatable}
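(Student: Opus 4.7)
The plan is as follows. The first inequality $N'_k(3) \leq N_k(3)$ is immediate from the observation that $\sinv'_k(D) \leq \sinv_k(D)$ for every $k$-strengthenable digraph, since every $k$-strong digraph is also $k$-arc-strong. So the substantive task is to show that every tournament $T$ on $n \geq 11k-2$ vertices satisfies $\sinv_k(T) \leq 3$. Since we already know from (the forthcoming) monotonicity results that $m_k$ is non-increasing for $n > 4k-2$, and since Theorem~\ref{nk1} handles $n \geq 19k-2$, it is enough to treat the window $11k-2 \leq n \leq 19k-3$.

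I would start from a median ordering $v_1, \ldots, v_n$ of $T$, i.e.\ one maximising the number of forward arcs. Such orderings enjoy well-known local properties: for every $i \leq j$, $v_i$ has at least as many out-neighbours as in-neighbours among $\{v_{i+1}, \ldots, v_j\}$, and symmetrically $v_j$ has at least as many in-neighbours as out-neighbours among $\{v_i, \ldots, v_{j-1}\}$. Consequently, the ``interior'' vertices have essentially balanced in- and out-degrees, and the only vertices that can obstruct $k$-strongness are those near the two ends of the ordering. The natural ansatz, which mirrors and weakens the one-inversion construction behind Theorem~\ref{nk1}, is to choose three subsets of $V(T)$ of the form: $X_1$ a set pairing a block of roughly the first $\Theta(k)$ vertices with a block of roughly the last $\Theta(k)$ vertices, so that inverting $X_1$ yields many back-arcs from the suffix to the prefix; $X_2$ a set pairing the low-in-degree vertices of the prefix with a central block of size at least $k$; and $X_3$ the symmetric construction for the low-out-degree vertices of the suffix. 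The value $11k-2$ should appear precisely as the sum of the sizes of the blocks needed to make the construction work.

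After fixing the three sets, one verifies that the resulting tournament $T' = \Inv(T; X_1, X_2, X_3)$ is $k$-strong, i.e.\ that for every $S \subseteq V(T)$ with $|S| \leq k-1$, and every ordered pair $(u, v)$ of vertices outside $S$, there is a $uv$-dipath in $T'-S$. This is done by a case analysis on the positions of $u$ and $v$ relative to the median ordering and to the three inverted sets; in each case one exhibits an explicit short dipath using the reversed arcs produced by the $X_i$ together with the structural guarantees of the median ordering. The main obstacle is this verification: with only three inversions at our disposal, every possible cut $(V_1, V_2)$ of $T'-S$ must be crossed by a reversed arc in both directions, even after an adversary deletes up to $k-1$ vertices. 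The bound $11k-2$ has to be chosen tightly enough that each relevant block still has at least $k$ vertices after the deletion of $S$, yet loosely enough that the three inverted sets jointly cross every cut; balancing these two constraints, and handling in particular the cuts that isolate a handful of ``wrong-degree'' vertices, is the delicate combinatorial core of the proof.
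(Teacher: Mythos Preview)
Your high-level framework (median order, partition into prefix/middle/suffix blocks, and the arithmetic $11k-2 = 4k+4k+(3k-2)$) matches the paper, but the way you intend to use the three inversions is structurally different and, as stated, has a real gap.

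The paper does \emph{not} assign each of the three inversions a separate repair job (prefix--suffix, prefix--centre, suffix--centre). Instead, with $B=\{v_1,\dots,v_{4k}\}$ and $A=\{v_{n-4k+1},\dots,v_n\}$, it picks $X_1\subseteq A$, $X_2\subseteq B$ (each of size $2k$, defined from the median orders of $T\langle A\rangle$ and $T\langle B\rangle$), and takes $X_3 = X_1\cup X_2$. The algebraic point is that $\Inv(T;X_1,X_2,X_1\cup X_2)$ reverses precisely the arcs between $X_1$ and $X_2$ and nothing else: in particular $T'\langle A\rangle=T\langle A\rangle$, $T'\langle B\rangle=T\langle B\rangle$, and every arc touching $C$ is untouched. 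This preservation of the internal median-order structure is what makes the verification go through. Your three sets, by contrast, each straddle two regions, so inverting them would scramble the median order inside the prefix, suffix, and centre; after that you have no control over $T'\langle\text{prefix}\rangle$ or $T'\langle\text{suffix}\rangle$, and the ``case analysis on positions of $u,v$'' you defer to has nothing to grip on.

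The verification itself is also not a raw case split on short paths. The paper uses a quantitative reachability lemma for median orders (Lemma~\ref{lem:n-2F}: from the first vertex, after deleting any set $F$, one still reaches at least $n-2|F|$ vertices). Applied inside $T'\langle B\cup C\rangle$ and $T'\langle A\cup C\rangle$ (both unchanged by the inversions), this forces $R^+_{T'-F}(u)\cap C$ and $R^-_{T'-F}(w)\cap C$ to overlap in the middle block, via the count $|C|-3|F|\geq (3k-2)-3(k-1)=1$. That inequality is the exact origin of the constant $11k-2$; your sketch gives no mechanism to produce such overlap once the median order in the blocks has been destroyed.
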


Finally, using probabilistic methods, we prove that every tournament that is a constant factor bigger than $2k$ can be made $k$-strong by a constant number of inversions. More precisely, we prove the following result.
\begin{restatable}{theorem}{pluseps}\label{thm:2+eps}
    There exists a function $f\colon \mathbb{R}_{>0} \to \mathbb{N}$ such that for every $\epsilon>0$ and every positive integer $k$, 
    if $T$ is an $n$-vertex tournament with $n \geq 2k+1 +\epsilon k$, then $\sinv_k(T) \leq f(\epsilon)$.
\end{restatable}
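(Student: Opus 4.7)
The plan combines a reduction to Theorem~\ref{nk1} with a probabilistic construction. By Theorem~\ref{nk1}, $\sinv_k(T) \leq 1$ whenever $n \geq 19k-2$, so it suffices to handle tournaments with $(2+\epsilon)k+1 \leq n \leq 19k-3$. In this range $n = \Theta_\epsilon(k)$, and we may further restrict to $k$ larger than some threshold $k_0(\epsilon)$, since the finitely many tournaments with smaller $k$ (having at most $19 k_0(\epsilon)$ vertices) contribute only a bounded constant $c_0(\epsilon)$ that can be absorbed into $f(\epsilon)$.

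For $T$ in the remaining range I would fix a constant $r = r(\epsilon)$ and sample a labeling $\phi \colon V(T) \to \{0,1\}^r \setminus \{0\}$ uniformly and independently at each vertex. This corresponds to the $r$ inversions $X_i = \{v : \phi(v)_i = 1\}$; in $T' = \Inv(T; X_1, \ldots, X_r)$ an arc $uv$ of $T$ is flipped iff $\langle \phi(u), \phi(v)\rangle$ is odd. To prove $\sinv_k(T) \leq r$ it is enough to show $\Pr[T' \text{ not $k$-strong}] < 1$. Using the standard characterization that $T'$ is $k$-strong iff $|N^+_{T'}(A) \setminus A| \geq \min(k, |V\setminus A|)$ for every nonempty proper $A \subsetneq V$, I would then union bound over $A$, split by $|A|$.

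For the cases that reduce to the minimum out- or in-degree conditions (namely $|A| = 1$ and, symmetrically, $|V\setminus A| \leq k$, since the latter forces some $b$ to have $d^-_{T'}(b) < k$), note that conditioned on $\phi(v) \neq 0$ the out-degree $d^+_{T'}(v)$ is a sum of $n-1$ independent Bernoulli variables with parameter $\tfrac 12 + O(2^{-r})$, and hence has mean at least $(1+\epsilon/2)k - O(2^{-r} k)$. Hoeffding then yields $\Pr[d^+_{T'}(v) < k] \leq e^{-c\epsilon^2 k}$ once $r$ is large enough, and the symmetric bound holds for $d^-_{T'}(v)$; a union bound over the at most $19k$ vertices is $o(1)$ for $k \geq k_0(\epsilon)$. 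For the remaining bulk cases $2 \leq |A|$ with $|V\setminus A| \geq k+1$, I would condition on the matrix $M_A \in \mathbb{F}_2^{|A|\times r}$ whose rows are the vectors $\phi(a)$ for $a \in A$, and let $\rho = \rk(M_A)$. Each $b \in V\setminus A$ fails to lie in $N^+_{T'}(A)$ with probability at most $2^{r-\rho}/(2^r-1)$, and these events are independent across $b$ given $M_A$; applying the Chernoff tail $\Pr[\Bin(N,p) \geq t] \leq (Np\cdot e/t)^t$ with $N = |V\setminus A|$, $p \leq 2^{-\rho}(1+o(1))$, and $t = |V\setminus A|-k+1$, would give a per-$A$ bound decaying as $2^{-\Omega_\epsilon(k)}$.

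The main obstacle is the bulk regime where $|A|$ is close to $n/2$: here $\binom{n}{|A|}$ can be as large as $2^n \leq 2^{19k}$, so the Chernoff rate per $A$ must beat $19\ln 2$. This forces $r$ to be of order $\Omega(1/\epsilon)$ so that the expected number of blocked vertices $\mathbb{E}[Z_A] \leq n\cdot 2^{-r}$ is pushed well below the threshold $|V\setminus A| - k + 1 \geq \epsilon k/2$, and the sharper $\left(\mu e/t\right)^t$ form of Chernoff (rather than the standard subgaussian form) is required. Once $r = r(\epsilon)$ is fixed sufficiently large (on the order of $1/\epsilon$), every contribution in the union bound is $o(1)$, some $\phi$ yields a $k$-strong $T'$, and $f(\epsilon) := r(\epsilon) + c_0(\epsilon)$ satisfies the theorem.
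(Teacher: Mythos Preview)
Your overall plan---reduce to $n\le Ck$ via Theorem~\ref{nk1}, absorb small $k$ into the constant, label vertices by random nonzero vectors in $\mathbb{F}_2^r$, and show the resulting $T'$ is $k$-strong with positive probability---matches the paper exactly. The gap is in the analysis: the direct union bound over all nonempty proper $A\subsetneq V$ does not close.

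Consider the regime $|V\setminus A|=k+s$ with $s$ a small constant (say $s=1$). Your threshold is $t=|V\setminus A|-k+1=s+1$, and even with full rank $\rho=r$ your Chernoff tail gives at best
\[
\Pr[Z_A\ge t]\le\Bigl(\tfrac{\mu e}{t}\Bigr)^{t}\;=\;O\bigl(k^{s+1}2^{-r(s+1)}\bigr),
\]
which is $\mathrm{poly}(k)\cdot 2^{-O(r)}$. But the number of such sets is $\binom{n}{k+s}=2^{\Theta(k)}$ (already for $n=(2+\epsilon)k$ this is $\ge 2^{ck}$ for some $c>0$). The union bound therefore needs $r=\Theta(k)$, not a constant depending only on $\epsilon$. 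Your identification of the ``main obstacle'' as $|A|\approx n/2$ is off: the binding regime is $|V\setminus A|$ barely above $k$, where the deviation you require is tiny (a constant) but the number of sets is still exponential in $k$.

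The paper avoids this by \emph{not} union bounding over $A$. Instead, it argues that if $T'$ is not $k$-strong then some pair $u,v$ with $\vec u\ne\vec v$ lies on opposite sides of a separator of size $\le k-1$, and then builds $k$ internally vertex-disjoint $(u,v)$-paths of length $2$ and $3$ through $N^+_{T'}(u)\cap N^-_{T'}(v)$ and via a large directed matching from $N^+_{T'}(u)\cap N^+_{T'}(v)$ to $N^-_{T'}(u)\cap N^-_{T'}(v)$. The resulting bad events are: some label is shared by all but $<k$ vertices (union bound over $2^r-1$ labels), some pair has a small common neighbourhood (union bound over $O(n^2)$ pairs), or some pair of sets of size $\ge(1+\epsilon/4)k/2$ has no large matching. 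Only this last event involves exponentially many objects, but there the per-pair probability is $2^{-\Omega(\epsilon r k)}$ via a rank argument (their Lemma~\ref{lemma:proba_3}), which \emph{does} scale with $k$ and beats $2^{2n}\le 2^{2Ck}$ once $r=O(C/\epsilon)$. Your argument is missing this structural step that trades the raw union bound over cuts for a union bound over vertex pairs plus a matching event with the right exponent.
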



\medskip

The fact that $m_k(n)=1$ for $n$ sufficiently large implies that the set ${\cal F}_k$ of tournaments $T$ such that $\sinv_k(T) >1$ is finite.
This implies that computing $\sinv_k$ and $\sinv'_k$ for fixed $k$ can be done in polynomial time for tournaments.

\begin{corollary}\label{cor:sinv_k-poly}
Let $k$ be a positive integer. We can compute $\sinv_k(T)$ (resp. $\sinv'_k(T)$) for a given tournament $T$ on $n$ vertices in $O(n^{7/2})$ time (resp. $O(n^2)$ time).
\end{corollary}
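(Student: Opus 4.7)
The plan is to reduce the computation to a single connectivity test for large tournaments and to a brute force of bounded size for small ones, leveraging Theorem~\ref{nk1}.

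By Theorem~\ref{nk1}, every tournament $T$ on at least $19k-2$ vertices satisfies $\sinv_k(T)\leq 1$, hence also $\sinv'_k(T)\leq 1$. Since $k$ is fixed, this means that the family of tournaments for which $\sinv_k$ or $\sinv'_k$ exceeds $1$ has order bounded by the constant $19k-3$. Given an input tournament $T$ on $n$ vertices, the algorithm proceeds in two cases. If $n<19k-2$, the tournament has constant size, so we enumerate all families of subsets of $V(T)$ of cardinality up to a trivial upper bound such as $\binom{n}{2}$, apply each, and test the resulting digraph for $k$-strength (resp.\ $k$-arc-strength); we return the least cardinality among families that succeed. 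This runs in time depending only on $k$. If $n\geq 19k-2$, then $\sinv_k(T),\sinv'_k(T)\in\{0,1\}$, so it suffices to test whether $T$ is already $k$-strong (resp.\ $k$-arc-strong), returning $0$ or $1$ accordingly.

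It remains to justify the complexity of the connectivity tests in the large-$n$ case, for fixed $k$. Testing $k$-arc-strength of a tournament with $\Theta(n^2)$ arcs reduces, for fixed $k$, to a constant number of max-flow computations of value at most $k$ on a unit-capacity digraph, each of which can be performed in $O(n^2)$ time by standard techniques, yielding the $O(n^2)$ bound for $\sinv'_k$. Testing $k$-vertex-connectivity is handled analogously by combining the classical vertex-splitting construction with the $O(n^{1/2}m)$ unit-capacity max-flow algorithm of Even and Tarjan, which gives the announced $O(n^{7/2})$ bound for $\sinv_k$. The main subtlety lies not in the overall structure of the algorithm, which is immediate from Theorem~\ref{nk1}, but in selecting the appropriate off-the-shelf connectivity algorithms to match the stated running times.
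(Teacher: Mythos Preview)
Your approach matches the paper's: split on $n<19k-2$ versus $n\ge 19k-2$ via Theorem~\ref{nk1}, brute-force the former in constant time, and reduce the latter to a single connectivity test. The $O(n^{7/2})$ justification for $\sinv_k$ via vertex-splitting plus the Even--Tarjan $O(n^{1/2}m)$ unit-capacity flow bound is also essentially what the paper invokes.

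There is, however, a gap in your justification of the $O(n^2)$ bound for $\sinv'_k$. Verifying $k$-arc-strength does \emph{not} reduce to a constant number of max-flow computations: the standard reduction fixes a root $r$ and checks that the $r$--$v$ max flow is at least $k$ for every $v$, which is $n-1$ computations. Each of these costs $O(km)=O(kn^2)$ on a tournament, so your argument only yields $O(n^3)$ for fixed $k$. To reach $O(n^2)$ one needs a dedicated algorithm; the paper cites Mansour--Schieber (flow-based) and Gabow (matroid/arborescence-packing based), either of which delivers the stated bound. So the structure of your algorithm is correct, but the specific running-time claim for the arc-strong case is not supported by the argument you gave.
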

\begin{proof}
We first check in constant time, whether $T$ is in ${\cal F}_k$. If yes, then we can return $\sinv_k(T)$ and $\sinv'_k(T)$ that may be stored in some precomputed table.
If not, then $\sinv'_k(T) \leq \sinv_k(T) \leq 1$. We then check in $O(n^{7/2})$ time using algorithms due to Even and Tarjan~\cite{EvenTarjan75} or Galil~\cite{Galil80} based on flow (resp. $O(n^2)$ time using Mansour and  Schieber's~\cite{MANSOUR198976} algorithm based on flow or Gabow's algorithm~\cite{Gabow91} based on matroids) whether $T$ is $k$-strong (resp. $k$-arc-strong) to determine whether $\sinv_k(T)$ (resp. $\sinv'_k(T)$) is equal to $0$ or $1$. 
\end{proof}


This article is structured as follows. We give some notation and a collection of preliminary results in Section~\ref{prel}. In Section~\ref{sec:oriented}, we prove Theorem~\ref{thm:extrem}. In Section~\ref{sec:complexity}, we prove the complexity results, namely Theorems~\ref{archard1} to~\ref{thm:inapprox-vertex}. The results on tournaments are contained in Section~\ref{sec:M}, in which we prove Theorem~\ref{thm:exttournoi}, and in Section~\ref{sec:upper_bound_Mkn}, in which we prove Theorems~\ref{thm:s1},~\ref{nk1},~\ref{nk3} and~\ref{thm:2+eps} and Proposition~\ref{nk1unter}.

\section{Preliminaries}\label{prel}

In Section~\ref{not}, we introduce some formal notation and in Section~\ref{prelres}, we give a collection of auxiliary results.

\subsection{Notation}\label{not}

 A {\bf mixed graph} $G = (V, E, A)$ is a triple consisting of a set $V$ of elements called {\bf vertices}, a set $E$ of unordered pairs of vertices  called {\bf edges}, and a set $A$ of ordered pairs of vertices called {\bf arcs}.
Hence a multigraph can be seen as a mixed graph whose arc set is empty, while a digraph can be seen as a mixed graph whose edge set is empty and that does not contain parallel arcs.
Given a mixed graph $G$, its vertex set is denoted by $V(G)$, its edge set is denoted by $E(G)$ and its arc set by $A(G)$.
The {\bf underlying graph} $\UG(G)$ of a mixed graph $G$ is the undirected multigraph that we obtain by suppressing the orientations of the arcs (i.e. replacing each arc by an edge between its two endvertices). 
An {\bf orientation} of a mixed graph $G$ is a digraph obtained by giving an orientation to every edge, i.e.
replacing each edge by one of the two arcs between its endvertices.

Let $S,S'$ be disjoint sets of vertices in a mixed graph $G$. 
We denote by $\delta_G(S,S')$ the set of edges of $E(G)$ with exactly one endvertex in $S$ and one endvertex in $S'$. We use $\delta_G(S)$ for $\delta_G(S,V(G)\setminus S)$.
We also denote by $\delta^+_G(S,S')$ (resp. $\delta^-_G(S,S')$) the set of arcs in $A(G)$ with tail (resp. head) in $S$ and head (resp. tail) in $S'$. Again, we use $\delta^+_G(S)$ (resp. $\delta^-_G(S)$) for $\delta^+_G(S,V(G)\setminus S)$ (resp. $\delta^-_G(S,V(G)\setminus S)$).
The {\bf degree} (resp. {\bf out-degree}, {\bf in-degree}) of $S$ in $G$ is $d_G(S) = |\delta_G(S)|$ 
(resp.
$d^+_G(S) = |\delta^+_G(S)|$, $d^-_G(S) = |\delta^-_G(S)|$). We further use $d_G(S,S') = |\delta_G(S,S')|$, 
$d^+_G(S,S') = |\delta^+_G(S,S')|$, and 
$d^-_G(S,S') = |\delta^-_G(S,S')|$.
Let {$N_G(S)$ (resp. $N_G^+(S)$, $N_G^-(S)$)} denote the set of vertices in $V(G)\setminus S$ that are incident to at least one edge (arc) in $\delta_G(S)$ (resp. $\delta_G^+(S)$, $\delta_G^-(S)$).
The {\bf degree} (resp. {\bf out-degree}, {\bf in-degree}) of a vertex $v$ in $G$ is $d_G(v) = d_G(\{v\})$ (resp. 
$d^+_G(v) = d^+_G(\{v\})$, $d^-_G(v) = d^-_G(\{v\})$). 
For simplicity, when the mixed graph is clear from the context, the subscript $G$ is omitted.

\medskip

Let $D$ be a digraph.
A {\bf sink} (resp. {\bf source}) in a digraph is a vertex with out-degree $0$ (resp. in-degree $0$).
We say that $D$ is {\bf eulerian} if $d_D^-(v) = d_D^+(v)$ holds for every vertex $v$.
Let $A$ and $B$ be two disjoint sets in $D$.
If $ab$ is an arc for all pairs $(a,b)$ in $A\times B$, then we write $A\Ra B$.

Let $u,v$ be two distinct vertices in $D$. The {\bf strong-connectivity} from $u$ to $v$ in $D$, denoted by $\kappa_D(u,v)$, is the maximal number $\alpha$ such that $D-X$ contains a $(u,v)$-path  for every $X \subseteq V(D)\setminus \{u,v\}$ with $|X|\leq \alpha-1$. For some $S \subseteq V(D)$ and positive integer $k$, we say that $S$ is {\bf $k$-strong in $D$} if $\kappa_D(u,v)\geq k$ for all $u,v \in S$.

\subsection{Preliminary results}\label{prelres}
We use several times without explicitly mentioning it that for every positive integer $k$, there is a $k$-strong tournament on $2k+1$ vertices. One example for such a tournament is the socalled rotative tournament whose vertex set is $[2k+1]$ and where an arc is oriented from $i$ to $j$ if $i-j\in [k] ({\rm mod }~2k+1)$.

\medskip

We need the following simple result that allows us to extend a set which is $k$-strong in a digraph.
\begin{proposition}\label{lem:kstrong+}
Let $D$ be a digraph, let $S$ be a $k$-strong set in $D$ and let $v \in V(D)\setminus S$. If $v$ has at least $k$ in-neighbours in $S$ and at least $k$ out-neighbours in $S$, then $S \cup \{v\}$ is $k$-strong in $D$.
\end{proposition}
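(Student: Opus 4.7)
The plan is to unfold the definition of $k$-strong set and reduce everything to the $k$-strongness of $S$ by exploiting the $k$ in-neighbours and $k$ out-neighbours of $v$ in $S$ to route around any small deletion set.

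By definition, it suffices to show that $\kappa_D(x,y) \geq k$ for every pair of distinct vertices $x,y \in S \cup \{v\}$. If $x,y \in S$, this holds immediately because $S$ is $k$-strong in $D$. So the only new case is when exactly one of $x,y$ equals $v$. By the symmetric roles of in- and out-neighbours in the hypothesis, it is enough to treat the case $x = v$, $y \in S$; the case $x \in S$, $y = v$ is analogous using the $k$ in-neighbours of $v$ in $S$.

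Fix an arbitrary $X \subseteq V(D) \setminus \{v,y\}$ with $|X| \leq k-1$; I want to exhibit a $(v,y)$-path in $D - X$. If $y$ is an out-neighbour of $v$, the single arc $vy$ already lies in $D - X$ (since neither endvertex is in $X$), and we are done. Otherwise, set $N = N^+_D(v) \cap S$; by hypothesis $|N| \geq k > |X|$, so there exists $s \in N \setminus X$. Since $y \notin N$, we have $s \neq y$, so $X \subseteq V(D) \setminus \{s,y\}$ and $|X| \leq k-1$. The $k$-strongness of $S$ then gives an $(s,y)$-path $P$ in $D - X$, and prepending the arc $vs$ (present in $D - X$ because $v,s \notin X$) yields a $(v,y)$-path in $D - X$.

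There is no real obstacle here beyond a careful case split; the key point, and the reason the bound ``at least $k$'' in the hypothesis is exactly what is needed, is that a set of size at least $k$ cannot be wiped out by a deletion set of size at most $k-1$, which is what lets us always find an intermediate vertex $s \in S$ by which to enter (or exit) $v$ after passing through $X$'s removal.
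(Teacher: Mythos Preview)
Your proof is correct and follows essentially the same approach as the paper: reduce to the case where exactly one of the two vertices is $v$, then use the $k$ in- or out-neighbours of $v$ in $S$ to find a surviving neighbour after deleting $X$ and route through $S$ using its $k$-strongness. Your extra care in splitting off the sub-case $y\in N^+_D(v)$ (to guarantee $s\neq y$ before invoking $\kappa_D(s,y)\geq k$) is a minor refinement; the paper simply absorbs this into the trivial case of a zero-length path.
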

\begin{proof}
    Suppose for the sake of a contradiction that there is a set $X\subseteq V(D)$ of size at most $k-1$ and a pair $(s,t)$ of vertices in $(S \cup \{v\}) \setminus X$ such that there is no $(s,t)$-path in $D-X$.
    If $s,t \in S$, then this contradicts the fact that $S$ is $k$-strong in $D$.
    Thus exactly one of $s$ and $t$ is $v$. Without loss of generality, suppose $s \in S$ and $t=v$.
    Since $|N^-(v) \cap S| \geq k$, $v$ has an in-neighbour $t_0 \in S-X$, and as $S$ is $k$-strong in $D$, there is an $(s,t_0)$-path in $D-X$, and so there is an $(s,t)$-path in $D-X$, a contradiction.
\end{proof}
The well-known following result is helpful for applying Proposition~\ref{lem:kstrong+}. It follows directly from Proposition~2.2.2 of \cite{Tour-book}.
\begin{proposition}\label{4k-2}
    Let $T$ be a tournament on at least $4k-1$ vertices for some positive integer $k$. Then there exists some $v \in V(T)$ with $\min\{d_T^+(v),d_T^-(v)\}\geq k$.
\end{proposition}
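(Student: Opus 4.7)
The plan is to prove the contrapositive by a direct counting argument. Suppose for contradiction that every vertex $v \in V(T)$ satisfies $d_T^+(v) \leq k-1$ or $d_T^-(v) \leq k-1$. Let $A = \{v \in V(T) \mid d_T^+(v) \leq k-1\}$ and $B = \{v \in V(T) \mid d_T^-(v) \leq k-1\}$. Since $T$ is a tournament, $d_T^+(v)+d_T^-(v)=n-1\geq 4k-2$ for every vertex $v$, so no vertex can lie in both $A$ and $B$ (otherwise $d_T^+(v)+d_T^-(v) \leq 2k-2 < 4k-2$). Therefore $A$ and $B$ partition $V(T)$ and $|A|+|B|=n$.

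The main step is to bound $|A|$ (and symmetrically $|B|$). The subtournament $T[A]$ contains $\binom{|A|}{2}$ arcs, each of which contributes to the out-degree (in $T$) of some vertex of $A$. Hence $\binom{|A|}{2} \leq \sum_{v \in A} d_T^+(v) \leq |A|(k-1)$, which gives $|A| \leq 2k-1$. An analogous count on arcs within $T[B]$, using the in-degree bound, yields $|B|\leq 2k-1$. Combining these, $n = |A|+|B| \leq 4k-2$, contradicting the hypothesis $n \geq 4k-1$. The one mild subtlety is verifying that $A$ and $B$ are disjoint, which is where the full strength of the bound $n\geq 4k-1$ (as opposed to $n\geq 2k+1$) is needed; apart from that, the argument is a clean double-counting and presents no real obstacle.
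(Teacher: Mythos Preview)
Your proof is correct. The paper itself does not supply a proof of this proposition; it simply remarks that the statement ``follows directly from Proposition~2.2.2 of \cite{Tour-book}'' and moves on. Your argument is therefore a genuine addition: a clean, self-contained double-counting proof that avoids any external reference.

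One minor quibble with your closing commentary (not with the proof itself): you say that the disjointness of $A$ and $B$ is ``where the full strength of the bound $n\geq 4k-1$ \ldots\ is needed''. In fact, disjointness only requires $n-1 > 2(k-1)$, i.e.\ $n\geq 2k$; the full strength $n\geq 4k-1$ is used only in the last line, where $n=|A|+|B|\leq (2k-1)+(2k-1)=4k-2$ yields the contradiction. This does not affect correctness, but you may want to adjust that sentence.
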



We need the following simple characterization of $k$-arc-strong tournaments of order $2k+1$.
\begin{proposition}[Folklore]\label{eul}
A tournament on $2k+1$ vertices is $k$-arc-strong if and only if it is eulerian.
\end{proposition}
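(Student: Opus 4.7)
The plan is to prove both directions by a short degree-counting argument exploiting that in a tournament on $2k+1$ vertices, every vertex $v$ satisfies $d^+(v)+d^-(v)=2k$.

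For the forward direction, assume $T$ is $k$-arc-strong. Taking $S=\{v\}$ in the definition gives $d^+(v)\geq k$ and $d^-(v)\geq k$ for every vertex $v$. Combined with $d^+(v)+d^-(v)=2k$, this forces $d^+(v)=d^-(v)=k$ for all $v$, so $T$ is eulerian.

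For the converse, assume $T$ is eulerian. For any non-empty proper subset $S\subseteq V(T)$ with $|S|=s$, the identity
\[
d^+(S)-d^-(S)=\sum_{v\in S}\bigl(d^+(v)-d^-(v)\bigr)=0
\]
shows $d^+(S)=d^-(S)$. Since $T$ is a tournament on $2k+1$ vertices, the total number of edges between $S$ and $V(T)\setminus S$ equals $s(2k+1-s)$, so $d^+(S)=d^-(S)=\tfrac{1}{2}s(2k+1-s)$. The function $s\mapsto s(2k+1-s)$ on $\{1,\ldots,2k\}$ achieves its minimum at the endpoints $s=1$ and $s=2k$, with value $2k$; hence $d^+(S)\geq k$ for every such $S$, proving $k$-arc-strongness.

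There is no real obstacle here: both directions follow immediately from the single observation that equal in- and out-degree at every vertex (the eulerian condition) is equivalent, on $2k+1$ vertices, to each being exactly $k$, after which the cut-counting inequality $s(2k+1-s)\geq 2k$ handles arbitrary sets.
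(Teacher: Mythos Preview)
Your proof is correct and essentially the same as the paper's: both directions are handled by the same degree-counting argument. The only cosmetic difference is that for the converse you first invoke the set-level balance $d^+(S)=d^-(S)$ (valid in any eulerian digraph) and then count total cut edges, whereas the paper directly computes $d^+(S)=\sum_{v\in S}d^+(v)-\binom{|S|}{2}=|S|\bigl(k-\tfrac{|S|-1}{2}\bigr)$; both expressions equal $\tfrac{1}{2}s(2k+1-s)$ and yield the same bound.
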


For sake of completeness, we give a short proof of this fact.

\begin{proof}
    Let $T$ be a tournament on $2k+1$ vertices.
    If $T$ is $k$-arc-strong, then every vertex $u$ in $T$ has in- and out-degree at least $k$, but since $d_T^+(u)+d_T^-(u) =2k$, we deduce that $d_T^+(u)=d_T^-(u)$ and so $T$ is eulerian.
    Reciprocally, if $T$ is eulerian, then $d_T^+(u)=d_T^-(u)=k$ for every vertex $u$ of $T$.
    Now consider a partition $(V_1,V_2)$ of $V(T)$ where $V_1$ and $V_2$ are non-empty.
    Then the number of arcs from $V_1$ to $V_2$ is at least 
    \begin{align*}
        \sum_{v \in V_1} d_T^+(v) - \binom{|V_1|}{2} 
        &\geq |V_1| \left( k - \frac{|V_1|-1}{2} \right) \\
        &\geq k
    \end{align*}
    and so $T$ is $k$-arc-strong.
\end{proof}

We need the following orientation property of mixed graphs that can be found in \cite{FF}.
\begin{proposition}\label{prop:eul-or}
    Let $H$ be a mixed graph whose underlying graph is eulerian. Then $H$ has an eulerian orientation if and only if $d_{H}(S)\geq d_{H}^+(S)-d_{H}^-(S)$ for all $S \subseteq V(H)$.
\end{proposition}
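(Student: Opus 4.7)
The plan is to treat the two directions separately.

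For \emph{necessity}, suppose $D'$ is an eulerian orientation of $H$ and fix $S \subseteq V(H)$. Summing the identity $d^+_{D'}(v) = d^-_{D'}(v)$ over $v \in S$ and cancelling the arcs internal to $S$, one obtains $d^+_{D'}(S) = d^-_{D'}(S)$. Letting $x$ denote the number of edges of $\delta_H(S)$ that $D'$ orients out of $S$, we have $d^+_{D'}(S) = d_H^+(S) + x$ and $d^-_{D'}(S) = d_H^-(S) + (d_H(S) - x)$, whence $x = \tfrac{1}{2}\bigl(d_H(S) + d_H^-(S) - d_H^+(S)\bigr)$. Since $x \geq 0$, we conclude $d_H(S) \geq d_H^+(S) - d_H^-(S)$.

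For \emph{sufficiency}, I would reduce to the classical prescribed out-degree orientation theorem (Hakimi): an undirected graph $G$ admits an orientation with $d^+(v) = a(v)$ for every $v$ if and only if $\sum_{v} a(v) = |E(G)|$ and $\sum_{v \in S} a(v) \geq |E(G[S])|$ for every $S \subseteq V(G)$. Set $a(v) := \tfrac{1}{2}\bigl(d_H(v) + d_H^-(v) - d_H^+(v)\bigr)$. Since $\UG(H)$ is eulerian, $d_H(v) + d_H^+(v) + d_H^-(v)$ is even, so $a(v) \in \mathbb{Z}$, and applying the hypothesis to $\{v\}$ and to $V(H)\setminus \{v\}$ gives $0 \leq a(v) \leq d_H(v)$. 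A direct count yields $\sum_v a(v) = |E(H)|$. For any $S \subseteq V(H)$, expanding $\sum_{v \in S} d_H(v) = 2|E(H[S])| + d_H(S)$ together with the analogous identities for $d_H^{\pm}$ shows that the Hakimi cut condition $\sum_{v \in S} a(v) \geq |E(H[S])|$ is equivalent to the hypothesis $d_H(S) \geq d_H^+(S) - d_H^-(S)$. Hakimi's theorem then provides an orientation $D^*$ of the edges of $H$ with $d^+_{D^*}(v) = a(v)$; combining $D^*$ with the arcs of $H$ produces a digraph in which $d^+(v) = a(v) + d_H^+(v) = d_H(v) - a(v) + d_H^-(v) = d^-(v)$ at every $v$, so this digraph is eulerian.

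The main obstacle is the sufficiency direction, and specifically the translation between the cut condition on the mixed graph given in the hypothesis and the cut condition required by Hakimi's theorem on the underlying edge graph. This is a short but slightly careful bookkeeping step, and it is precisely where the parity assumption on $\UG(H)$ and the full family of cut inequalities get used.
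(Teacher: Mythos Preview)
Your proof is correct. The necessity argument is clean, and for sufficiency your reduction to Hakimi's prescribed out-degree theorem goes through exactly as you outline: with $a(v)=\tfrac{1}{2}(d_H(v)+d_H^-(v)-d_H^+(v))$ one computes $\sum_{v\in S}a(v)=e_H(S)+\tfrac{1}{2}\bigl(d_H(S)-(d_H^+(S)-d_H^-(S))\bigr)$, so the Hakimi condition $\sum_{v\in S}a(v)\geq e_H(S)$ is precisely the hypothesis; integrality and the bounds $0\le a(v)\le d_H(v)$ follow from the eulerian assumption on $\UG(H)$ and the cases $S=\{v\}$, $S=V(H)\setminus\{v\}$ as you say.

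Note, however, that the paper does not actually prove this proposition: it is stated as a known orientation result and attributed to \cite{FF} (Ford--Fulkerson). There it is typically obtained from the feasibility criterion for circulations / max-flow min-cut rather than via Hakimi's theorem. The two routes are essentially equivalent---Hakimi's theorem is itself a direct corollary of flow feasibility---so your argument is a perfectly valid self-contained derivation, just packaged through a different intermediate lemma than the cited source.
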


Finally, we state two basic tools from probability theory.
\begin{proposition}[Union Bound]\label{union}
    Let $E_1,\ldots,E_\ell$ be a set of events in a random experiment and $E$ the event that at least one of $E_1,\ldots,E_\ell$ occurs. Then $\Pr(E)\leq \sum_{i=1}^\ell \Pr(E_i)$.
\end{proposition}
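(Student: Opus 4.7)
The plan is to reduce the problem to a disjoint union of events so that the additivity axiom of probability can be applied directly. Specifically, I would set $F_1 = E_1$ and, for $i\in\{2,\ldots,\ell\}$, define $F_i = E_i \setminus (E_1\cup \cdots \cup E_{i-1})$. These sets are pairwise disjoint by construction, their union equals $E_1\cup\cdots\cup E_\ell = E$, and each satisfies $F_i\subseteq E_i$.

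Applying countable additivity to the disjoint decomposition gives $\Pr(E) = \sum_{i=1}^\ell \Pr(F_i)$. The containment $F_i\subseteq E_i$ combined with monotonicity of the probability measure yields $\Pr(F_i)\leq \Pr(E_i)$ for every $i$, and summing these $\ell$ inequalities gives the desired bound $\Pr(E)\leq \sum_{i=1}^\ell \Pr(E_i)$.

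An equally clean alternative, which avoids the slightly artificial definition of the $F_i$, is induction on $\ell$. The base case $\ell=1$ is immediate. For the inductive step, letting $E' = E_1\cup\cdots\cup E_{\ell-1}$, inclusion–exclusion for two events yields $\Pr(E'\cup E_\ell) = \Pr(E') + \Pr(E_\ell) - \Pr(E'\cap E_\ell) \leq \Pr(E') + \Pr(E_\ell)$, and one then applies the induction hypothesis to $\Pr(E')$.

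There is no genuine obstacle: the statement is a direct consequence of the probability axioms, and the only implicit hypothesis is that the $E_i$ are measurable, which is built into the word \emph{event}. I would therefore expect a two- or three-line proof in the paper.
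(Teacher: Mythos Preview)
Your argument is correct; both the disjointification approach and the two-event induction are standard and complete proofs of the Union Bound. Note, however, that the paper does not actually prove this proposition: it simply cites \cite[Part II, Section 5]{molloy2002graph} for the proof, so there is no in-paper argument to compare against.
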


\begin{proposition}[Chernoff's Bound]\label{chernoff}
    If $X$ is a random variable following a binomial law with parameters $p \in [0,1]$ and $n \geq 0$, then
    for every $\epsilon \in [0,1]$
    \[
    \Pr[X \leq (1-\epsilon)pn] \leq \exp \left(-\frac{\epsilon^2}{2} pn\right).
    \]
\end{proposition}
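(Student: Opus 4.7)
The plan is to apply the standard Chernoff trick: for any $t > 0$, by Markov's inequality applied to the nonnegative random variable $e^{-tX}$, we have
\[
\Pr[X \leq (1-\epsilon)pn] \;=\; \Pr[e^{-tX} \geq e^{-t(1-\epsilon)pn}] \;\leq\; e^{t(1-\epsilon)pn}\, \mathbb{E}[e^{-tX}].
\]
Since $X = \sum_{i=1}^n X_i$ is a sum of $n$ independent Bernoulli variables of parameter $p$, the moment generating function factorises, giving $\mathbb{E}[e^{-tX}] = (1-p+pe^{-t})^n = (1+p(e^{-t}-1))^n$; the pointwise bound $1+x \leq e^x$ then upgrades this to $\mathbb{E}[e^{-tX}] \leq \exp(np(e^{-t}-1))$.

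Next I would optimise the exponent over $t > 0$. Differentiating $t(1-\epsilon) + e^{-t} - 1$ and setting the derivative to zero yields $e^{-t} = 1-\epsilon$, i.e.\ $t = -\ln(1-\epsilon)$, which is positive for $\epsilon \in (0,1)$; the boundary cases $\epsilon = 0$ and $\epsilon = 1$ can be handled by direct inspection (the statement is trivial at $\epsilon=0$, and at $\epsilon=1$ one takes the limit, or argues that $\Pr[X \leq 0]$ is at most $(1-p)^n \leq e^{-pn}$). Substituting this optimal $t$ back gives
\[
\Pr[X \leq (1-\epsilon)pn] \;\leq\; \exp\bigl(np\bigl[-(1-\epsilon)\ln(1-\epsilon)-\epsilon\bigr]\bigr).
\]

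To conclude, it suffices to verify the elementary one-variable inequality $-(1-\epsilon)\ln(1-\epsilon)-\epsilon \leq -\epsilon^2/2$ on $[0,1]$. This is where the constant $1/2$ in the stated bound enters, and it is the only slightly non-routine ingredient; I expect this estimate to be the main, though very minor, obstacle. One clean way is to expand $\ln(1-\epsilon) = -\sum_{i\geq 1}\epsilon^i/i$, which telescopes to
\[
-(1-\epsilon)\ln(1-\epsilon)-\epsilon \;=\; -\sum_{i \geq 2} \frac{\epsilon^i}{i(i-1)} \;\leq\; -\frac{\epsilon^2}{2},
\]
valid for $\epsilon \in [0,1)$ and extending to $\epsilon=1$ by continuity. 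Combining this with the optimised Chernoff bound above yields the claim. The probabilistic core of the argument is entirely standard; all nontriviality is concentrated in the final scalar inequality.
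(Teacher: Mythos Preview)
Your proof is correct and is exactly the standard exponential-moment argument for Chernoff's bound. The paper does not prove this proposition at all: it simply states it and refers the reader to \cite[Part II, Section 5]{molloy2002graph} for a proof, so there is nothing to compare beyond noting that your argument is the textbook one found in such references.
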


We refer the reader to~\cite[Part II, Section 5]{molloy2002graph} for an introduction to the probabilistic method, including proofs of Propositions~\ref{union} and~\ref{chernoff}.

\section{ Bounds on \texorpdfstring{$\sinv'_k(n)$}{sinv'k(n)}}\label{sec:oriented}
This section is dedicated to the extremal results we have on the $k$-arc-strong inversion number. In particular, we prove Theorem~\ref{thm:extrem}, which we restate.

\extrem*

First, we prove the upper bound.
Let $d$ be a positive integer.
A multigraph $G$ is said to be {\bf $d$-degenerate} if every submultigraph of $G$ has a vertex of degree at most $d$.
Every $d$-degenerate graph admits a {\bf $d$-degenerate ordering}, that is an ordering $(v_1, \dots, v_n)$ of the vertices of $G$ such that every vertex has at most $d$ neighbours with lower indices.

We shall need the following proposition, which, as observed in \cite{BJHK}, is a direct consequence of Corollary~2 in~\cite{zoltans06}.
\begin{proposition}\label{prop:nash_williams_more_precise}
Let $G$ be a multigraph that has a $k$-arc-connected orientation for some positive integer $k$
and let $(e_1, f_1),\dots ,(e_t, f_t)$ be a collection of pairwise disjoint pairs of parallel edges in G. Then G has
a $k$-arc-connected orientation in which $e_i$ and $f_i$ are oriented in opposite directions for $i = 1,\dots , t$.
\end{proposition}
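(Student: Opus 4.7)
The plan is to reduce the statement to an orientation theorem for mixed graphs, namely the mixed-graph extension of Nash-Williams' orientation theorem. Given the pairs $(e_1, f_1), \dots, (e_t, f_t)$, I construct a mixed graph $M$ on vertex set $V(G)$ by pre-orienting each pair into a digon: if $e_i$ and $f_i$ both join the vertices $u_i$ and $v_i$, pre-orient $e_i$ as the arc $u_iv_i$ and $f_i$ as the arc $v_iu_i$. All other edges of $G$ remain undirected in $M$. An orientation of $M$ extending this pre-orientation, that is $k$-arc-connected, is precisely a $k$-arc-connected orientation of $G$ in which $e_i$ and $f_i$ point in opposite directions for every $i$, which is what we need.

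The tool I would use is the following classical orientation result for mixed graphs: $M$ admits a $k$-arc-connected orientation (extending the pre-oriented arcs) if and only if $d_M^-(S) + |\delta_M(S)| \geq k$ for every non-empty proper $S \subsetneq V$, where $d_M^-(S)$ is the number of pre-oriented arcs with head in $S$ and tail outside $S$, and $|\delta_M(S)|$ is the number of undirected edges of $M$ crossing the cut. (This is essentially Frank's mixed-graph orientation theorem and is presumably what Corollary~2 of the cited reference provides.) Necessity of the condition is immediate, so the content is the sufficiency. I would cite this as a black box rather than reprove it.

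To verify the condition, fix a non-empty proper subset $S \subsetneq V$. Let $p$ be the number of indices $i$ for which the pair $(e_i, f_i)$ crosses $(S, V\setminus S)$, and let $q$ be the number of remaining edges of $G$ crossing this cut; then $|\delta_G(S)| = 2p + q$. By construction each crossing pair contributes exactly one pre-oriented arc entering $S$ and one leaving $S$, so $d_M^-(S) = p$ and $|\delta_M(S)| = q$. Since $G$ has a $k$-arc-connected orientation, Nash-Williams' orientation theorem implies that $G$ is $2k$-edge-connected, hence $2p + q = |\delta_G(S)| \geq 2k$. Consequently
\[
d_M^-(S) + |\delta_M(S)| \;=\; p + q \;\geq\; \frac{2p+q}{2} \;=\; \frac{|\delta_G(S)|}{2} \;\geq\; k,
\]
as required. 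Invoking the mixed-graph orientation theorem yields a $k$-arc-connected orientation of $M$, which is the desired orientation of $G$.

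The only real obstacle is identifying the right black box: the bare Nash-Williams theorem for undirected graphs does not suffice, because it does not allow us to prescribe any orientations. The mixed-graph version (equivalently, a submodular-flow argument) is what carries the parity-like constraint imposed by the prescribed pairs. Everything else is a one-line cut-counting check, as displayed above.
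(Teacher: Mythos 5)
The paper itself gives no proof of this proposition --- it is quoted as a direct consequence of Corollary~2 of the cited reference --- so there is nothing internal to compare against; what matters is whether your self-contained argument stands. Your reduction (pre-orient each pair $(e_i,f_i)$ as a digon, then invoke an orientation theorem for mixed graphs) is the natural route, and your cut-counting is fine, but the black box you invoke is misstated, and this is a genuine gap. The condition ``$d_M^-(S)+|\delta_M(S)|\ge k$ for every non-empty proper $S$'' is \emph{not} sufficient for a mixed graph to admit a $k$-arc-connected orientation. Take $k=1$ and $M$ consisting of two vertices joined by a single undirected edge and no arcs: every cut satisfies $d_M^-(S)+|\delta_M(S)|=1\ge 1$, yet no strong orientation exists, since the one edge can enter only one side. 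The correct statement (Frank's orientation theorem, obtained via submodular flows) replaces the single-cut condition by a partition condition: for every partition $\{V_1,\dots,V_r\}$ of $V$ one needs $e_E(\mathcal{P})\ge\sum_{j}\bigl(k-d_A^-(V_j)\bigr)^+$, where $e_E(\mathcal{P})$ counts the undirected edges joining distinct parts, together with the analogous co-partition condition.

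The good news is that your arithmetic repairs the gap with no new ideas. For each part $V_j$, writing $p_j$ for the number of prescribed pairs and $q_j$ for the number of remaining edges of $G$ crossing $\delta(V_j)$, your inequality gives $k-p_j\le q_j/2$ from $2p_j+q_j\ge 2k$. Each crossing undirected edge lies in $\delta(V_j)$ for exactly two parts, so $\sum_j q_j=2\,e_E(\mathcal{P})$, and summing over $j$ yields $\sum_j(k-p_j)^+\le e_E(\mathcal{P})$, which is exactly the partition condition; since the pre-oriented arcs form digons, $d_A^-(V_j)=d_A^+(V_j)=p_j$ and the co-partition condition is the same computation. So the proof goes through once you cite the correct form of the mixed-graph orientation theorem and run your halving argument over partitions rather than single cuts.
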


We use the following result which has recently been proved in a related paper by the second, third, and fourth author \cite{havet2024diameter}.
\begin{theorem}[\cite{havet2024diameter}]\label{theorem:bound_diam_L_degenerate}
Let $G$ be an $n$-vertex $d$-degenerate graph.
For any two orientations $\vec{G}_1,\vec{G}_2$ of $G$, one can transform $\vec{G}_1$ into $\vec{G}_2$ by inverting at most $\log n + 2d-1$ sets.
\end{theorem}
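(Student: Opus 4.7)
The plan is to recast the problem in algebraic terms over $\mathbb{F}_2$ and proceed by a doubling induction along the $d$-degenerate ordering. Let $F \subseteq E(G)$ denote the set of edges oriented differently in $\vec{G}_1$ and $\vec{G}_2$. A family of subsets $(X_1,\dots,X_t)$ of $V(G)$, applied as successive inversions, reverses exactly those edges whose two endpoints lie together in an odd number of the $X_i$. So, associating to each vertex $v$ the incidence vector $\chi_v = (\mathbf{1}[v \in X_i])_{i \in [t]} \in \mathbb{F}_2^t$, the family transforms $\vec{G}_1$ into $\vec{G}_2$ if and only if $\chi_u \cdot \chi_v = \mathbf{1}[uv \in F]$ in $\mathbb{F}_2$ for every edge $uv \in E(G)$. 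The theorem therefore reduces to the following purely algebraic statement: for every $d$-degenerate graph $G$ on $n$ vertices and every edge-labeling $\ell\colon E(G) \to \mathbb{F}_2$, there exist vectors $(\chi_v)_{v \in V(G)}$ in $\mathbb{F}_2^t$ with $t \leq \lceil \log n \rceil + 2d - 1$ whose pairwise dot products realise $\ell$ on every edge.

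Fix a $d$-degenerate ordering $v_1,\dots,v_n$ of $G$ and, for each integer $k \geq 0$, let $G_k$ be the subgraph induced by the first $\min(2^k,n)$ vertices. I would prove by induction on $k$ that the restricted labeling on $G_k$ admits such a realisation in dimension at most $k + 2d - 1$; since $\lceil \log n \rceil$ doublings exhaust $V(G)$, the announced bound follows. The doubling step is the heart of the argument: given a solution $\chi^{(1)}_v \in \mathbb{F}_2^{k+2d-1}$ for the lower half $L = V(G_k)$, I apply induction a second time to the subgraph induced by the upper half $U$ consisting of the next $2^k$ vertices (which is itself $d$-degenerate with at most $2^k$ vertices) to obtain vectors $\chi^{(2)}_u \in \mathbb{F}_2^{k+2d-1}$ solving the within-$U$ edges. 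One then combines the two partial assignments by appending a single new coordinate, set to $0$ on $L$ and to an adjustable value on $U$. This preserves all within-half dot products and leaves some controlled degrees of freedom to handle the cross-edges between $L$ and $U$. Crucially, by the $d$-degeneracy of the ordering, every $u \in U$ has at most $d$ neighbours in $L$, hence at most $d$ cross-edge constraints.

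The main obstacle is ensuring that these $\leq d$ cross-edge constraints at each upper vertex can always be simultaneously resolved while adding only one new dimension per doubling step. My plan is to treat the $2d - 1$ coordinates that the induction starts with as \emph{slack} coordinates whose role is precisely to absorb cross-edge corrections, and to maintain throughout the recursion the invariant that, for every $u \in U$, the at most $d$ vectors $\chi^{(1)}_v$ indexed by the lower neighbours of $u$ remain linearly independent when restricted to those slack coordinates. Under this invariant, the at most $d$ linear equations on $\chi^{(2)}_u$ coming from the cross-edges become solvable. Preserving the invariant through the doubling amounts to choosing the slack-coordinate values on $U$ so as to avoid a controlled union of codimension-$(2d-1)$ affine subspaces of $\mathbb{F}_2^{2d-1}$, which a counting or greedy argument shows is possible precisely because the slack dimension is $2d - 1$. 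Establishing and propagating this general-position invariant under doubling is the technical crux; once it is in place, the full construction follows routinely from the induction hypothesis.
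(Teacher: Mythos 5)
Your algebraic reformulation (vectors $\chi_v\in\mathbb{F}_2^t$ with $\chi_u\cdot\chi_v=\mathbf{1}[uv\in F]$ on every edge) is correct and is indeed the standard starting point; note, though, that the paper does not prove this theorem itself but imports it from the companion paper \cite{diameter}. The problem is that your doubling mechanism cannot work as described. The single new coordinate you append is set to $0$ on the lower half $L$, so for every cross-edge $uv$ with $v\in L$ it contributes $\epsilon_u\cdot 0=0$ to $\chi_u\cdot\chi_v$; it therefore gives you no degree of freedom whatsoever on the cross-edges, which is exactly where you claim to use it. Conversely, if the values $\epsilon_u$ vary over $U$, then for $u,u'\in U$ with $\epsilon_u=\epsilon_{u'}=1$ the within-$U$ product flips, contradicting your claim that within-half products are preserved. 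The cross-edge constraints thus fall entirely on the vectors $\chi^{(2)}_u$, which are already pinned down by the second application of the induction hypothesis to $G[U]$; ``absorbing'' them by editing $\chi^{(2)}_u$ on the $2d-1$ slack coordinates destroys the within-$U$ constraints you just established. The invariant-propagation step that you explicitly defer is not a routine technicality: it is the entire proof, and the doubling scaffolding around it does not generate the $\log n$ term in any way you have justified.

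The actual argument is a single greedy pass along the $d$-degenerate ordering $(v_1,\dots,v_n)$, with the $\log n$ arising from a counting (union-bound) step rather than from a recursion depth. Take $t=\lceil\log n\rceil+2d-1$ and choose $\chi_{v_1},\dots,\chi_{v_n}$ in order, maintaining the invariant that for every vertex $w$ the already-assigned vectors of its lower neighbours are linearly independent. When $v_i$ is processed it has at most $d$ lower neighbours, so its $\le d$ edge constraints form a consistent affine system whose solution set has size at least $2^{t-d}$; to preserve the invariant one must additionally avoid, for each of the at most $n-1$ later vertices $w$ with $v_i$ in its lower neighbourhood, the span (a coset of dimension at most $d-1$) of the other assigned vectors of that neighbourhood. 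Since $2^{t-d}\ge n\cdot 2^{d-1}>(n-1)2^{d-1}$ exactly when $t\ge\log n+2d-1$, a valid choice of $\chi_{v_i}$ exists at every step. If you want to salvage your write-up, replace the doubling induction by this greedy scheme; the two summands of the bound then have transparent roles ($d$ dimensions lost to the constraints, $d-1+\log n$ to the avoidance count), whereas in your scheme neither summand is actually accounted for.
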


Let $G$ be a multigraph and let $S$ be a subset of $V(G)$ with $\emptyset \subsetneq S \subsetneq V(G)$.
The {\bf $S$-cut} in $G$, denoted by $\delta_G(S)$, is the set of all the edges $uv$ in $G$ with $u \in S$ and $v \not\in S$.
A {\bf cut} in $G$ is an $S$-cut for some non-trivial set $S$ (that is $S\neq \emptyset$ and  $S \neq V(G)$).
Note that a multigraph is $k$-edge-connected if and only if there is no cut in $G$ of size at most $k-1$.
For all pair of distinct vertices $u,v$ in $G$, a {\bf $(u,v)$-cut} in $G$ is a cut $\delta_G(S)$ of $G$ with $u \in S$ and $v \not\in S$.
%
%
A multigraph $G$ is {\bf minimally $k$-edge-connected} if it is $k$-edge-connected and 
$G\setminus e$ is not $k$-edge-connected 
for any edge $e \in E(G)$.
Equivalently, for every edge $uv$, there is a cut of size exactly $k$ containing $uv$.

The next result gives an upper bound on the number of edges in a multigraph where each edge is contained in a small cut.
\begin{lemma}\label{lemma:min_k_connected_mad}
    Let $n$ and $k$ be positive integers with $n \geq k+1$, and let $G$ be an $n$-vertex multigraph.
    If for every edge $uv \in E(G)$ there is a $(u,v)$-cut of size at most $k$ in $G$, then $G$ has at most $k(n-1)$ edges.
\end{lemma}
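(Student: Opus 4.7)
My approach is to invoke the Gomory--Hu tree of $G$. Without loss of generality $G$ is connected (otherwise handle each connected component separately; summing the bound on each component yields $\sum_i k(n_i - 1) \leq k(n-1)$). Let $T$ be a Gomory--Hu tree of $G$: a tree with vertex set $V(G)$ and capacity function $c \colon E(T) \to \mathbb{Z}_{\geq 0}$ such that (i) $\lambda_G(u,v) = \min_{e \in P_T(u,v)} c(e)$ for every pair of vertices $u,v$, where $P_T(u,v)$ is the unique $u$-$v$ path in $T$, and (ii) deleting any $e \in E(T)$ partitions $V(G)$ into $(S_e, V(G) \setminus S_e)$ with $|\delta_G(S_e)| = c(e)$.

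The hypothesis of the lemma, by Menger's theorem, translates to $\lambda_G(u,v) \leq k$ for every edge $uv \in E(G)$. Hence the path $P_T(u,v)$ contains some edge of capacity at most $k$; choose one arbitrarily and call it $\phi(uv)$. This defines a map $\phi \colon E(G) \to E(T)$ whose image lies entirely in $\{e \in E(T) : c(e) \leq k\}$.

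Next I would double-count. Fix $e \in E(T)$. For each $uv \in \phi^{-1}(e)$, the endpoints $u,v$ lie in different components of $T - e$, so $uv$ is a $G$-edge crossing the cut $\delta_G(S_e)$. Therefore $\phi^{-1}(e) \subseteq \delta_G(S_e)$, which gives $|\phi^{-1}(e)| \leq |\delta_G(S_e)| = c(e) \leq k$ whenever $e$ lies in the image of $\phi$, and trivially $|\phi^{-1}(e)| = 0$ otherwise. Summing over the $n-1$ edges of $T$ yields
\[
|E(G)| \;=\; \sum_{e \in E(T)} |\phi^{-1}(e)| \;\leq\; (n-1)\cdot k,
\]
which is exactly the required inequality.

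The main obstacle is simply recalling that Gomory--Hu trees exist for arbitrary (weighted) multigraphs; once this classical tool is invoked the rest is a one-line pigeonhole. If one prefers to avoid this machinery, the same bound can be obtained by induction on $n$ by proving that under the hypothesis some vertex $v$ has $d_G(v) \leq k$, and then observing that the hypothesis descends to $G - v$ (any $(u,w)$-cut in $G$ restricts to a cut in $G - v$ of no larger size). Establishing the existence of such a low-degree vertex uses the submodular and posimodular inequalities for the cut function to uncross a minimum-size cut containing an edge with the cut provided by the hypothesis, and it is precisely this uncrossing that is cleanly packaged inside the Gomory--Hu tree, making the tree approach the more economical one.
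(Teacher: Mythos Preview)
Your proof via the Gomory--Hu tree is correct. The map $\phi$ is well defined because the hypothesis gives $\lambda_G(u,v)\le k$ for every edge $uv$, and the key observation that $\phi^{-1}(e)\subseteq \delta_G(S_e)$ with $|\delta_G(S_e)|=c(e)\le k$ is exactly what makes the double count go through. Reducing to connected components first is also fine, and Gomory--Hu trees do exist for multigraphs (equivalently, integer-weighted graphs).

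The paper takes a different and more elementary route: it greedily packs $k$ edge-disjoint spanning forests $F_1,\dots,F_k$ maximising $\sum_i |E(F_i)|$, and argues that any edge $e=uv$ not covered by some $F_i$ would force each $F_i$ to already contain a $(u,v)$-path, yielding $k$ edge-disjoint $(u,v)$-paths in $G\setminus e$ and hence no $(u,v)$-cut of size at most $k$ through $e$, contradicting the hypothesis. Thus $E(G)=\bigcup_i E(F_i)$ and $|E(G)|\le k(n-1)$. Your argument is cleaner once the Gomory--Hu machinery is granted, and it makes transparent why the bound is $k$ times the number of tree edges; the paper's argument avoids invoking any structural theorem at all and is self-contained in three lines. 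Both are short, and the choice is really a matter of taste.
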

\begin{proof}
    Let $F_1,\ldots,F_k$ be a set of $k$ edge-disjoint spanning forests in $G$ such that $\sum_{i=1}^k|E(F_i)|$ is maximized. 
    If there is an edge $e=uv \in E(G)\setminus \bigcup_{i=1}^k E(F_i)$, then, as $e$ cannot be added to $E(F_i)$, we obtain that $F_i$ contains a $(u,v)$-path for $i\in [k]$. 
    Hence $G\setminus e$ does have any $(u,v)$-cut whose size is smaller than $k$, a contradiction. 
    This yields $|E(G)|=\sum_{i=1}^k|E(F_i)|\leq k(n-1)$.
\end{proof}

We are now ready to prove the upper bound in Theorem~\ref{thm:extrem}.
\begin{lemma}\label{extremsup}Let $n$ and $k$ be positive integers with $n \geq k+1$. Then,
    for every $2k$-edge-connected $n$-vertex digraph $D$, $\sinv'_k(D) \leq \log n + 4k-3$.
\end{lemma}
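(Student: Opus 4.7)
The plan is to apply Theorem~\ref{theorem:bound_diam_L_degenerate} to a suitable sparse substructure of $\UG(D)$. Concretely, I would first extract a minimally $2k$-edge-connected spanning submultigraph $H$ of $\UG(D)$ by greedily deleting edges while preserving $2k$-edge-connectedness. Minimality ensures that every edge of $H$ lies in a $(u,v)$-cut of $H$ of size exactly $2k$, and this cut-inclusion property is inherited by every submultigraph $H' \subseteq H$: if $uv \in E(H')$, any $(u,v)$-cut of $H$ restricts to a $(u,v)$-cut of no larger size in $H'$. Lemma~\ref{lemma:min_k_connected_mad} therefore applies to every such $H'$, bounding $|E(H')|$ by $2k(|V(H')|-1)$ and hence the simple-graph degeneracy of $H$.

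Next I would fix a target orientation. By the Nash-Williams orientation theorem $H$ admits a $k$-arc-strong orientation, and Proposition~\ref{prop:nash_williams_more_precise} applied to the pairs of parallel edges of $H$ refines this to a $k$-arc-strong orientation $\vec{H}$ of $H$ in which every pair of parallel edges is oriented as the corresponding digon of $D$. Invoking Theorem~\ref{theorem:bound_diam_L_degenerate} on $H$ with the established degeneracy bound then provides at most $\log n + 4k - 3$ vertex subsets $X_1, \ldots, X_t$ whose successive inversion transforms the restriction of $D$ to $E(H)$ into $\vec{H}$. Applying these same inversions to the full digraph $D$ may flip arcs lying outside $H$ as well, but the resulting digraph $D'$ satisfies $D'|_{E(H)} = \vec{H}$. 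Since $\vec{H}$ is $k$-arc-strong and $D'$ contains it as a spanning sub-digraph, $D'$ is itself $k$-arc-strong, yielding $\sinv'_k(D) \leq \log n + 4k - 3$.

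The main obstacle is extracting the sharp degeneracy bound in the first step. A direct application of Lemma~\ref{lemma:min_k_connected_mad} with parameter $2k$ only gives an average-degree bound strictly less than $4k$ on every subgraph of $H$, hence $(4k-1)$-degeneracy, which through Theorem~\ref{theorem:bound_diam_L_degenerate} would yield the weaker bound $\log n + 8k - 3$. To close this factor-of-two gap and reach $\log n + 4k - 3$, one needs simple degeneracy at most $2k-1$. The most promising route is to exploit the multigraph structure: a pair of parallel edges (a digon of $D$) contributes twice to the edge-count budget of Lemma~\ref{lemma:min_k_connected_mad} but only once to the simple underlying graph, so a carefully chosen $H$ should let the doubled-edge contribution absorb the extra factor. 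If $D$ lacks sufficiently many digons for this, one may alternatively pick $H$ as a packing of $k$ edge-disjoint spanning trees (which is $(2k-1)$-degenerate by Nash-Williams' arboricity bound), and then augment with additional edges or parallel copies to restore $2k$-edge-connectedness without harming the simple-graph degeneracy.
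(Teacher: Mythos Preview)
Your outline mirrors the paper's proof exactly: reduce to a minimally $2k$-edge-connected spanning substructure, invoke Lemma~\ref{lemma:min_k_connected_mad} for a degeneracy bound, choose a $k$-arc-strong target orientation compatible with the digons via Proposition~\ref{prop:nash_williams_more_precise}, and then apply Theorem~\ref{theorem:bound_diam_L_degenerate}.

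The gap you isolate is genuine, and it is present in the paper's own argument as well. The paper asserts that a minimally $2k$-edge-connected $\UG(D)$ is $(2k-1)$-degenerate, but this is simply false: $K_{2k+1}$ is minimally $2k$-edge-connected and has degeneracy $2k$, since every vertex has degree exactly $2k$. More generally, Lemma~\ref{lemma:min_k_connected_mad} with parameter $2k$ gives only $|E(H')|\leq 2k(|V(H')|-1)$ for every subgraph $H'$, hence minimum degree at most $4k-1$ in every subgraph, i.e.\ $(4k-1)$-degeneracy. Plugging $d=4k-1$ into Theorem~\ref{theorem:bound_diam_L_degenerate} yields $\log n + 8k - 3$, not $\log n + 4k - 3$. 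Your proposed workarounds do not close this: an oriented graph may have no digons at all to absorb the extra factor, and a union of $k$ edge-disjoint spanning trees need not be $2k$-edge-connected, so augmenting it while keeping the simple degeneracy at $2k-1$ is precisely the unresolved step. The honest conclusion from this line of argument, both yours and the paper's, is $\sinv'_k(D)\leq \log n + 8k - 3$; reaching $4k-3$ would require a sharper degeneracy bound than Lemma~\ref{lemma:min_k_connected_mad} provides, and none is supplied.
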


\begin{proof}
    Without loss of generality, suppose that $D$ is minimally $2k$-edge-connected.
    Then by Lemma~\ref{lemma:min_k_connected_mad}, every subgraph of $\UG(D)$ has average degree smaller than $2k$.
    This implies that $\UG(D)$ is $(2k-1)$-degenerate.
    Let $D_0$ be the subdigraph obtained from $D$ by removing all digons.
    By Proposition~\ref{prop:nash_williams_more_precise}, there is an orientation $D'_0$ of $\UG(D_0)$ such that together with the digons of $D$, this digraph is $k$-arc-strong.
    By Theorem~\ref{theorem:bound_diam_L_degenerate}, there is a set $\mathcal{X}$ of at most $\log n + 2(2k-1) -1$ inversions transforming $D_0$ into $D'_0$. Since digons are preserved by inversions, we deduce that $\Inv(D;\mathcal{X})$ is $k$-arc-strong, and so $\sinv'_k(D) \leq \log n + 4k-3$.
\end{proof}

In order to prove the lower bound in Theorem~\ref{thm:extrem}, we first need the following intermediate result.

\begin{lemma}\label{sizet}
    For every positive integer $t$, there exists a 2-edge-connected digraph $D$ on $2^{t-1}+1+\binom{2^{t-1}+1}{2}$ vertices with a specified vertex $s$ such that  each digraph obtained from $D$ by applying at most $t-1$ inversions contains a sink or source distinct from $s$.
\end{lemma}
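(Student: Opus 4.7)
My plan is to give an explicit construction and then use a standard $\mathbb{F}_2$-pigeonhole argument on the ``characteristic vectors'' of vertices under a family of inversions. Set $m = 2^{t-1}+1$, and define $D$ on vertex set $\{v_1,\dots,v_m\} \cup \{w_{ij} : 1\le i<j\le m\}$ by taking the transitive tournament on $\{v_1,\dots,v_m\}$ (arcs $v_iv_j$ for $i<j$) together with arcs $v_iw_{ij}$ and $v_jw_{ij}$ for every $i<j$. Choose $s = v_1$. Clearly $|V(D)| = m + \binom{m}{2}$, as required. For $2$-edge-connectivity of $\UG(D)$, I would observe that $\UG(D)$ contains a copy of $K_m$ on $\{v_1,\dots,v_m\}$, and that each $w_{ij}$ is adjacent only to $v_i$ and $v_j$ which are themselves joined by an edge of this clique; removing any single edge therefore leaves $\UG(D)$ connected.

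Now consider an arbitrary family $\mathcal{X} = (X_1,\dots,X_\ell)$ of at most $t-1$ inversions. For every $v \in V(D)$ let $\chi(v) \in \mathbb{F}_2^\ell$ be the vector with $\chi(v)_k = 1$ iff $v \in X_k$. From the introduction, an arc $uv$ is reversed in $\Inv(D;\mathcal{X})$ exactly when $|\{k : u,v \in X_k\}|$ is odd, that is, when the $\mathbb{F}_2$-inner product $\chi(u)\cdot\chi(v)$ equals $1$. Since $\ell \le t-1$, there are only $2^{t-1} < m$ possible values for $\chi(v_i)$, so by pigeonhole there exist indices $i<j$ with $\chi(v_i) = \chi(v_j)$.

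Focus on the vertex $w_{ij}$. Its only neighbours in $D$ (and hence in $\Inv(D;\mathcal{X})$, since inversions never create new adjacencies) are $v_i$ and $v_j$, and both arcs $v_iw_{ij}$, $v_jw_{ij}$ go into $w_{ij}$ in $D$. The arc $v_iw_{ij}$ is reversed iff $\chi(v_i)\cdot\chi(w_{ij}) = 1$, and the arc $v_jw_{ij}$ is reversed iff $\chi(v_j)\cdot\chi(w_{ij}) = 1$; since $\chi(v_i) = \chi(v_j)$, these two conditions coincide. Either both arcs are reversed and $w_{ij}$ becomes a source of $\Inv(D;\mathcal{X})$, or neither is reversed and $w_{ij}$ remains a sink. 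In either case $w_{ij}$ is a sink or source of $\Inv(D;\mathcal{X})$ distinct from $s = v_1$, which is what we want. The only real obstacle is guessing the right construction so that the $\mathbb{F}_2$-pigeonhole on the $v_i$'s kicks in exactly when the inversion budget is exhausted; once the gadgets $w_{ij}$ are designed as degree-$2$ ``common out-neighbours'' of pairs of $v_i$'s, the rest is a direct verification.
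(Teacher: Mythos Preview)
Your proof is correct and follows essentially the same approach as the paper: build a set $S$ of $2^{t-1}+1$ ``base'' vertices, attach a degree-$2$ gadget $w_{ij}$ as a common out-neighbour of each pair, pigeonhole on the $\mathbb{F}_2^{t-1}$ characteristic vectors of the base vertices to find two with equal vector, and conclude that the corresponding gadget stays a sink or becomes a source. The only difference is that you additionally place a transitive tournament on $\{v_1,\dots,v_m\}$, whereas the paper leaves $S$ arcless; this is a harmless extra and in fact makes your $2$-edge-connectivity check cleaner (and valid even for $t=1$, where $m=2$).
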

\begin{proof}
    Let first $S$ be a set of $2^{t-1}+1$ vertices. Now we obtain the digraph $D$ by adding a vertex $v_{\{s_1,s_2\}}$ as well as the arcs $s_1v_{\{s_1,s_2\}}$ and $s_2v_{\{s_1,s_2\}}$ for every $\{s_1,s_2\}\subseteq S$ . It is easy to see that $D$ is 2-edge-connected and that the number of vertices of $D$ is $2^{t-1}+1+\binom{2^{t-1}+1}{2}$. Now let $D'$ be obtained from $D$ by inverting a collection of $t-1$ sets $X_1,\ldots,X_{t-1} \subseteq V(D)$. As $|S|=2^{t-1}+1$, there exist distinct vertices $s_1,s_2 \in S$  such that for $i=1,\ldots,t-1$, we have either $\{s_1,s_2\}\subseteq X_i$ or $\{s_1,s_2\}\cap X_i = \emptyset$.
    Then in each of the $t-1$ inversions, either both or none of the arcs incident to $v_{\{s_1,s_2\}}$ are inverted. We obtain that $v_{\{s_1,s_2\}}$ is either a source or a sink in $D'$. The statement hence follows for an arbitrary $s \in S$.
\end{proof}

The next lemma gives a construction of graphs of arbitrary size that need a significant amount of inversions to become strong.

\begin{lemma}\label{arbn1}
    For every positive integer $n\geq 3$, there is a 2-edge-connected digraph $D$ on $n$ vertices such that any digraph obtained from $D$ by applying at most $\frac{1}{2}\lceil\log n \rceil-1$ inversions contains a sink or a source.
\end{lemma}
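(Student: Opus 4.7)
The plan is to invoke Lemma~\ref{sizet} and pad its digraph to have exactly $n$ vertices by attaching digon-pendant vertices at the specified vertex $s$. The small cases $n\in\{3,4,5\}$ will be handled directly: since $\frac{1}{2}\lceil\log n\rceil-1<1$ there, we only need a 2-edge-connected digraph on $n$ vertices that itself contains a source or a sink (for example, for $n=3$ take a digon between $a$ and $b$ together with the arcs $c\to a$ and $c\to b$, and for $n=4,5$ attach one or two further vertices to $a$ by digons). For $n\geq 6$, I let $t$ be the largest integer with
\[
n_0(t):=2^{t-1}+1+\binom{2^{t-1}+1}{2}\leq n,
\]
noting that $t\geq 2$ since $n_0(2)=6$. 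The plan is then to apply Lemma~\ref{sizet} to get a 2-edge-connected digraph $D_0$ on $n_0(t)$ vertices with specified vertex $s$, having the property that every family of at most $t-1$ inversions of $D_0$ leaves a source or sink in $V(D_0)\setminus\{s\}$, and to form $D$ by adding $n-n_0(t)$ new vertices $u_1,\ldots,u_{n-n_0(t)}$, each joined to $s$ by a digon.

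The resulting $D$ has $n$ vertices, and $\UG(D)$ is 2-edge-connected, since $\UG(D_0)$ is and each $u_i$ is attached to $s$ by two parallel edges. The key structural point will be that a digon is preserved as a digon by any inversion: on a set $X\subseteq V(D)$, if both its endpoints lie in $X$ both of its arcs reverse (so the digon is unchanged), and otherwise neither arc is reversed. Hence no $u_i$ can ever become a source or a sink, because $s$ remains both an in-neighbour and an out-neighbour after any sequence of inversions.

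Given this, for any family $\mathcal{X}=(X_i)_{i\in I}$ of at most $t-1$ inversions of $D$, every source or sink of $\Inv(D;\mathcal{X})$ lies in $V(D_0)$. Moreover, the vertices in $V(D_0)\setminus\{s\}$ are incident only to arcs of $D_0$, so their in- and out-degrees in $\Inv(D;\mathcal{X})$ coincide with those in $\Inv(D_0;(X_i\cap V(D_0))_{i\in I})$. Applying Lemma~\ref{sizet} to this restricted family of at most $t-1$ inversions of $D_0$ then yields a source or sink in $V(D_0)\setminus\{s\}$, which is also a source or sink of $\Inv(D;\mathcal{X})$.

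It remains to verify $t-1\geq\frac{1}{2}\lceil\log n\rceil-1$. By maximality of $t$, $n<n_0(t+1)=(2^t+1)(2^t+2)/2$, and a short check shows this quantity is at most $2^{2t}$ for $t\geq 2$; therefore $\lceil\log n\rceil\leq 2t$, which gives the required inequality. The main obstacle throughout is the padding step: one has to ensure it does not create sources or sinks that a single further inversion can eliminate, and that it does not increase the number of vertices that behave non-trivially under inversions. Attaching the $u_i$ by digons to $s$ accomplishes both thanks to the symmetric inversion behaviour of digons noted above.
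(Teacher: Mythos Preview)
Your proof is correct and follows essentially the same strategy as the paper: apply Lemma~\ref{sizet} and pad with digon-pendants at the specified vertex $s$. The only difference is bookkeeping: the paper fixes $t=\tfrac{1}{2}\lceil\log n\rceil$ and verifies $n_0(t)\leq n$ via an explicit estimate, whereas you take $t$ maximal with $n_0(t)\leq n$ and then bound $t$ from below using $n<n_0(t+1)\leq 2^{2t}$; your route is a bit cleaner and sidesteps any concern about $\tfrac{1}{2}\lceil\log n\rceil$ not being an integer.
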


\begin{proof}
For $n=3,4$, the statement is clearly true. We may therefore suppose that $n \geq 5$ and hence $\frac{3}{2 \sqrt{2}\sqrt{n}}\leq \frac{1}{2}$.
    Let $n'=2^{\frac{1}{2}\lceil\log n\rceil-1}+1+\binom{2^{\frac{1}{2}\lceil\log n\rceil-1}+1}{2}$. By Lemma~\ref{sizet}, there is a digraph $D'$ on $n'$ vertices together with a vertex $s \in V(D')$ such that every graph obtained from $D'$ by applying at most $\frac{1}{2}\lceil\log n \rceil-1$ inversions contains a sink or source distinct from $s$.

    Next observe that 
    \begin{align*}
        n'&=2^{\frac{1}{2}\lceil\log n\rceil-1}+1+\binom{2^{\frac{1}{2}\lceil\log n\rceil-1}+1}{2}\\
        &=2^{\frac{1}{2}\lceil\log n\rceil-1}+1+\frac{1}{2}(2^{\frac{1}{2}\lceil\log n\rceil-1}+1)2^{\frac{1}{2}\lceil\log n\rceil-1}\\
        &\leq 2^{\frac{1}{2}\log n-\frac{1}{2}}+1+\frac{1}{2}(2^{\frac{1}{2}\log n-\frac{1}{2}}+1)2^{\frac{1}{2}\log n-\frac{1}{2}}\\
        &= \frac{\sqrt{n}}{\sqrt{2}}+1+\frac{1}{2}\left(\frac{\sqrt{n}}{\sqrt{2}}+1\right)\frac{\sqrt{n}}{\sqrt{2}}\\
        &=\frac{n}{4}+\frac{3}{2 \sqrt{2}}\sqrt{n}+1\\
        &=\frac{n}{4}+\frac{3}{2 \sqrt{2}\sqrt{n}}n+1\\
        &\leq \frac{n}{4}+\frac{n}{2}+\frac{n}{4}\\
        &=n.
    \end{align*}

We now obtain $D$ from $D'$ by adding a new set of $n-n'$ vertices and for each of them, adding a digon linking it to $s$. By construction, $D$ has $n$ vertices. Further, as the same property holds for $D'$, in any graph obtained from $D$ by at most $\frac{1}{2}\lceil\log n \rceil-1$ inversions, one of the vertices in $V(D')\setminus \{s\}$ is a source or a sink.
\end{proof}

We are now ready to prove the lower bound in Theorem~\ref{thm:extrem}.

\begin{lemma}\label{extreminf}
    For every pair of positive integers $n,k$ with $n \geq k+2$, there is a $2k$-edge-connected digraph $D$ on $n$ vertices such that $\sinv'_k(D) \geq \frac{1}{2}\log(n -k+1)$.
\end{lemma}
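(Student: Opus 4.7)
The plan is to reduce to the $k=1$ case already handled by Lemma~\ref{arbn1} by attaching $k-1$ apex vertices via digons. Since digons are preserved by every inversion, these apices contribute exactly $k-1$ to both the in- and out-degree of any original vertex, so they cannot mask a source or sink in the remainder nor require inversions of their own.

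Concretely, I would set $n' = n-k+1 \geq 3$ and apply Lemma~\ref{arbn1} to obtain a 2-edge-connected digraph $D'$ on $n'$ vertices in which any family of at most $\frac{1}{2}\lceil\log n'\rceil - 1$ inversions leaves a source or a sink. Then I define $D$ by adding new vertices $u_1,\ldots,u_{k-1}$ to $D'$ and inserting, for every pair $\{x,y\} \subseteq V(D)$ that meets $\{u_1,\ldots,u_{k-1}\}$, a digon between $x$ and $y$; so $|V(D)| = n$. Checking that $\UG(D)$ is $2k$-edge-connected is a routine case analysis on a non-trivial cut $(S,\bar{S})$: writing $S_1 = S \cap \{u_1,\ldots,u_{k-1}\}$ and $\bar{S}_1 = \{u_1,\ldots,u_{k-1}\} \setminus S_1$, and analogously $S_0,\bar{S}_0$ for $V(D')$, either both $S_1$ and $\bar{S}_1$ are non-empty (in which case the digons contribute at least $2|S_1|(n-|S_1|)\geq 2k$ by an elementary concavity estimate using $|S_1|+|\bar{S}_1| = k-1$ and $n \geq k+2$), or all apices lie on the same side of the cut, in which case each vertex of $V(D')$ on the opposite side contributes $2(k-1)$ digon-edges and one adds the two crossing $\UG(D')$-edges supplied by the 2-edge-connectivity of $D'$.

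For the lower bound I exploit that inversions preserve digons: given a $k$-arc-strengthening family $\mathcal{X} = (X_1,\ldots,X_m)$ of $D$, the restricted family $\mathcal{X}' = (X_i \cap V(D'))_{i \in [m]}$ inverts $D'$ into some digraph $D''$, and the in- and out-degrees of every $v \in V(D')$ in $\Inv(D;\mathcal{X})$ are exactly those in $D''$ shifted by $+k-1$ via the apex digons. Since $\Inv(D;\mathcal{X})$ is $k$-arc-strong, each $v \in V(D')$ has in- and out-degree at least $1$ in $D''$, so $D''$ has neither source nor sink. Lemma~\ref{arbn1} then forces $m > \frac{1}{2}\lceil\log n'\rceil - 1$, and integrality of $m$ gives $\sinv'_k(D) \geq \frac{1}{2}\log(n-k+1)$.

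The main technical obstacle is matching the integer bound produced by Lemma~\ref{arbn1} with the real-valued target $\frac{1}{2}\log(n-k+1)$ when $\log n'$ is not an integer. The cleanest way to avoid a fractional loss is to invoke Lemma~\ref{sizet} directly with $t$ chosen as the largest integer satisfying $2^{t-1}+1+\binom{2^{t-1}+1}{2}\leq n'$, pad to exactly $n'$ vertices with leaves attached via digons to the specified vertex $s$, and verify $t \geq \frac{1}{2}\log n'$ from $f(t+1) > n'$ using the elementary bound $f(t+1) \leq 2^{2t}$ valid for $t \geq 2$, with the handful of remaining small values $n' \in \{3,4,5\}$ checked by hand.
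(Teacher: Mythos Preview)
Your approach is essentially the same as the paper's: build $D$ by taking the $2$-edge-connected digraph $D_0$ on $n-k+1$ vertices supplied by Lemma~\ref{arbn1}, adjoin $k-1$ apex vertices, and connect every apex to every other vertex by a digon; then use that digons are preserved under inversion to conclude that any $k$-arc-strengthening family of $D$ restricts to a family on $V(D_0)$ eliminating all sources and sinks. The paper leaves the $2k$-edge-connectivity check implicit and reads off the bound directly from Lemma~\ref{arbn1}, whereas you spell out both of these points and additionally flag (and patch, via a direct appeal to Lemma~\ref{sizet}) the rounding mismatch between the integer threshold $\tfrac{1}{2}\lceil\log n'\rceil-1$ and the real target $\tfrac{1}{2}\log n'$; this extra care is warranted but does not change the underlying argument.
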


\begin{proof}
By assumption, we have $n-k+1\geq 3$.
    Hence, by Lemma~\ref{arbn1}, there exists a 2-edge-connected digraph $D_0$ on $n-k+1$ vertices such that any digraph obtained from $D_0$ by inverting fewer than $\frac{1}{2} \log(n-k+1)$ sets has a sink or a source. 
    Now let $D$ be the digraph obtained 
    from $D_0$ by adding a set $S$ of $k-1$ vertices, for any pair of vertices in $S$ a digon linking them, and for every vertex in $S$ and every vertex in $V(D_0)$ a digon linking these vertices. 
    %
    One can check that $D$ has $n$ vertices and is $2k$-edge-connected.
    
    Now consider a family of subsets $\cal X$ such that
    $D'=\Inv(D; {\cal X})$ is $k$-arc-strong. Any vertex $v\in V(D_0)$ is linked to the vertices of $S$ by digons in $D$. Since an inversion transforms a digon into a digon, it is also connected to the vertices of $S$ by digons in $D'$. Hence, in $D'$, $v$ has at least one in-neighbour and one out-neighbour in $V(D_0)$. In other words, the subdigraph of $D'$ induced by $V(D_0)$ has no source and no sink. Thus $|{\cal X}| \geq \frac{1}{2}\log(n-k+1)$.
    
     Therefore $\sinv'_k(D) \geq \frac{1}{2}\log(n-k+1)$.
\end{proof}

     Finally, Lemmas~\ref{extremsup} and~\ref{extreminf} imply Theorem~\ref{thm:extrem}.

\section{Complexity of computing \texorpdfstring{$\sinv_k$}{sinvk} and \texorpdfstring{$\sinv'_k$}{sinv'k}\label{sec:complexity}}

In this section, we deal with the complexity of computing the parameters $\sinv_k(D)$ and $\sinv'_k(D)$ for a given digraph $D$. More concretely, we prove Theorems~\ref{archard1},~\ref{thm:inapprox-arc},~\ref{verhard}, and~\ref{thm:inapprox-vertex}.

In order to prove these results in this section, we shall use another well-studied hypergraph parameter. Let $H$ be a hypergraph. For some positive integer $k$, a {\bf $k$-colouring} of $H$ is a mapping $\phi:V(H)\rightarrow [k]$ such that every hyperedge $e \in E(H)$ contains two vertices $u$ and $v$ with $\phi(u)\neq \phi(v)$. The {\bf chromatic number} of $H$, denoted  by $\chi(H)$ is the smallest integer $k$ such that $H$ admits a $k$-colouring. Generalizing the definition for graphs, for some $S \subseteq V(H)$, the {\bf $S$-cut}, denoted by $\delta_H(S)$, is the set of hyperedges $e\in E(H)$ satisfying $e\cap S\neq \emptyset$ and $e\setminus S\neq \emptyset$.
A {\bf cut cover} of $H$ is a collection $X_1,\ldots,X_t$ of subsets of $V(H)$ such that $\cup_{i=1}^t \delta_H(X_i)=E(H)$.
The {\bf cut covering number} of $H$, denoted by $\cc(H)$, is the minimum integer $t$ such that there is a cut cover of $H$ of size $t$. 
The following well-known result shows a close relationship between the cut covering number and the chromatic number. It was proved in \cite{https://doi.org/10.1002/jgt.3190010208} for graphs and literally generalizes to hypergraphs.

\begin{proposition}[Harary, Hsu, and~Miller~\cite{https://doi.org/10.1002/jgt.3190010208}]\label{prop:cc-chi}
For any hypergraph $H$, we have $\cc(H)=\lceil\log(\chi(H))\rceil$.
\end{proposition}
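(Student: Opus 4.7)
The plan is to prove both inequalities by establishing a tight correspondence between cut covers of $G$ of size $t$ and proper vertex colorings of $G$ using at most $2^t$ colors. The correspondence is via binary encodings: a family $X_1, \dots, X_t$ induces the map $v \mapsto (\mathds{1}[v \in X_1], \dots, \mathds{1}[v \in X_t]) \in \{0,1\}^t$ from $V(G)$ to the set of binary strings of length $t$.

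First I would show $\cc(G) \geq \lceil \log \chi(G) \rceil$. Given a cut cover $X_1, \dots, X_t$, I color each vertex $v$ by its binary encoding as above, which uses at most $2^t$ colors. To verify this coloring is proper, suppose $u$ and $v$ receive the same color; then for every $i \in [t]$, either both $u,v$ lie in $X_i$ or neither does. In particular, no $\delta_G(X_i)$ contains the pair $\{u,v\}$, so $uv$ cannot be an edge of $G$ (otherwise it would be uncovered). Hence $\chi(G) \leq 2^t$, giving $t \geq \lceil \log \chi(G) \rceil$.

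Next I would show $\cc(G) \leq \lceil \log \chi(G) \rceil$. Set $t = \lceil \log \chi(G) \rceil$, so that $2^t \geq \chi(G)$. Fix a proper coloring $c: V(G) \to \{0,1\}^t$ using distinct binary strings for the $\chi(G)$ color classes. For each $i \in [t]$, define $X_i = \{v \in V(G) : c(v)_i = 1\}$. For any edge $uv \in E(G)$, properness of $c$ gives $c(u) \neq c(v)$, so there exists some coordinate $i$ in which $c(u)$ and $c(v)$ differ; then exactly one of $u, v$ lies in $X_i$, so $uv \in \delta_G(X_i)$. Thus $(X_1,\dots,X_t)$ is a cut cover, yielding $\cc(G) \leq t$.

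Neither direction presents any real obstacle; the proof is essentially a bookkeeping exercise via the binary encoding. The only subtlety is remembering that the ceiling arises from the fact that $t$ cuts can only distinguish at most $2^t$ vertex classes, which is precisely captured by the logarithm being rounded up.
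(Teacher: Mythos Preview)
The paper does not supply its own proof of this proposition; it is stated as a known result with a citation to Harary et al.\ and used as a black box. Your argument via the binary-encoding correspondence between cut covers of size $t$ and proper colorings with at most $2^t$ colors is correct and is precisely the standard proof of this fact, so there is nothing to compare.
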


It is well-known that the problem of deciding whether a given graph can be coloured with $t$ colours is NP-hard for any $t\geq 3$ (see \cite{schaefer1978complexity}). Moreover, the problem of deciding whether a hypergraph is 2-colourable is also NP-hard (see \cite{schaefer1978complexity}). It hence follows from Proposition~\ref{prop:cc-chi} that deciding whether the cut covering number of a given hypergraph is at most $t$ is NP-hard for any integer $t\geq 1$.
We shall show a reduction from this problem to the one of deciding whether $\sinv'_k(D) \leq t$  for a given oriented graph $D$.

We need the following result of Guruswami, Hastad, and Sudan~\cite{892074}.
\begin{proposition}[Guruswami, Hastad, and Sudan~\cite{892074}]\label{zuck}
Unless P=NP, there do not exist any integers $c_1,c_2$ with $2\leq c_1\leq c_2$ for which there exists a polynomial-time algorithm that distinguishes a $c_1$-colourable hypergraph from a not $c_2$-colourable hypergraph.
\end{proposition}

As a consequence, we directly obtain a negative result concerning the approximation of the cut covering number from Proposition \ref{prop:cc-chi}.

\begin{proposition}\label{approcc}
Unless P=NP, there do not exist any integers $c_1,c_2$ with $1\leq c_1\leq c_2$ for which there exists a polynomial-time algorithm that distinguishes a hypergraph of cut covering number at most $c_1$ from a hypergraph of cut covering number more than $c_2$.
\end{proposition}

We shall also use the following notation.
Given a digraph $D$ and a set $X$ of vertices in $D$, we denote by $\partial_D(X)$ the set of edges of $\UG(D)$ with exactly one endvertex in $X$:
$\partial_D(X) = \delta_{\UG(D)}(X)$. 

\subsection{Inversions  to become \texorpdfstring{$k$}{k}-arc-strong}\label{drei}

In this subsection, we prove Theorems~\ref{archard1} and \ref{thm:inapprox-arc}. The two proofs are based on a reduction given by the following lemma.

\begin{lemma}\label{lem:reduc2}
Given a hypergraph $H$ and a positive integer $k$, one can construct in polynomial time an oriented graph $D$ such that $\sinv'_k(D)=\cc(H)$ and $|V(D)|=|V(H)|+(2k+1)|E(H)|+2k+1$.
\end{lemma}

\begin{proof}
Let $<$ be an arbitrary ordering on $V(H)$.
We let $V(D)$ be the disjoint union of $V(H)$, sets of vertices  $Z_e=\{z^1_e,\ldots,z_e^{2k+1}\}$ for every $e \in E(H)$, and a set of vertices $W=\{w_1,\ldots,w_{2k+1}\}$. We add arcs to $D$ such that $D\langle W\rangle$ is a $k$-arc-strong tournament. We let $A(D)$ contain an arc from $v$ to $w_i$ for every $v \in V(H)$ and every $i=1,\ldots,k$ and an arc from $w_i$ to $v$ for every $v \in V(H)$ and every $i=k+1,\ldots,2k$. Next for every $e \in E(H)$, we add arcs to $D$ such that $D\langle Z_e\rangle$ is a $k$-arc-strong tournament. Finally, for every $e \in E(H)$, consider $u,v\in e$ with $u<v$ and $v<v'$ for all $v'\in e\setminus \{u,v\}$. We add the arcs $uz_e^{i}$ and ${z_e^{i}}v$ for $i=2,\ldots,k$ and the arc $v'{z_e^{1}}$ for all $v'\in e$. This finishes the description of $D$. Observe that $|V(D)|=|V(H)|+(2k+1)|E(H)|+2k+1$. 
We now show that $\sinv'_k(D)=\cc(H)$.

\smallskip

Let us first show that $\sinv'_k(D)\leq \cc(H)$. 
Let $(X_1,\ldots,X_{\cc(H)})$ be a minimum-size cut cover of $H$. 
For every $e\in E(H)$, there is a smallest $\alpha(e)$ such that $e \cap X_{\alpha(e)}$ and $e \setminus  X_{\alpha(e)}$ are nonempty. 
Now for $i\in [\cc(H)]$, let $X'_i=X_i\cup\{z_e^1:\alpha(e)=i\}$. 
Let $D'$ be the digraph obtained from $D$ by inverting $X'_1,\ldots,X'_{\cc(H)}$. We will show that $D'$ is $k$-arc-strong. 
Let $S \subseteq V(D)$ be non-empty and suppose that $\min\{d_{D'}^-(S),d_{D'}^{+}(S)\}<k$ .
By symmetry, we may suppose that $S \cap W \neq \emptyset$. 
As $D'\langle W\rangle=D\langle W\rangle$ is $k$-arc-strong, we obtain $W \subseteq S$. 
Next observe that in $D'$ every $v \in V(H)$ is incident to at least $k$ arcs coming from $W$ and $k$ arcs going to $W$. Thus $V(H)\subseteq S$. Now consider some $e \in E(H)$ and let $u,v\in e$ with $u<v$ and $v<v'$for all $v' \in e\setminus \{u,v\}$. 
As $D'\langle Z_e\rangle=D\langle Z_e\rangle$ and $D\langle Z_e\rangle$ is $k$-arc-strong by construction, we obtain that either $Z_e \subseteq S$ or $Z_e \cap S=\emptyset$.
Observe that, by construction, $z_e^1$ has an in-neighbour and an out-neighbour in $V(H)$ in $D'$. 
Further observe that $A(D')$ contains the arcs $uz_e^{i}$ and $z_e^{i}v$ for all $i \in \{2,\ldots,k\}$.
Thus $D'$ contains $k$ arcs from $V(H)$ to $Z_e$ and $k$ arcs from $Z_e$ to $V(H)$, and we obtain that $Z_e\subseteq S$.
This yields $S=V(D')$, so $D'$ is $k$-arc-strong. Thus $\sinv'_k(D)\leq \cc(H)$.

\smallskip

Let us now show that $\cc(H)\leq\sinv'_k(D)$. 
Let $(X_1,\ldots, X_t)$ be a minimum-size $k$-arc-strengthening family of $D$.
Let $D' =\Inv(D ; (X_i)_{i\in [t]})$ and for $i\in[t]$, let $X_i'=X_i \cap V(H)$.
We claim that $(X'_1, \dots, X'_t)$ is a cut cover of $H$.
In other words, for every $e \in E(H)$, there is some $i \in [t]$ such that $X'_i\cap e \neq \emptyset$ and $e \setminus X_i'\neq \emptyset$.

Suppose otherwise. It follows that there is exactly one arc entering $Z_e$ incident to $z_e^{j}$ in $D'$, for every $j=2,\ldots,k$. As $D'$ is $k$-arc-strong, we obtain that in $D'$, there exists at least one arc entering $Z_e$ incident to $z_e^{1}$ and at least one arc entering $Z_e$ incident to $z_e^{1}$. By assumption, for every $i \in[t]$ either $e \subseteq X_i$ or $e \cap X_i =\emptyset$ holds. Thus we deduce that for every $j \in [k]$, either all the arcs corresponding to edges in $\partial_{D}(Z_e)\cap \partial_{D}(z_e^{j})$ are inverted or none of them are, a contradiction.
Therefore, $(X_1',\ldots,X_t')$ is a cut cover of $H$, and so  $\cc(H)\leq t=\sinv'_k(D)$.
\end{proof}

Let $k$ and $t$ be two positive integers.
Lemma~\ref{lem:reduc2} and the NP-hardness of deciding whether a given hypergraph $H$ satisfies $\cc(H)\leq t$ for all $t\geq 1$ directly imply that deciding whether a given oriented graph $D$ satisfies $\sinv'_k(D) \leq t$ is NP-hard.
This problem is clearly in NP because a family $\mathcal{X}$ such that $\Inv(D;\mathcal{X})$ is $k$-arc-strong is a polynomial-size certificate that $\sinv'_k(D) \leq |\mathcal{X}|$.  
Hence it is NP-complete, which proves Theorem~\ref{archard1}.

Moreover, Proposition~\ref{approcc} and Lemma~\ref{lem:reduc2}  imply Theorem~\ref{thm:inapprox-arc}.

\subsection{Inversions to become \texorpdfstring{$k$}{k}-strong}\label{zwei}

In this subsection, we prove Theorems~\ref{verhard} and \ref{thm:inapprox-vertex}. The two proofs are based on a reduction given by the following lemma.

\begin{lemma}\label{lem:reduc3}
Given a hypergraph $H$ and a positive integer $k$, one can construct in polynomial time a $k$-strengthenable oriented graph $D$ such that $\sinv_k(D)=\cc(H)$ and $|V(D)|=|V(H)|+(2k+1)|E(H)|+2k+1$.
\end{lemma}

\begin{proof}
Let $V(D)$ be the disjoint union of $V(H)$, sets of vertices  $Z_e=\{z^1_e,\ldots,z_e^{2k+1}\}$ for every $e \in E(H)$, and a set of vertices $W=\{w_1,\ldots,w_{2k+1}\}$. 
We add arcs to $D$ such that $D\langle W\rangle$ is a $k$-strong tournament.
Let $A(D)$ contain an arc from $v$ to $w_i$ for every $v \in V(H)$ and every $i\in[k]$ and an arc from $w_i$ to $v$ for every $v \in V(H)$ and every $i\in \{k+1,\ldots,2k\}$.
Next for every $e \in E(H)$, we add arcs to $D$ such that $D\langle Z_e\rangle$ is a $k$-strong tournament. 
Further, for every $e \in E(H)$, $i\in [k]$ and $j\in [k-1]$, we add an arc from $z_e^{i}$ to $w_{j}$ and for every $e \in E(H)$, $i\in\{k+1,\ldots,2k\}$ and $j\in [k-1]$, we add an arc from  $w_{j}$ to $z_e^{i}$. 
Finally, for every $e\in E(H)$ and $v \in e$, we add the arc $v z_e^{1}$. This finishes the description of $D$. 
Observe that $|V(D)|=|V(H)|+(2k+1)|E(H)|+2k+1$. 
We now show that $\sinv_k(D)=\cc(H)$.
Note that this implies that $\sinv_k(D)$ is finite and thus $D$ is $k$-strengthenable.

\smallskip

We first show that $\sinv_k(D)\leq \cc(H)$. Let $(X_1,\ldots,X_t)$ be an optimal cut cover of $H$. 
Now for every $e\in E(H)$, there is a smallest $\alpha(e)$ such that $e \cap X_{\alpha(e)}$ and $e \setminus  X_{\alpha(e)}$ are nonempty. 
Now for every $i\in[t]$, let $X'_i=X_i\cup\{z_e^1:\alpha(e)=i\}$. Let $D'$ be the digraph obtained from $D$ by inverting $X'_1,\ldots,X'_t$. 
We will show that $D'$ is $k$-strong. Let $Y\subseteq V(D)$ with $|Y|\leq k-1$. We need to show that $D'-Y$ is strongly connected. 
As $D\langle W\rangle$ is $k$-strong, we obtain that $W\setminus Y$ is contained in a single strongly connected component $S$ of $D'-Y$. Next observe that every $v \in V(H)\setminus Y$ has an in-neighbour and an out-neighbour in $S$, so $V(H)\setminus Y \subseteq S$. 
Now consider some $e\in E(H)$. As $D'\langle Z_e\rangle=D\langle Z_e\rangle$ is $k$-strong, we obtain that $Z_e\setminus Y$ is contained in a single strongly connected component of $D'-Y$. 
If $Y\neq \{w_1,\ldots,w_{k-1}\}$, then there is at least one arc from $Z_e\setminus Y$ to $W \setminus Y$ and there is at least one arc from $W \setminus Y$ to $Z_e \setminus Y$, so $Z_e\setminus Y \subseteq S$. 
Finally, if $Y=\{w_1, \dots, w_{k-1}\}$, observe that, by construction, $D'$ contains an arc from $V(H)$ to $z_e^1$ and an arc from $z_e^1$ to $V(H)$. 
Again, we obtain $Z_e\setminus Y \subseteq S$. This yields that $D'$ is $k$-strong. We hence have $\sinv_k(D)\leq t=\cc(H)$.

\smallskip

We now show that $\cc(H)\leq\sinv_k(D)$. 
Let $(X_1,\ldots, X_t)$ be a minimum $k$-strengthening family of $D$.
For $i=1,\ldots,t$, let $X_i'=X_i \cap V(H)$.
We claim that $(X'_1, \dots, X'_t)$ is a cut cover of $H$.
In other words, for every $e \in E(H)$, there is some $i \in \{1,\ldots,t\}$ such that $e \cap X_{i}$ and $e \setminus  X_{i}$ are nonempty.

Suppose otherwise, that is for every $i\in [t]$ either $e \subseteq X_i$ or $e \cap X_i=\emptyset$ holds.
Thus all the arcs corresponding to edges in $\partial_{D}(Z_e)\cap \partial_{D}(z_e^{1})$ are inverted or none of them. This yields that in $D'- \{w_1,\ldots,w_{k-1}\}$,  there is either no arc entering $Z_e$ or no arc leaving $Z_e$. Hence $D'-\{w_1,\ldots,w_{k-1}\}$ is not strongly connected, a contradiction.
Therefore, we obtain that $(X_1',\ldots,X_t')$ is a cut cover for $H$. Thus $\cc(H)\leq t=\sinv_k(D)$, and we conclude that $\cc(H) = \sinv_k(D)$.
\end{proof}

Let $k$ and $t$ be two positive integers.
Lemma~\ref{lem:reduc3} and the NP-hardness of deciding whether a given hypergraph $H$ satisfies $\cc(H)\leq t$ for all $t\geq 1$ directly imply that deciding whether a given oriented graph $D$ satisfies $\sinv_k(D) \leq t$ is NP-hard.
This problem is clearly in NP because a family $\mathcal{X}$ such that $\Inv(D;\mathcal{X})$ is $k$-strong is a polynomial-size certificate that $\sinv'_k(D) \leq |\mathcal{X}|$.  
Hence it is NP-complete, which proves Theorem~\ref{verhard}.

Moreover Proposition~\ref{approcc} and Lemma~\ref{lem:reduc3}  imply Theorem~\ref{thm:inapprox-vertex}.

Lemma~\ref{lem:reduc3} and the NP-hardness of deciding whether a given hypergraph $H$ satisfies $\cc(H)\leq t$ for all $t\geq 1$ directly imply the following.
\begin{corollary}\label{rsedgzj}
Let $t$ and $k$ be positive integers.
Deciding whether $\sinv_k(D) \leq t$  for a given oriented graph $D$ is NP-complete.
\end{corollary}

\section{Bounds on \texorpdfstring{$M'_k$}{M'k} and \texorpdfstring{$M_k$}{Mk}}\label{sec:M}

This section is dedicated to giving upper and lower bounds on $M_k$ and $M_k'$, in particular proving Theorems~\ref{thm:exttournoi} and~\ref{thm:upperM'}. First, in Section \ref{sec:52}, we completely determine $M_k$ and $M_k'$ for some small values of $k$. Next, in Section \ref{sec:51}, we prove Theorem \ref{thm:m(2k+1)}, which is the lower bound in Theorem \ref{thm:exttournoi} and a slight extension of this result.   In Section \ref{sec:53}, we prove Theorem \ref{thm:M<2k}, which is the upper bound in Theorem \ref{thm:exttournoi}. We further give a slight improvement of this result for tournaments on exactly $2k+1$ vertices. Finally, in Section \ref{sec:54}, we give the somewhat involved, probabilistic proof of Theorem \ref{thm:upperM'}.

\subsection{Values of \texorpdfstring{$M'_1$}{M'1}, \texorpdfstring{$M_1$}{M1}, \texorpdfstring{$M'_2$}{M'2} and \texorpdfstring{$M_2$}{M2}}\label{sec:52}
We here provide the exact values of $M_i$ and $M_i'$ for $i \in \{1,2\}$.
\begin{proposition}\label{prop:M1}
Let $T$ be a tournament of order $n \geq 3$.
We have $\sinv_1(T) = \sinv'_1(T) =0$ if $T$ is strong and $\sinv_1(T) = \sinv'_1(T) =1$ otherwise.
In particular,  $m_1(n) = m'_1(n) =1$ for all $n\geq 3$ and $M_1=M'_1=1$.
\end{proposition}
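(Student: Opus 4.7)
The proposition has two parts. The equality $\sinv_1(T)=\sinv'_1(T)=0$ when $T$ is strong is immediate, since $1$-strong, $1$-arc-strong, and strong all coincide for digraphs with at least two vertices. The conclusions about $m_1,m'_1,M_1,M'_1$ then follow directly from the case analysis once we handle the non-strong case. So the only real content is: if $T$ is a tournament on $n\geq 3$ vertices which is not strong, then a single inversion suffices to make it strong.

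For the lower bound in that case, $\sinv'_1(T)\geq 1$ is immediate from the definition since $T$ is not $1$-arc-strong. For the upper bound $\sinv_1(T)\leq 1$, the plan is to exploit the existence of a Hamiltonian path in any tournament (R\'edei's theorem). Let $(v_1,v_2,\dots,v_n)$ be such a path. In the tournament $T$, exactly one of $v_1v_n$ and $v_nv_1$ is an arc. If $v_nv_1\in A(T)$, then together with the Hamiltonian path this produces a Hamiltonian cycle, and $T$ would already be strong, contradicting the hypothesis. Therefore $v_1v_n\in A(T)$.

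Now consider $X=\{v_1,v_n\}$. Since $T$ is a tournament, $T\langle X\rangle$ contains the single arc $v_1v_n$, so $\Inv(T;X)$ is obtained from $T$ by reversing only this arc. In $\Inv(T;X)$, the arcs $v_1v_2,v_2v_3,\dots,v_{n-1}v_n,v_nv_1$ form a Hamiltonian cycle, which implies that $\Inv(T;X)$ is strong. Hence $\sinv_1(T)\leq 1$, and combined with the inequalities $\sinv'_1(T)\leq \sinv_1(T)$ and $\sinv'_1(T)\geq 1$ we get equality to $1$.

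I do not expect any real obstacle: the whole argument rests on R\'edei's theorem and on the observation that inverting a two-element set reverses only the (at most one) arc between them. The final sentence of the proposition is then just the remark that for every $n\geq 3$ there exist both strong and non-strong tournaments on $n$ vertices, giving $m_1(n)=m'_1(n)=1$ and consequently $M_1=M'_1=1$.
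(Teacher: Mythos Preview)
Your proof is correct and follows essentially the same approach as the paper: use R\'edei's theorem to obtain a Hamiltonian path, observe that the endpoints must be joined by a forward arc since $T$ is not strong, and invert that pair to create a Hamiltonian cycle. The only very minor imprecision is in your last sentence: to get $m_1(n)=m'_1(n)=1$ you only need the existence of a non-strong tournament on $n$ vertices (e.g., the transitive one), not of a strong one.
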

\begin{proof}
Trivially, if $T$ is strong, then $\sinv_1(T)=0$. 
If $T$ is not strong, then $\sinv_1(T) \geq 1$. Now consider a hamiltonian path of $T$.
Such a path does exist by Redei's Theorem (see e.g. Theorem~1.4.2 in \cite{bang2009}). Let $x$ be its initial vertex and $y$ its terminal vertex.
As $T$ is not strong, we have $xy \in A(T)$, and hence inverting $\{x,y\}$ yields a tournament with a directed hamiltonian cycle because $V(T) \setminus\{x,y\} \neq \emptyset$.
It follows that this tournament is strong, and so $\sinv_1(T) = 1$.
\end{proof}


For every integer $n$, let $TT_n$ be the unique (up to isomorphism) acyclic tournament of order $n$.
Let $\vec{C}_3$ be the directed triangle, and let $S_4$ be the unique (up to isomorphism) strong tournament of order $4$.
Its vertex set is $\{a,b,c,d\}$ and its arc set is $\{ab, bc, cd, da, ca, db\}$.

\begin{proposition}\label{prop:M2}
$M_2=M'_2=2$.
\end{proposition}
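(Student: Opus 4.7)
The plan is to split the equality $M_2 = M'_2 = 2$ into the lower bound $M'_2 \ge 2$ (which, together with $\sinv_k \ge \sinv'_k$, also gives $M_2 \ge 2$) and the upper bound $M_2 \le 2$ (which, together with $\sinv'_k \le \sinv_k$, also gives $M'_2 \le 2$). The lower bound will be witnessed by a single explicit tournament on $5$ vertices, while the upper bound requires an argument that applies to every tournament of order at least $5$.

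For the lower bound, I will verify that $\sinv'_2(TT_5) \ge 2$, where the vertices of $TT_5$ are labelled $1 < 2 < 3 < 4 < 5$ along the transitive order (so $1$ is a source and $5$ a sink). Suppose, for contradiction, that a single inversion of some $X \subseteq V(TT_5)$ produced a $2$-arc-strong tournament. The in-neighbors of $1$ after the inversion are exactly the successors of $1$ lying in $X$, so to achieve $d^-(1) \ge 2$ we need $1 \in X$ and $|X \cap \{2,3,4,5\}| \ge 2$. The symmetric argument applied to $5$ forces $5 \in X$ and $|X \cap \{1,2,3,4\}| \ge 2$. Combining these gives $|X| \ge 4$. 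But then the out-neighbors of $1$ after the inversion are its successors outside $X$, i.e.\ $\{2,3,4,5\} \setminus X$, a set of size at most $1$, contradicting $d^+(1) \ge 2$. Hence $\sinv'_2(TT_5) \ge 2$, and since $|V(TT_5)| = 2\cdot 2 + 1$, we conclude $M'_2 \ge 2$ and $M_2 \ge 2$.

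For the upper bound $M_2 \le 2$, let $T$ be any tournament on $n \ge 5$ vertices. For $n \ge 19\cdot 2 - 2 = 36$, Theorem~\ref{nk1} already gives $\sinv_2(T) \le 1$, so it remains to treat the range $5 \le n \le 35$. My plan is to locate a $5$-vertex subset $S \subseteq V(T)$ and a family $\mathcal{X}$ of at most two inversions such that in $T' := \Inv(T; \mathcal{X})$ the sub-tournament $T'\langle S\rangle$ is a $2$-strong (equivalently, by Proposition~\ref{eul}, eulerian $2$-regular) tournament on $5$ vertices, and moreover every $v \in V(T) \setminus S$ has at least two in-neighbors and at least two out-neighbors in $S$ with respect to $T'$; then Proposition~\ref{lem:kstrong+} applied greedily extends $S$ to all of $V(T')$, making $T'$ itself $2$-strong. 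To build such an $S$ and $\mathcal{X}$, I would pick, using Proposition~\ref{4k-2} (and case inspection for $n \in \{5,6\}$), a vertex $v^*$ with $d^+(v^*), d^-(v^*) \ge 2$ and take $S$ to consist of $v^*$ together with two chosen in-neighbors and two chosen out-neighbors; one inversion inside $S$ suffices to turn $T\langle S\rangle$ into an eulerian tournament on $5$ vertices, and a second inversion taken on a carefully chosen subset of $V(T)\setminus S$ (together with a small part of $S$) can be used to balance the ``bad'' outside vertices.

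The main obstacle will be showing that a second inversion can simultaneously repair all outside vertices whose split into $S$ is $4$--$0$, $3$--$1$, $1$--$3$ or $0$--$4$, without destroying the $2$-strongness of $T'\langle S\rangle$. I expect this to follow from a careful choice of $v^*$ and $S$, exploiting the fact that in a near-transitive tournament the problematic vertices concentrate near the extremes of a median order, so a single inversion of an appropriate ``prefix $\cup$ suffix'' set balances them all at once. This analysis is the delicate part and is the step where the constant $2$ is achieved rather than the general bound $2k = 4$ of Theorem~\ref{thm:exttournoi}.
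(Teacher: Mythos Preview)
Your lower-bound argument has a slip: from $1,5\in X$, $|X\cap\{2,3,4,5\}|\ge 2$ and $|X\cap\{1,2,3,4\}|\ge 2$ you cannot conclude $|X|\ge 4$; the set $X=\{1,3,5\}$ satisfies all four constraints. The conclusion is still true and the fix is easy (e.g.\ also use $d^-(2)\ge 2$, which forces $2\in X$ and then $d^+(1)\le 1$), or argue as the paper does: the unique $2$-arc-strong tournament on five vertices is $R_5$, and $\inv(R_5)=2$, so $\sinv'_2(TT_5)=2$.

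The real gap is the upper bound, which you only sketch. Your plan spends one inversion to make a $5$-set $S$ eulerian and a second inversion to repair every $v\notin S$ whose split into $S$ is not $2$--$3$ or $3$--$2$. But bad vertices of opposite types ($4$--$0$ versus $0$--$4$) need opposite corrections, and a single inversion that meets $S$ in a fixed subset $Y$ flips exactly the $v$--$Y$ arcs for each outside $v$ it contains; there is no single $Y\subseteq S$ that turns both a $4$--$0$ vertex and a $0$--$4$ vertex into a balanced one, and any second inversion touching two or more vertices of $S$ will generally destroy the eulerian structure you just built. The ``prefix $\cup$ suffix'' heuristic has no evident reason to succeed for an arbitrary tournament, and you have not supplied the promised analysis.

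The paper avoids this entirely by induction on $n$. If $T$ has a vertex $v$ with $\min\{d^+(v),d^-(v)\}\ge 2$, then any $2$-strengthening family of $T-v$ also works for $T$ by Proposition~\ref{lem:kstrong+}; such a $v$ exists whenever $n\ge 7$ by Proposition~\ref{4k-2}, and for $n=6$ the only exception is $\vec C_3\Rightarrow\vec C_3$, handled by two explicit inversions. This reduces the whole upper bound to $n=5$, which the paper finishes by a short case analysis (Hamiltonian cycle if $T$ is strong, a handful of non-strong isomorphism types otherwise). This inductive reduction is exactly the idea your sketch is missing.
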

\begin{proof}
The rotative tournament $R_5$ of order $5$ is the only 2-arc-strong tournament of order $5$.
As observed in \cite{BBBP10}, we have $\inv(R_5)=2$, so $\sinv'_2(TT_5) = 2$.
Hence $M_2 \geq M'_2\geq 2$.

\medskip

Let us now prove that $M_2\leq 2$.
We shall prove by induction on $n$ that every tournament $T$ of order at least $5$ satisfies $\sinv_2(T) \leq 2$.

Assume first that $T$ is a tournament of order $5$.
If $T$ is strong, then, by Camion's theorem~\cite{camion1959}, it has a hamiltonian cycle
$v_1v_2v_3v_4v_5v_1$. Let $A^+=A(T)\cap \{v_1v_3, v_2v_4, v_3v_5, v_4v_1, v_5v_2\}$ and $A^-=A(T)\cap \{v_3v_1, v_4v_2, v_5v_3, v_1v_4, v_2v_5\}$.
We have $|A^+| + |A^-|=5$, so one of the two sets $A^+, A^-$ has at most two arcs. Reversing the arcs of this set, one after another, yields the 2-strong tournament $R_5$.

Assume now that $T$ is not strong. 
Then by possibly replacing $T$ by its {\bf converse}, that is the tournament $\Inv(D,V(D))$, one of the following property holds.
\begin{itemize}
\item $T = TT_5$ with hamiltonian path $v_1v_2v_3v_4v_5$. Then inverting $\{v_1, v_2, v_4, v_5\}$ and $\{v_1, v_5\}$ yields $R_5$.

\item $S_4\Ra \{x\}$. Then inverting $\{c, d, x\}$ and $\{c, d\}$ yields $R_5$.

\item $(\{x\} \Ra \vec{C}_3) \Ra \{y\}$ with $ \vec{C}_3 = abca$. Then inverting $\{a,x,y\}$ yields $R_5$.

\item $(\{x\} \Ra \{y\}) \Ra \vec{C}_3 $ with $ \vec{C}_3 = abca$.
Then inverting $\{a,b,c,y\}$ and $\{a,x,y\}$ yields $R_5$.

\end{itemize}
As $\sinv_2(\Inv(D,V(D)))=\sinv_2(D)$ for digraph $D$, the statement follows.
Assume now that $T$ has at least $6$ vertices.

Assume first $T$ has a vertex $v$ such that $\min\{d^+(v),d^-(v)\}\geq 2$.
By the induction hypothesis, $\sinv_2(T-v) \leq 2$, so there is a family ${\cal X}$ of at most two subsets of $V(T-v)$ such that
$\Inv(T-v; {\cal X})$ is $2$-strong.
Now $\Inv(T; {\cal X})-v = \Inv(T-v; {\cal X})$ and $\min\{d^+(v),d^-(v)\}\geq 2$. Thus, by Lemma~\ref{lem:kstrong+},
$\Inv(T; {\cal X})$ is $2$-strong, and it follows that $\sinv_2(T) \leq |{\cal X}| \leq 2$.

Assume now that every vertex has either in-degree at most $1$ or out-degree at most $1$. Then necessarily $T$ must be the tournament $\vec{C}_3\Ra \vec{C}_3$.
Let $V(T) = \{a,b,c,d,e,f\}$ with $\{a,b,c\} \Ra \{d,e,f\}$.
Then inverting $\{a,b,c,d\}$ and $\{a, d,e,f\}$ transforms $T$ into a $2$-strong tournament.
\end{proof}

\subsection{Lower bound on \texorpdfstring{$m'_k(2k+1)$}{mk(2k+1)}}\label{sec:51}

We shall first show the lower bound $m'_k(2k+1) = \Omega (\log k)$. We need the following result.


\begin{theorem}[McKay~\cite{McK90}]\label{count}
Let $n$ be an odd integer.
The number of labelled eulerian tournaments on $n$ vertices is  $$\left(\frac{2^{n+1}}{\pi n}\right)^{\frac{n-1}{2}}\sqrt{\frac{n}{\e}} (1 +o(1)).$$
\end{theorem}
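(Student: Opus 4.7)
The plan is to write $E_n$, the number of labelled eulerian tournaments on $[n]$ (with $n$ odd), as a Fourier integral and evaluate it by Laplace's method. An eulerian tournament is one in which every vertex has out-degree $d=(n-1)/2$. Inserting the Fourier representation $[k=0]=\int_{-\pi}^{\pi}e^{ik\theta}\,\frac{d\theta}{2\pi}$ for each of the $n$ out-degree constraints, summing over the $2^{\binom{n}{2}}$ orientations of the edges of $K_n$, and using $e^{i\theta_u}+e^{i\theta_v}=2e^{i(\theta_u+\theta_v)/2}\cos\frac{\theta_u-\theta_v}{2}$ to absorb the global phase $e^{-id\sum\theta_v}$, one obtains
$$E_n = 2^{\binom{n}{2}}\int_{[-\pi,\pi]^n}\prod_{u<v}\cos\!\left(\frac{\theta_u-\theta_v}{2}\right)\prod_v\frac{d\theta_v}{2\pi}.$$
For $n$ odd this integrand is $2\pi$-periodic in each $\theta_v$ (the sign change under $\theta_v\mapsto\theta_v+2\pi$ affects $n-1$ factors, and $n-1$ is even), so there are no boundary issues.

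Next I would localize the integral at the diagonal. Since $|\cos(x/2)|<1$ off $x\in 2\pi\mathbb{Z}$, the integrand is sharply concentrated at $\theta_1\equiv\cdots\equiv\theta_n\pmod{2\pi}$. Using periodicity to fix $\theta_1=0$ and passing to difference variables $\eta_v=\theta_v-\theta_1$ for $v\geq 2$, the remaining $(n-1)$-dimensional integral has log-integrand with quadratic part
$$-\tfrac{1}{8}\Bigl(\textstyle\sum_{v\geq 2}\eta_v^2 + \sum_{2\leq u<v\leq n}(\eta_u-\eta_v)^2\Bigr) = -\tfrac{1}{8}\bigl(n\|\eta\|^2-(\textstyle\sum_v\eta_v)^2\bigr),$$
corresponding to a Gaussian with precision matrix $A=\tfrac{1}{4}(nI-J)$ on $\mathbb{R}^{n-1}$; its eigenvalues are $1$ (once) and $n$ (with multiplicity $n-2$), so $\det A=n^{n-2}/4^{n-1}$. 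Evaluating the Gaussian integral and combining with the prefactor $2^{\binom{n}{2}}$ yields precisely the leading factor $\bigl(\tfrac{2^{n+1}}{\pi n}\bigr)^{(n-1)/2}\sqrt{n}$.

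The factor $1/\sqrt{e}$ comes from the quartic correction. A direct computation gives $A^{-1}=\tfrac{4}{n}(I+J)$, whence $\mathrm{Var}(\eta_v)=\mathrm{Var}(\eta_u-\eta_v)=8/n$, and consequently $\mathbb{E}[\eta_v^4]=\mathbb{E}[(\eta_u-\eta_v)^4]=3\cdot(8/n)^2=192/n^2$. Summing the quartic term $-\tfrac{1}{192}x^4$ from the expansion $\log\cos(x/2)=-x^2/8-x^4/192+O(x^6)$ over all $\binom{n}{2}$ arguments then gives
$$-\frac{1}{192}\cdot\binom{n}{2}\cdot\frac{192}{n^2}\longrightarrow -\frac{1}{2},$$
so exponentiation produces the factor $e^{-1/2}=1/\sqrt{e}$, matching the statement.

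The main obstacle is converting this heuristic into a rigorous asymptotic. Two error estimates must be controlled: (i) the contribution to the integral from the region where $\max_v|\eta_v|>n^{-1/2+\varepsilon}$ (for a small fixed $\varepsilon>0$) must be shown to be super-polynomially smaller than the main term, which follows from the fact that $\prod_{u<v}\cos^2((\theta_u-\theta_v)/2)$ decays exponentially in the number of pairs whose argument is bounded away from $0\pmod{2\pi}$, combined with a union bound over the possible ``bad'' coordinates; and (ii) the $O(x^6)$ Taylor remainders, once summed over $\Theta(n^2)$ pairs and integrated against the leading Gaussian, must contribute $o(1)$ to the exponent, which requires uniform bounds on the sixth moments of $\eta$ (following from Gaussian tail estimates). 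This is precisely the technical core of McKay's analysis.
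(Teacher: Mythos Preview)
The paper does not prove this theorem; it is quoted verbatim from McKay~\cite{McK90} and used as a black box in the proof of Theorem~\ref{thm:m(2k+1)}. So there is no ``paper's own proof'' to compare against.

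That said, your sketch is a faithful outline of McKay's original argument: the integral representation
\[
E_n = 2^{\binom{n}{2}}\int_{[-\pi,\pi]^n}\prod_{u<v}\cos\!\left(\frac{\theta_u-\theta_v}{2}\right)\prod_v\frac{d\theta_v}{2\pi}
\]
is correct, the Hessian computation $\det A=n^{n-2}/4^{n-1}$ and the resulting leading term $\bigl(\tfrac{2^{n+1}}{\pi n}\bigr)^{(n-1)/2}\sqrt{n}$ check out, and the quartic correction via $A^{-1}=\tfrac{4}{n}(I+J)$ giving $\mathbb{E}[(\eta_u-\eta_v)^4]=192/n^2$ and hence the extra $e^{-1/2}$ is exactly right. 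Your closing remarks about the two rigorous error estimates (tail truncation outside a shrinking neighbourhood of the diagonal, and uniform control of the sixth-order remainder) accurately identify what McKay has to do to turn the heuristic into a theorem. Nothing is missing from the outline; filling in (i) and (ii) is genuine work but contains no surprises.
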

We are now ready to prove the lower bound in Theorem~\ref{thm:exttournoi}.
\begin{theorem}\label{thm:m(2k+1)}
For every sufficiently large $k$, $m'_k(2k+1)\geq \frac{1}{2}\log (2k+1)$.
\end{theorem}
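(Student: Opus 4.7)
The plan is to use a counting argument (sometimes called a volume or compression argument) combined with McKay's asymptotic count of eulerian tournaments (Theorem~\ref{count}). The key bridge is Proposition~\ref{eul}: on exactly $n=2k+1$ vertices, being $k$-arc-strong is the same as being eulerian. So the task reduces to showing that a random-looking tournament on $n=2k+1$ vertices cannot be turned into an eulerian one using too few inversions.

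Suppose, for contradiction, that every tournament $T$ on $n=2k+1$ labeled vertices satisfies $\sinv'_k(T) \leq t$. By Proposition~\ref{eul}, each such $T$ is obtainable from some eulerian tournament by applying a sequence of at most $t$ inversions; padding with empty sets if necessary, we may assume exactly $t$. A sequence of $t$ inversions is specified by a sequence of $t$ subsets of $V(T)$, so there are at most $(2^n)^t = 2^{nt}$ such sequences. Hence the total number of labeled tournaments on $n$ vertices satisfies
\begin{equation*}
2^{\binom{n}{2}} \ \leq\ E(n)\cdot 2^{nt},
\end{equation*}
where $E(n)$ is the number of labeled eulerian tournaments on $n$ vertices.

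I then substitute McKay's formula (Theorem~\ref{count}):
\begin{equation*}
\log E(n) \ =\ \tfrac{n-1}{2}\bigl(n+1 - \log(\pi n)\bigr) + \tfrac{1}{2}\log(n/\mathrm{e}) + o(1).
\end{equation*}
Using $\tfrac{n^2-1}{2} = \binom{n}{2} + \tfrac{n-1}{2}$, this rewrites as $\log E(n) = \binom{n}{2} - \tfrac{n-1}{2}\bigl(\log(\pi n) - 1\bigr) + O(\log n)$. Plugging into the displayed inequality and dividing by $n$ yields
\begin{equation*}
t \ \geq\ \frac{n-1}{2n}\bigl(\log n + \log\pi - 1\bigr) - O\!\left(\tfrac{\log n}{n}\right).
\end{equation*}
Since $\log\pi - 1 > 0$ is a positive constant while the correction terms $\tfrac{\log n}{2n}$ and $O(\log n/n)$ tend to $0$, for all sufficiently large $n$ (equivalently, all sufficiently large $k$) we have $t > \tfrac{1}{2}\log n = \tfrac{1}{2}\log(2k+1)$. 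This contradicts our assumption, so some tournament on $2k+1$ vertices has $\sinv'_k(T) \geq \tfrac{1}{2}\log(2k+1)$, i.e.\ $m'_k(2k+1) \geq \tfrac{1}{2}\log(2k+1)$.

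The only delicate point is making the asymptotic bookkeeping from Theorem~\ref{count} give a clean constant at the end; the important observation is simply that $\log\pi > 1$, which leaves a positive slack over the naive bound $\tfrac{1}{2}\log n$ and absorbs the $O(\log n/n)$ error. Everything else is routine: the counting bound is coarse (we ignore that many inversion sequences yield the same tournament), but it is already strong enough because the exponent $nt$ has to compete with the quadratic gap $\binom{n}{2}-\log E(n) = \Theta(n\log n)$.
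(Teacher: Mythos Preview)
Your proof is correct and follows essentially the same approach as the paper: both use Proposition~\ref{eul} to identify $k$-arc-strong tournaments on $2k+1$ vertices with eulerian ones, bound the number of tournaments reachable from an eulerian tournament by $t$ inversions by $E(n)\cdot 2^{nt}$, and then use McKay's formula (Theorem~\ref{count}) together with the fact that $\log_2\pi>1$ to force $t>\tfrac{1}{2}\log n$ for large $n$. The only cosmetic difference is that the paper carries out the estimate with explicit inequalities rather than $O(\cdot)$ notation.
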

\begin{proof}
    By Theorem ~\ref{count}, we may choose an integer $k_0$ such that for all $k \geq k_0$ and for $n=2k+1$, the number of labelled eulerian tournaments on $n$ vertices is at most $\left(\frac{2^{n+1}}{\pi n}\right)^{\frac{n-1}{2}}\sqrt{n}$ and $\frac{n-1}{2}(\log \pi -1)>\log n$. Now fix some $k\geq k_0$ and let $n=2k+1$.  By Proposition~\ref{eul}, all the $k$-arc-strong tournaments on $n$ vertices are eulerian. 
    For two labelled tournaments $T,T'$ on the same vertex set, we say that $T'$ is {\bf reachable} from $T$ by $t$ inversions for some positive integer $t$, if there is a family of sets $(X_1,\ldots,X_t)$ such that $\Inv(T;X_1,\ldots,X_t)=T'$. Observe that there are $2^n$ possibilities to choose $X_i$ for $i=1,\ldots,t$, hence the number of tournaments reachable from $T$ by $t$ inversions is at most $(2^n)^t=2^{nt}$. Therefore,
    the number of $n$-vertex labelled tournaments that are reachable by $t$ inversions from an eulerian  one is at most 
    \begin{eqnarray*}
    2^{nt}\left(\frac{2^{n+1}}{\pi n}\right)^{\frac{n-1}{2}}\sqrt{n}
    & = &2^{\binom{n}{2}}\cdot 2^{nt} \left(\frac{2}{\pi n}\right)^{\frac{n-1}{2}}\sqrt{n} \\
    & = &2^{\binom{n}{2}}\cdot 2^{nt +\frac{n-1}{2}(1 -\log(\pi)-\log n)+\frac{1}{2}\log n}\\
    & < &2^{\binom{n}{2}}\cdot 2^{nt-\frac{1}{2} n \log n}
    \end{eqnarray*}
    If $t \leq \frac{1}{2}\log n$, then $nt - \frac{1}{2} n \log n \leq 1$, so the number of such tournaments is fewer than $2^{\binom{n}{2}}$. It follows that there is at least one tournament $T^*$ on $2k+1$ vertices which cannot be reached from an eulerian tournament by at most $t$ inversions. Thus $\sinv'_k(T^*)>t$.
\end{proof}

Theorem~\ref{thm:m(2k+1)} can be slightly generalised to tournaments on $2k+c$ vertices for small integers $c$. To do so, we will need the following generalisation of Theorem~\ref{count}.

For a positive integer $n$ and a collection of integers $\alpha_1,\dots ,\alpha_n$, we denote by
$NT(n;\alpha_1, \dots,\alpha_n)$ the number of labelled tournaments
on $[n]$ in which the vertex $i$ satisfies $d^+(i)-d^-(i)=\alpha_i$.

\begin{theorem}[Spencer~\cite{Spencer74} and McKay~\cite{McK90}]\label{thm:spencer}
Let $c$ be an integer and let $\epsilon > 0$.
There is an integer $n_0$ such that for every $n \geq n_0$,
for every $\alpha_1, \dots, \alpha_n \in [-c,c]$, we have
\[
NT(n,\alpha_1, \dots,\alpha_n) \leq
\left(\frac{2^{n+1}}{\pi n}\right)^{(n-1)/2} \sqrt{\frac{n}{\e}} \exp\left(-\frac{1-\epsilon}{2n}\sum_{i=1}^n\alpha_i^2\right).
\]
\end{theorem}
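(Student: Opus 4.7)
The plan is to apply the characteristic-function (Fourier) representation and then do a saddle-point analysis on the resulting integral. To set things up, observe that a uniformly random labelled tournament is encoded by independent $\pm 1$ variables $Y_{jk}$ for $j<k$ (with $Y_{kj}=-Y_{jk}$), so that $d^+(i)-d^-(i)=\sum_{j\neq i}Y_{ij}$. Using $\mathbb{E}[\prod_i e^{\mathrm{i}\theta_i\sum_{j\neq i}Y_{ij}}]=\prod_{j<k}\cos(\theta_j-\theta_k)$, standard Fourier inversion gives
\[
NT(n;\alpha_1,\dots,\alpha_n)=\frac{2^{\binom{n}{2}}}{(2\pi)^n}\int_{[-\pi,\pi]^n} e^{-\mathrm{i}\sum_k\theta_k\alpha_k}\prod_{j<k}\cos(\theta_j-\theta_k)\,d\theta.
\]

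Next, I would exploit the translation invariance $\theta\mapsto\theta+\lambda\mathbf{1}$ of the cosine product to reduce the integral to the hyperplane $H=\{\sum_k\theta_k=0\}$ (which forces $\sum_k\alpha_k=0$, as expected). On $H$, using $\cos x=\exp(-x^2/2+O(x^4))$ and the identity $\sum_{j<k}(\theta_j-\theta_k)^2=n\sum_k\theta_k^2$ valid on $H$, the dominant contribution becomes
\[
\exp\!\Bigl(-\tfrac{n}{2}\|\theta\|^2-\mathrm{i}\langle\theta,\alpha\rangle\Bigr).
\]
Completing the square shifts the saddle to $\theta_k=\mathrm{i}\alpha_k/n$, a contour shift of magnitude $O(1/n)$, and extracts the factor $\exp(-\tfrac{1}{2n}\sum_k\alpha_k^2)$. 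The remaining $(n-1)$-dimensional Gaussian integral contributes $(2\pi/n)^{(n-1)/2}$, which combines with $2^{\binom{n}{2}}$ to yield the leading order $(2^{n+1}/(\pi n))^{(n-1)/2}\sqrt{n/\e}$; the $\sqrt{n/\e}$ prefactor captures the next-order correction from the quartic term in $\log\cos$, in the classical McKay fashion, and already appears in the $\alpha=0$ case (Theorem~\ref{count}).

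The main obstacle will be the error analysis needed to make the bound genuinely uniform in $\alpha_1,\dots,\alpha_n\in[-c,c]$ and to justify the $(1-\epsilon)$ tightening in the exponent. I would split the fundamental domain into three regions: (a) a ball of radius $O(n^{-1/2}\log n)$ around the diagonal, where Taylor remainders are negligible; (b) a wider moderate region, where the quartic error from $\log\cos$ has to be shown to be dominated by the Gaussian decay; and (c) the complement, where the secondary critical points (those with some $\theta_j-\theta_k\equiv\pi$) live. In region (c), one uses the fact that $|\cos|<1$ strictly away from $0$ and $\pi$ to obtain exponential-in-$n$ savings, together with a combinatorial accounting of how many pairs $(j,k)$ can simultaneously be near a secondary peak. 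Since the $\alpha_i$ are uniformly bounded, the saddle shift is $O(1/n)$ and stays inside region (a); the $(1-\epsilon)$ factor in the statement is there precisely to swallow the sub-leading errors from (a) and the exponentially small contributions from (b)--(c), provided $n\geq n_0(c,\epsilon)$. This is the route of Spencer~\cite{Spencer74} and McKay~\cite{McK90}; the substantive step is the uniformity in $(\alpha_i)$, which is ensured by the boundedness assumption $\alpha_i\in[-c,c]$.
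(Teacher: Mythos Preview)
The paper does not prove this theorem; it is quoted from Spencer~\cite{Spencer74} and McKay~\cite{McK90} and used as a black box (to derive Corollary~\ref{corollary:number_tournaments_high_degree} and then Theorem~\ref{thm:2k+c}). There is therefore no proof in the paper to compare your attempt against.

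That said, your sketch is the correct outline of the method in those references: the Fourier representation of $NT(n;\alpha)$, the reduction to the hyperplane $\sum_k\theta_k=0$, the Gaussian approximation of $\prod_{j<k}\cos(\theta_j-\theta_k)$ via $\sum_{j<k}(\theta_j-\theta_k)^2=n\|\theta\|^2$, the saddle shift by $\mathrm{i}\alpha/n$ extracting $\exp(-\tfrac{1}{2n}\sum_k\alpha_k^2)$, and the region decomposition to control tails and secondary critical points. The boundedness hypothesis $|\alpha_i|\le c$ is exactly what keeps the saddle inside the region where the quartic remainder is negligible, and the $(1-\epsilon)$ slack absorbs the lower-order terms uniformly for $n\ge n_0(c,\epsilon)$. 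If you intend to include a self-contained proof, the honest work lies in region~(c): bounding the contribution of configurations where many differences $\theta_j-\theta_k$ are near $\pi$ requires a careful counting argument (McKay handles this via a graph-theoretic decomposition of the index set), and your one-sentence treatment there would need to be expanded substantially.
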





\begin{corollary}\label{corollary:number_tournaments_high_degree}
For all integers $k$ and $c$, the number of labelled tournaments $T$ on $n=2k+c$ vertices
such that every vertex has in- and out-degree at least $k$ is at most
$2^{\log(2c)n}\left(\frac{2^{n+1}}{\pi n}\right)^{(n-1)/2} \sqrt{\frac{n}{\e}}$ for $k$ large enough.
\end{corollary}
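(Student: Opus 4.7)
The plan is to apply Theorem~\ref{thm:spencer} directly, parametrizing tournaments by their out-degree/in-degree imbalances. For a tournament $T$ on $n=2k+c$ vertices and a vertex $i$, set $\alpha_i = d_T^+(i) - d_T^-(i)$. Since $d_T^+(i)+d_T^-(i)=n-1$, the requirement that both $d_T^+(i)\geq k$ and $d_T^-(i)\geq k$ translates exactly into $|\alpha_i|\leq c-1$, so every such tournament has all $\alpha_i$ in the interval $[-c,c]$ needed by Theorem~\ref{thm:spencer}.

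Next, I would fix any $\epsilon\in(0,1)$, say $\epsilon=\tfrac12$, and let $n_0$ be the threshold given by Theorem~\ref{thm:spencer} for this $\epsilon$ and this $c$. For any $n\geq n_0$ and any valid tuple $(\alpha_1,\dots,\alpha_n)\in[-(c-1),c-1]^n$, the theorem gives
\[
NT(n;\alpha_1,\dots,\alpha_n)\;\leq\;\left(\frac{2^{n+1}}{\pi n}\right)^{(n-1)/2}\sqrt{\frac{n}{\e}}\,\exp\!\left(-\frac{1-\epsilon}{2n}\sum_{i=1}^n\alpha_i^2\right)\;\leq\;\left(\frac{2^{n+1}}{\pi n}\right)^{(n-1)/2}\sqrt{\frac{n}{\e}},
\]
since the exponential factor is at most $1$.

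Then I would simply sum over all admissible tuples $(\alpha_1,\dots,\alpha_n)$. There are at most $(2c-1)^n\leq(2c)^n=2^{\log(2c)\,n}$ such tuples, so the total number of tournaments with minimum in- and out-degree at least $k$ is bounded by
\[
2^{\log(2c)\,n}\left(\frac{2^{n+1}}{\pi n}\right)^{(n-1)/2}\sqrt{\frac{n}{\e}},
\]
which is exactly the claimed bound, valid whenever $k$ is large enough that $n=2k+c\geq n_0$. There is no real obstacle here: the only mild subtlety is making sure to invoke Theorem~\ref{thm:spencer} with some fixed $\epsilon<1$ (so the exponential factor can be discarded uniformly) and absorbing the loss $(2c-1)^n\leq(2c)^n$ into the factor $2^{\log(2c)n}$.
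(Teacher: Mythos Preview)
Your proof is correct and essentially identical to the paper's: both translate the degree condition into $|\alpha_i|\leq c-1$, apply Theorem~\ref{thm:spencer}, drop the exponential factor, and sum over the at most $(2c-1)^n\leq 2^{\log(2c)n}$ admissible tuples. The only cosmetic difference is that the paper takes $\epsilon=1$ (making the exponential factor exactly~$1$) whereas you take $\epsilon=\tfrac12$ and bound it by~$1$; either choice works.
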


\begin{proof}
By Theorem~\ref{thm:spencer} for $\epsilon = 1$,
for $k$ large enough,
the number of labelled tournaments $T$ on $n=2k+c$ vertices
such that every vertex has in- and out-degree at least $k$ is at most

\[
\left(\frac{2^{n+1}}{\pi n}\right)^{(n-1)/2} \sqrt{\frac{n}{\e}}
\sum_{(\alpha_1, \dots,\alpha_n) \in \{-c+1, \dots, c-1\}^n} 1,
\]
which is
\[
\left(\frac{2^{n+1}}{\pi n}\right)^{(n-1)/2} \sqrt{\frac{n}{\e}}
(2c-1)^n
\]
and the corollary follows from the fact that $(2c-1)^n \leq 2^{\log(2c)n}$.
\end{proof}


\begin{theorem}\label{thm:2k+c}
    For every fixed positive integer $c$, and for every positive integer $k$ large enough compared to $c$, there exists a tournament
    $T$ on $2k+c$ vertices such that $\sinv'_k(T) > \frac{1}{2}\log (2k+c)-\log(2c)$.
    In particular, $m'_k(2k+c)$ is unbounded for every fixed $c$.
\end{theorem}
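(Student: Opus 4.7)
The plan is to extend the counting argument used to prove Theorem~\ref{thm:m(2k+1)}, replacing the sharper count from Theorem~\ref{count} by the more flexible bound of Corollary~\ref{corollary:number_tournaments_high_degree}. Set $n = 2k+c$. Every $k$-arc-strong tournament on $n$ vertices has minimum in- and out-degree at least $k$, so each degree imbalance $d^+(v) - d^-(v)$ lies in $[-(c-1),c-1]$. Applying Corollary~\ref{corollary:number_tournaments_high_degree} therefore yields an upper bound
\[
N_0 \;=\; 2^{\log(2c)\,n}\left(\frac{2^{n+1}}{\pi n}\right)^{(n-1)/2}\sqrt{\frac{n}{\e}}
\]
on the number of labelled $k$-arc-strong tournaments on $[n]$, valid for $k$ large enough compared to $c$.

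As in the proof of Theorem~\ref{thm:m(2k+1)}, from any fixed tournament at most $(2^n)^t = 2^{nt}$ distinct labelled tournaments on $[n]$ are reachable by at most $t$ inversions, since each inversion is determined by one of the $2^n$ subsets of $[n]$. Consequently, the number of $n$-vertex labelled tournaments $T$ with $\sinv'_k(T) \leq t$ is at most $2^{nt} N_0$. I would then take $t = \lfloor \tfrac{1}{2}\log n - \log(2c) \rfloor$ and verify by direct computation that $2^{nt} N_0 < 2^{\binom{n}{2}}$ for $n$ large enough. After taking logarithms and using that $\binom{n}{2} - \tfrac{n-1}{2}(n+1) = -\tfrac{n-1}{2}$, the inequality reduces to
\[
nt + n\log(2c) \;<\; \frac{n-1}{2}\bigl(\log(\pi n) - 1\bigr) - \frac{1}{2}\log\frac{n}{\e}.
\]
Since the left-hand side is bounded by $\tfrac{n}{2}\log n$ while the right-hand side is asymptotic to $\tfrac{n}{2}\bigl(\log n + \log(\pi/2)\bigr)$, the gap is of order $n\log(\pi/2) > 0$, so the inequality holds for $n$ large enough.

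A pigeonhole argument then produces a tournament $T^*$ on $2k+c$ vertices with $\sinv'_k(T^*) > t$, hence $\sinv'_k(T^*) \geq t+1 > \tfrac{1}{2}\log(2k+c) - \log(2c)$, which is exactly the desired bound. The ``in particular'' statement is then immediate, since for fixed $c$ the quantity $\tfrac{1}{2}\log(2k+c) - \log(2c)$ tends to infinity with $k$. The only non-routine ingredient is Corollary~\ref{corollary:number_tournaments_high_degree}, which is already at hand: the rest is the same style of asymptotic estimate as in the proof of Theorem~\ref{thm:m(2k+1)}, with the extra additive term $\log(2c)\,n$ in $N_0$ being absorbed by shifting $t$ down by $\log(2c)$.
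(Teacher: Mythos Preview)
Your proposal is correct and follows essentially the same approach as the paper: bound the number of $k$-arc-strong tournaments on $n=2k+c$ vertices via Corollary~\ref{corollary:number_tournaments_high_degree}, multiply by $2^{nt}$ for the tournaments reachable in $t$ inversions, and compare with $2^{\binom{n}{2}}$. The paper carries out the same estimate slightly more tersely, absorbing the asymptotic check into a single displayed inequality rather than separating out the $\frac{n}{2}\log(\pi/2)$ gap as you do, but the argument is the same.
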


\begin{proof}
By Corollary~\ref{corollary:number_tournaments_high_degree},
the number of labelled tournaments $T$ on $n=2k+c$ vertices with $\sinv'_k(T) \leq t$
is for $k$ large enough at most 
\[
2^{nt} 2^{\log(2c)n}\left(\frac{2^{n+1}}{\pi n}\right)^{(n-1)/2} \sqrt{\frac{n}{\e}}
< 2^{\binom{n}{2}} 2^{nt-\frac{1}{2}n\log n+\log(2c)n}
\]
which is at most $2^{\binom{n}{2}}$ if $t \leq \frac{1}{2}\log n -\log(2c)$.
Hence there exists a tournament $T^*$ with $\sinv'_k(T^*) > \frac{1}{2}\log (2k+c)-\log(2c)$.
\end{proof}

\subsection{Upper bounds on \texorpdfstring{$M_k$}{Mk} and transitive tournaments}\label{sec:53}
In this section, we give some upper bounds on $M_k$ for all positive integers $k$. Most notably, we prove the upper bound in Theorem~\ref{thm:exttournoi}.

\begin{theorem}\label{thm:M<2k}
$M_k \leq 2k$.
\end{theorem}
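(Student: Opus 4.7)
The plan is to prove $M_k \leq 2k$ by induction on $n = |V(T)| \geq 2k+1$, using Proposition~\ref{4k-2} and Proposition~\ref{lem:kstrong+} as the main tools.

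The inductive step is clean when $n \geq 4k-1$: Proposition~\ref{4k-2} yields a vertex $v \in V(T)$ with $\min(d_T^+(v), d_T^-(v)) \geq k$, and the induction hypothesis applied to $T-v$ (on $n-1 \geq 4k-2 \geq 2k+1$ vertices) yields a family $\mathcal{X}$ of at most $2k$ subsets of $V(T-v)$ such that $\Inv(T-v; \mathcal{X})$ is $k$-strong. Viewing $\mathcal{X}$ as a family of subsets of $V(T)$, none of its members contains $v$, so the in- and out-neighborhoods of $v$ are unchanged. Proposition~\ref{lem:kstrong+} then certifies that $\Inv(T; \mathcal{X})$ is $k$-strong.

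In the range $2k+2 \leq n \leq 4k-2$, where Proposition~\ref{4k-2} no longer applies directly, if $T$ still contains a vertex $v$ with $\min(d_T^+(v), d_T^-(v)) \geq k$, the same reduction works (since $n-1 \geq 2k+1$). Otherwise every vertex lies in $A := \{v : d_T^+(v) < k\}$ or $B := \{v : d_T^-(v) < k\}$; since $d_T^+(v) + d_T^-(v) = n-1 \geq 2k$, the sets $A$ and $B$ are disjoint and partition $V(T)$. A double-counting argument (the total out-degree from $A$ is at most $|A|(k-1)$ yet is at least $\binom{|A|}{2}$) gives $|A|, |B| \leq 2k-1$. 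The plan for this "extremal bipartite-like" regime is to construct at most $k$ inversions that boost the out-degrees of vertices in $A$ above $k$, and at most $k$ inversions boosting the in-degrees of vertices in $B$, then invoke Proposition~\ref{lem:kstrong+} on an appropriate $k$-strong core.

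The main obstacle is the innermost case $n = 2k+1$, where by Proposition~\ref{eul} any $k$-strong tournament is necessarily Eulerian (every vertex has in- and out-degree exactly $k$). Here we must transform an arbitrary tournament $T$ on $2k+1$ vertices into a $k$-strong Eulerian tournament using at most $2k$ inversions. Starting from a Hamiltonian path $v_1, \ldots, v_{2k+1}$ (guaranteed by Redei's theorem), the plan is to construct explicit nested or structured inversions that simultaneously balance all in/out-degrees, generalizing the pair $\{v_1,v_2,v_4,v_5\}, \{v_1,v_5\}$ used to convert $TT_5$ into $R_5$ in the proof of $M_2 \leq 2$. Ensuring such a construction yields not merely $k$-arc-strong but genuinely $k$-strong output (i.e., survives the deletion of any $k-1$ vertices) is the most delicate combinatorial point; it is here that the factor $2k$ appears to be exploited, presumably by splitting the $2k$ inversions into $k$ that handle out-degree deficiencies and $k$ that handle in-degree deficiencies.
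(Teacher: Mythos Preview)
Your proposal has a genuine gap: the base case $n=2k+1$ and the intermediate range $2k+2 \leq n \leq 4k-2$ are left as unexecuted ``plans'', and these are precisely where all the work would lie. For $n=2k+1$ you propose to generalise the two explicit inversions that convert $TT_5$ into $R_5$, but no construction is given, and it is not at all clear how a Hamiltonian-path-based scheme would yield a \emph{$k$-strong} (rather than merely $k$-arc-strong) tournament for general $k$. Likewise, in the range $2k+2 \leq n \leq 4k-2$ you correctly identify the partition $V(T)=A\cup B$, but ``$k$ inversions boosting out-degrees of $A$ plus $k$ inversions boosting in-degrees of $B$'' is a slogan, not a proof: simultaneously raising the out-degree of every vertex of $A$ with only $k$ inversions, while not wrecking other degrees, is exactly the unsolved core of the problem.

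The paper's proof avoids induction entirely with a short direct construction that handles all $n\geq 2k+1$ at once. Fix an arbitrary $k$-strong tournament $T'$ on $\{v_1,\ldots,v_{2k+1}\}\subseteq V(T)$. For $i=1,\ldots,2k$ in this order, invert $X_i=\{v_i\}\cup A_i\cup B_i$, where $A_i\subseteq\{v_{i+1},\ldots,v_{2k+1}\}$ is the set of $v_j$ whose current arc with $v_i$ disagrees with $T'$, and $B_i\subseteq\{v_{2k+2},\ldots,v_n\}$ is the set of $v_j$ whose current arc with $v_i$ points the wrong way (out of $v_i$ if $i\leq k$, into $v_i$ if $i\geq k+1$). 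Since $v_1,\ldots,v_{i-1}\notin X_i$, no arc incident to an already-processed vertex is disturbed; hence after the $2k$ inversions the subtournament on $\{v_1,\ldots,v_{2k+1}\}$ equals $T'$, and every $v_j$ with $j\geq 2k+2$ has $v_1,\ldots,v_k$ as out-neighbours and $v_{k+1},\ldots,v_{2k}$ as in-neighbours, so Proposition~\ref{lem:kstrong+} finishes. The idea you are missing is this sequential ``one vertex at a time'' inversion scheme, which makes the base case trivial rather than the crux.
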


\begin{proof}
Let $T$ be a tournament with $V(T)=\{v_1,\ldots,v_n\}$ with $n \geq 2k+1$. Further, let $T'$ be a $k$-strong tournament on $\{v_1,\ldots,v_{2k+1}\}$. We now define sets $X_1,\ldots,X_{2k}$. 
Suppose that the sets $X_1,\ldots,X_{i-1}$ have already been created and let $T_{i-1}$ be the graph obtained from $T$ by inverting $X_1,\ldots,X_{i-1}$.
Now let $X_i = \{v_i\} \cup A_i \cup B_i$, where
$A_i$ is the set of vertices $v_j$ with $j \in \{i+1,\ldots,2k+1\}$ for which the edge $v_iv_j$ has a different orientation in $T'$ and $T_{i-1}$,
and $B_i$ is, when $i \leq k$ (resp. $i \geq k+1$), the set of vertices $v_j$ with $j \geq 2k+2$ for which $T_{i-1}$ contains the arc $v_iv_j$ (resp. $v_jv_i$).

We still need to show that $T_{2k}$ is $k$-strong. Observe that $T_{2k}\langle\{v_1,\ldots,v_{2k+1}\}\rangle=T'$ which is $k$-strong by assumption. Moreover, for any $j\geq 2k+2$, $T_{2k}$ contains the arcs $v_jv_i$ for $i=1,\ldots,k$ and the arcs $v_iv_j$ for $i=k+1,\ldots,2k$. Hence, by Lemma~\ref{lem:kstrong+}, $T_{2k}$ is $k$-strong.
\end{proof}

Theorems~\ref{thm:m(2k+1)} and~\ref{thm:M<2k} directly imply Theorem~\ref{thm:exttournoi}.

It is tempting to improve the upper bound in Theorem~\ref{thm:exttournoi}. While we are not able to provide an improvement for the general case, we show in the following that a small improvement can be achieved in a seemingly critical case, namely when the size of the tournament is exactly $2k+1$. We first need the following result.

\begin{theorem}\label{thm:TTn}
    Let $n,k$ be integers with $n \geq 2k+1$.
    \[
    \sinv_k(TT_n) = \sinv'_k(TT_n) =
    \left\{
    \begin{array}{l l}
        2 & \text{ if~~} 2k+1 \leq n < 3k \\
        1 & \text{ if~~} 3k \leq n. \\
    \end{array}
    \right.
    \]
\end{theorem}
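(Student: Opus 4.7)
Throughout, write $A = \{v_1, \ldots, v_k\}$, $B = \{v_{n-k+1}, \ldots, v_n\}$, and $M = \{v_{k+1}, \ldots, v_{n-k}\}$, so $TT_n$ has the arc pattern $A \Rightarrow M \Rightarrow B$ with each of $A$, $M$, $B$ transitive. The plan is to prove the lower bound first, then the two upper-bound cases.

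For the lower bound, since $TT_n$ is acyclic, $\sinv'_k(TT_n) \geq 1$; it suffices to show $\sinv'_k(TT_n) \geq 2$ whenever $2k+1 \leq n < 3k$ (as $\sinv_k \geq \sinv'_k$). Suppose to the contrary that a single set $X$ suffices, i.e., $T' := \Inv(TT_n, X)$ is $k$-arc-strong. For every $v_i \notin X$ the degrees are unchanged, so $n - i \geq k$ and $i - 1 \geq k$, forcing $A \cup B \subseteq X$ and $|X| \geq 2k$. On the other hand, $v_1 \in X$ has no vertex of $X$ below it in the index order and $|X|-1$ above, so its new out-degree in $T'$ is $(n-1) - (|X|-1) = n - |X|$, which must be at least $k$; hence $|X| \leq n - k$. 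Combined, $2k \leq n - k$, i.e., $n \geq 3k$, contradicting the assumption.

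For the upper bound when $n \geq 3k$, invert the single set $X := A \cup B$. In $T'$, the intra-$A$ and intra-$B$ arcs are reversed (so each of $A, B$ is reverse-transitive), $A \Rightarrow M$ and $M \Rightarrow B$ are preserved, and all $A$-to-$B$ arcs become $B \Rightarrow A$. For any $S \subseteq V(TT_n)$ with $|S| \leq k-1$, since $|A| = |B| = k$ and $|M| = n - 2k \geq k$, each of $A \setminus S$, $M \setminus S$, $B \setminus S$ is nonempty, and the ``cycle of sets'' pattern $(A \setminus S) \Rightarrow (M \setminus S) \Rightarrow (B \setminus S) \Rightarrow (A \setminus S)$ makes $T' - S$ strong. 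Thus $\sinv_k(TT_n) = 1$.

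For the upper bound when $2k+1 \leq n < 3k$, I would exhibit two inversions $X_1, X_2$ explicitly. The natural starting point is $X_1 = A \cup B$, producing the $A \Rightarrow M \Rightarrow B \Rightarrow A$ cyclic structure but failing $k$-strength since $|M| \leq k-1$ allows removing all of $M$, destroying any path from $A$ to $B$. The second inversion $X_2$ is designed to install enough $A$-to-$B$ ``bypass'' arcs to restore strong connectivity after any such deletion; concretely, taking $X_2 \subseteq A \cup B$ with $|X_2 \cap A|, |X_2 \cap B| \geq 1$ flips (in $\Inv(TT_n, X_1)$) the corresponding $B$-to-$A$ arcs back to $A$-to-$B$, and the four-way partition $V = V_{00} \sqcup V_{01} \sqcup V_{10} \sqcup V_{11}$ induced by membership in $(X_1, X_2)$ encodes the net flipped-edge set as the symmetric difference $E(K_{X_1}) \triangle E(K_{X_2})$. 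Verification then splits on $|S \cap M|$: if $M \setminus S \neq \emptyset$ the cycle argument from the preceding paragraph applies, whereas $M \subseteq S$ is precisely the case the bypass arcs are designed to handle, checkable by a short direct argument or a final application of Proposition~\ref{lem:kstrong+}. The main obstacle is choosing $X_2$ uniformly across the range $2k+1 \leq n < 3k$ so that every vertex keeps both in- and out-degree at least $k$ after the two inversions, while simultaneously producing enough bypass arcs that strong connectivity survives all deletions of size $k-1$ containing $M$.
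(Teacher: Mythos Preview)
Your lower-bound argument and the $n \geq 3k$ upper bound are both correct and essentially match the paper's proofs (the paper inverts $\{v_1,\ldots,v_k\} \cup \{v_{2k+1},\ldots,v_n\}$ rather than your $A \cup B$, but the resulting cycle-of-three-sets structure and verification are the same).

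The genuine gap is in the range $2k+1 \leq n < 3k$. You outline a strategy---start with $X_1 = A \cup B$ and then search for a second set $X_2$ to install ``bypass'' $A\to B$ arcs---but you never specify $X_2$, and you explicitly identify choosing it uniformly as the ``main obstacle''. As written, the upper bound $\sinv_k(TT_n) \leq 2$ is not established in this range; the four-way partition discussion is a description of what needs to be checked, not a check. Your approach may be completable, but it is not clear how, and the degree constraints plus the requirement to survive every deletion containing $M$ interact in a way that resists a clean uniform choice.

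The paper sidesteps this by a construction that works for \emph{all} $n \geq 2k+1$ simultaneously and has nothing to do with $A$, $M$, $B$: invert the set of even-indexed vertices and then the set of odd-indexed vertices. The net effect is to reverse the induced subtournament on each parity class while leaving every arc between an even-indexed and an odd-indexed vertex unchanged. Given any $Y$ with $|Y|\leq k-1$, let $i_0,j_0$ (resp.\ $i_1,j_1$) be the smallest and largest even (resp.\ odd) indices surviving $Y$; since $|Y|<k$ and $n\geq 2k+1$ one has $j_0>i_1$ and $j_1>i_0$, so the original arcs $v_{i_1}v_{j_0}$ and $v_{i_0}v_{j_1}$ are present in $T'$. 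The reversed-transitive subtournaments on the surviving evens and odds provide Hamiltonian paths $v_{j_0}\to v_{i_0}$ and $v_{j_1}\to v_{i_1}$, and concatenating with the two cross arcs gives a Hamiltonian cycle in $T'-Y$. This parity trick is the missing idea.
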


\begin{proof}
We first prove that $\sinv_k(n)\leq 2$.    Let $(v_1, \dots, v_n)$ be the unique ordering of $V(TT_n)$ such that
    $v_i v_j \in A(TT_n)$ for every $i<j$.
    Let $X_0 = \{v_i \mid i \text{ even}\}$ and $X_1 = \{v_i \mid i \text{ odd}\}$.
    We claim that $T' = \Inv(TT_n; X_0,X_1)$ is $k$-strong.
    Consider  a set $Y$ of at most $k-1$ vertices of $T'$.
    Let $i_0$ (resp. $j_0$) be the smallest (resp. largest) even integer $\ell$ such that $v_\ell \not\in Y$, and let
    $i_1$ (resp. $j_1$) be the smallest (resp. largest) odd integer $\ell$ such that $v_\ell \not\in Y$.
    Observe that $j_0 > i_1$ and $j_1 > i_0$ since $|Y|<k$ and $n \geq 2k+1$.
    In particular, $v_{i_1}v_{j_0}, v_{i_0}v_{j_1} \in A(T')$.
    Since $T'\langle X_0 \rangle-Y$ (resp. $T'\langle X_1\rangle-Y$) is a transitive tournament with source $v_{j_0}$ and sink $v_{i_0}$ (resp. source $v_{j_1}$ and sink $v_{i_1}$), there are hamiltonian paths $P_0$ from $v_{j_0}$ to $v_{i_0}$ in $T'\langle X_0 \rangle-Y$ and $P_1$ from $v_{j_1}$ to $v_{i_1}$ in $T\langle X_1 \rangle-Y$.
    Finally $P_0v_{i_0}v_{j_1}P_1v_{i_1}v_{j_0}$ is an hamiltonian directed cycle of $T'-Y$, and so $T'-Y$ is strong.
    Hence $T'$ is $k$-strong, and $\sinv_k(TT_n) \leq 2$.

    Let us now prove that $\sinv'_k(TT_n) \geq 2$ if $n \leq 3k-1$.
    Suppose for a contradiction that there exists $X \subseteq V(TT_n)$ such that $T'=\Inv(TT_n;X)$ is $k$-arc-strong.
    Then, for every $i \leq k$, since $v_i$ has in-degree $i-1$ in $TT_n$,  we have $v_i \in X$.
    Similarly, for every $i \geq n-k+1$, since $v_i$ has out-degree $n-i$ in $TT_n$, we have $v_i \in X$.
    But then every out-neighbour of $v_1$ in $T'$ is in $\{v_{k+1}, \dots, v_{n-k}\}$ and so $d^+_{T'}(v_1) \leq n-2k < k$.
    Hence $T'$ is not $k$-arc-strong, a contradiction.

    Finally, let us show that $\sinv_k(TT_n) \leq 1$ if $n \geq 3k$.
    Let $(v_1, \dots, v_n)$ be the unique ordering of $V(TT_n)$ such that
    $v_i v_j \in A(TT_n)$ for every $i<j$.
    Let $A = \{v_1, \dots, v_k\}, B= \{v_{k+1},\dots, v_{2k}\}$ and $C = \{v_{2k+1}, \dots, v_n\}$.
    In $T' = \Inv(TT_n; A\cup C)$, we have $A \Rightarrow B, B \Rightarrow C$ and $C \Rightarrow A$. Since $|A|,|B|,|C| \geq k$, $T'$ is $k$-strong.
\end{proof}

As mentioned in the introduction, it was proven independently by Alon et al. \cite{APSSW} and Aubian et al. \cite{inversion} that $\inv(n) \leq n - \lceil \log (n+1) \rceil$. Hence , every tournament of order $2k+1$ can be made acyclic in at most $2k - \lceil \log (k+1) \rceil$ inversions, and $k$-strong in two more inversions by Theorem~\ref{thm:TTn}. Thus we have the following.
\begin{corollary}
$m_k(2k+1) \leq 2k - \lceil \log (k+1) \rceil +2$.  
\end{corollary}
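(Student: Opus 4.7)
The plan is to combine the known upper bound $\inv(n) \leq n - \lceil \log(n+1) \rceil$ of Alon et al.\ and Aubian et al.\ with Theorem~\ref{thm:TTn} as black boxes. The idea is simply: first invert sets to kill all directed cycles, and then, since the resulting acyclic tournament on $2k+1$ vertices must be $TT_{2k+1}$, apply Theorem~\ref{thm:TTn} to finish.

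More concretely, let $T$ be an arbitrary tournament on $n = 2k+1$ vertices. By the cited bound, there is a decycling family $\mathcal{X}_1$ of at most $n - \lceil \log(n+1) \rceil$ inversions such that $\Inv(T;\mathcal{X}_1)$ is acyclic. Using $\log(2k+2) = 1 + \log(k+1)$ and hence $\lceil \log(2k+2) \rceil = 1 + \lceil \log(k+1) \rceil$, the size of $\mathcal{X}_1$ is at most
\[
2k+1 - \lceil \log(2k+2) \rceil \;=\; 2k - \lceil \log(k+1) \rceil.
\]

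Next, observe that an acyclic tournament on $2k+1$ vertices is isomorphic to $TT_{2k+1}$, so $\Inv(T;\mathcal{X}_1) \cong TT_{2k+1}$. By Theorem~\ref{thm:TTn}, we have $\sinv_k(TT_{2k+1}) \leq 2$, so there is a further family $\mathcal{X}_2$ of at most two inversions that makes $\Inv(T;\mathcal{X}_1)$ into a $k$-strong tournament. Concatenating $\mathcal{X}_1$ and $\mathcal{X}_2$ yields a $k$-strengthening family for $T$ of size at most $2k - \lceil \log(k+1) \rceil + 2$. Since $T$ was arbitrary, this gives the claimed bound on $m_k(2k+1)$.

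There is essentially no obstacle to overcome: the argument is a two-step pipeline that reuses results already stated. The only checks needed are the elementary manipulation of the ceiling (to match the form $2k - \lceil \log(k+1) \rceil + 2$) and the immediate observation that inverting families can be concatenated, which follows directly from the definition of $\Inv(D;\cdot)$.
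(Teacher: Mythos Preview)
Your proof is correct and follows exactly the same approach as the paper: first use the bound $\inv(n)\le n-\lceil\log(n+1)\rceil$ to make the tournament acyclic in at most $2k-\lceil\log(k+1)\rceil$ inversions, then apply Theorem~\ref{thm:TTn} to reach a $k$-strong tournament in two further inversions. The ceiling manipulation you spell out is precisely the one implicit in the paper's one-line justification.
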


\subsection{Upper bound on \texorpdfstring{$M'_k$}{M'k}}\label{sec:54}

This section is dedicated to proving Theorem~\ref{thm:upperM'}. First, we give some auxiliary results in Section~\ref{prelm'}. Next, in Section~\ref{small}, we show the result when restricting to tournaments of size exactly $2k+1$. We finally generalize this to bigger tournaments in Section~\ref{big}.

\subsubsection{Preliminaries}\label{prelm'}
We here collect some auxiliary results we need for the proof of Theorem~\ref{thm:upperM'}.
We first need a basic result combining probability and linear algebra.

\begin{proposition}\label{linalg}
    Let $A\in \mathbb{F}_2^{q \times q'}$ be a matrix whose rank is $q$ for some integers $q,q'$ with $q \leq q'$. Further, let $\vec{v}\in \mathbb{F}_2^q$ be a fixed vector and let another vector $\vec{w}\in \mathbb{F}_2^{q'}$ be drawn uniformly at random. Then $\Pr(A\vec{w}=\vec{v})=2^{-q}$.
\end{proposition}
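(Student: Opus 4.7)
The plan is to exploit the standard rank–nullity theorem over $\mathbb{F}_2$. Since $A$ has rank $q$, the linear map $\varphi \colon \mathbb{F}_2^{q'} \to \mathbb{F}_2^q$ defined by $\varphi(\vec{w}) = A\vec{w}$ is surjective, and its kernel $K = \ker \varphi$ is a linear subspace of $\mathbb{F}_2^{q'}$ of dimension $q' - q$, hence has cardinality $|K| = 2^{q'-q}$.

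Next I would observe that for the fixed target $\vec{v}$, the preimage $\varphi^{-1}(\vec{v}) = \{\vec{w} \in \mathbb{F}_2^{q'} : A\vec{w} = \vec{v}\}$ is non-empty because $\varphi$ is surjective; pick any $\vec{w}_0$ with $\varphi(\vec{w}_0) = \vec{v}$, and then $\varphi^{-1}(\vec{v}) = \vec{w}_0 + K$. In particular $|\varphi^{-1}(\vec{v})| = |K| = 2^{q'-q}$.

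Finally, since $\vec{w}$ is drawn uniformly from $\mathbb{F}_2^{q'}$, which has $2^{q'}$ elements,
\[
\Pr(A\vec{w} = \vec{v}) = \frac{|\varphi^{-1}(\vec{v})|}{2^{q'}} = \frac{2^{q'-q}}{2^{q'}} = 2^{-q}.
\]
There is no real obstacle here; the only thing to be careful about is checking that the preimage is non-empty, which is exactly what the rank assumption $\rk A = q$ guarantees via surjectivity. The entire proof should fit in a few lines.
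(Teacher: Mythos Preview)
Your proof is correct and is the standard rank--nullity argument. The paper itself does not give a proof of this proposition; it is stated there as a basic fact without justification, so there is nothing to compare against.
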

The simple proof of the following result is similar to the one of Theorem~\ref{thm:M<2k} (see~\cite[Lemma~2.1]{havet2024diameter}).
\begin{proposition}\label{fuzzu}
    Let $T_1,T_2$ be tournaments with $V(T_1)=V(T_2)$. Then there is a collection of $|V(T_1)|-1$ subsets $X_1,\ldots,X_{|V(T_1)|-1}$ of $V(T_1)$ such that $\Inv(T_1;X_1,\ldots,X_{|V(T_1)|-1})=T_2$.
\end{proposition}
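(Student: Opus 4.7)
The plan is to fix the edges of $T_1$ to match those of $T_2$ one vertex at a time, in exact analogy with the construction used in the proof of Theorem~\ref{thm:M<2k}. Fix an arbitrary ordering $V(T_1)=V(T_2)=\{v_1,\dots,v_n\}$. I will construct the inversions $X_1,\dots,X_{n-1}$ inductively so that the invariant ``after performing $X_1,\dots,X_i$, the resulting tournament $T_i$ agrees with $T_2$ on every arc incident to $\{v_1,\dots,v_i\}$'' is maintained.

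Given the partial construction, with $T_{i-1}=\Inv(T_1;X_1,\dots,X_{i-1})$, define
\[
A_i=\{v_j\,:\, j>i \text{ and the arc between } v_i \text{ and } v_j \text{ in } T_{i-1} \text{ disagrees with } T_2\},
\]
and set $X_i=\{v_i\}\cup A_i$. Then for each $j>i$, the arc between $v_i$ and $v_j$ lies in $T_{i-1}\langle X_i\rangle$ if and only if $v_j\in A_i$, so it is reversed precisely when it needed to be. Hence after this inversion every arc between $v_i$ and $\{v_{i+1},\dots,v_n\}$ agrees with $T_2$.

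To complete the inductive step, note that $X_i\subseteq\{v_i,v_{i+1},\dots,v_n\}$, so no vertex of $\{v_1,\dots,v_{i-1}\}$ lies in $X_i$; consequently no arc with an endpoint in $\{v_1,\dots,v_{i-1}\}$ lies in $T_{i-1}\langle X_i\rangle$, and all previously fixed arcs remain fixed. Combined with the outcome of the inversion just performed, this yields the invariant for step $i$. Applying this for $i=1,\dots,n-1$ gives a tournament $T_{n-1}$ that agrees with $T_2$ on every arc incident to $\{v_1,\dots,v_{n-1}\}$; since every arc of a tournament on $\{v_1,\dots,v_n\}$ has at least one endpoint in that set, $T_{n-1}=T_2$, as desired.

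The argument is entirely routine once the right invariant is isolated; there is no real obstacle. The main point to verify carefully is that the choice $X_i\subseteq\{v_i,\dots,v_n\}$ guarantees that later inversions never disturb arcs already set correctly, which is exactly why $n-1$ (rather than $n$) inversions suffice.
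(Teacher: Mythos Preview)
Your proof is correct and follows exactly the approach the paper indicates: it is the natural adaptation of the construction in the proof of Theorem~\ref{thm:M<2k}, fixing the arcs incident to each vertex in turn by inverting a set contained in $\{v_i,\dots,v_n\}$. The paper does not spell out the argument (deferring it to~\cite{diameter}), but your write-up is precisely what is intended.
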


We are now ready to give the last preliminary result.
\begin{proposition}\label{extend}
    Let $T$ be a tournament on $2k+1$ vertices and $X \subseteq V(T)$ with $|X|\leq \frac{2}{3}k$ such that $d_T^-(x)=d_T^+(x)=k$ for all $x \in X$. Then there exists a $k$-arc-strong tournament $T'$ on $V(T)$ such that all the edges in $E(\UG(T)\langle X \rangle) \cup \delta_{\UG(T)}(X)$ have the same orientation in $T$ and $T'$.
\end{proposition}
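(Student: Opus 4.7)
The plan is to invoke Proposition~\ref{prop:eul-or} on the mixed graph $H$ obtained from $T$ by forgetting the orientation of every arc lying entirely inside $Y := V(T)\setminus X$, while keeping every arc incident to $X$ oriented as in $T$. Since $\UG(H) = K_{2k+1}$ is eulerian (every vertex has even degree $2k$), any eulerian orientation of $H$ combined with the preserved arcs yields a tournament $T'$ on $V(T)$ in which every vertex has in- and out-degree $k$. Proposition~\ref{eul} then guarantees that $T'$ is $k$-arc-strong, and by construction every edge in $E(\UG(T)\langle X\rangle) \cup \delta_{\UG(T)}(X)$ is oriented identically in $T$ and $T'$. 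Thus the task reduces to verifying the inequality $d_H(S) \geq d_H^+(S) - d_H^-(S)$ for every $S \subseteq V(T)$.

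To unpack this: for $x \in X$ the hypothesis gives $d_H^+(x) - d_H^-(x) = d_T^+(x) - d_T^-(x) = 0$, while for $y \in Y$ the arcs of $H$ at $y$ are precisely those joining $y$ to $X$ in $T$, so $d_H^+(y) - d_H^-(y) = \Delta(y)$, where $\Delta(y) := |N_T^+(y) \cap X| - |N_T^-(y) \cap X|$. The only (undirected) edges of $H$ lie inside $Y$, so $d_H(S) = |S \cap Y|\cdot |Y \setminus S|$. Writing $s := |S \cap Y|$, the inequality to establish becomes
\[
s(|Y|-s) \;\geq\; \sum_{y \in S \cap Y} \Delta(y).
\]

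The main step is a two-sided bound on the right-hand side. Clearly $|\Delta(y)| \leq |X|$, giving the trivial estimate $s|X|$. The key additional fact comes from $\sum_{x \in X}(d_T^+(x) - d_T^-(x)) = 0$: this implies that the number of arcs from $X$ to $Y$ equals the number from $Y$ to $X$ in $T$, so $\sum_{y \in Y}\Delta(y) = 0$. Hence $\sum_{y \in S \cap Y}\Delta(y) = -\sum_{y \in Y \setminus S}\Delta(y) \leq (|Y|-s)|X|$, and combining the two estimates gives $\sum_{y \in S \cap Y}\Delta(y) \leq \min(s, |Y|-s)\cdot |X|$.

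It remains to show $s(|Y|-s) \geq \min(s,|Y|-s)\,|X|$ for all $0 \leq s \leq |Y|$. The boundary cases $s \in \{0,|Y|\}$ are immediate (both sides vanish, or the bound $\sum_{y \in Y}\Delta(y)=0$ settles the matter). Otherwise we may divide by $\min(s,|Y|-s) \geq 1$ and it suffices to prove $\max(s,|Y|-s) \geq |X|$; since $\max(s,|Y|-s) \geq |Y|/2 = (2k+1-|X|)/2$, this reduces to $3|X| \leq 2k+1$, which is guaranteed by the hypothesis $|X| \leq \tfrac{2}{3}k$. This is exactly where the bound on $|X|$ enters. Once Proposition~\ref{prop:eul-or} is identified as the right tool, the only technical point is this symmetric bound, whose tightness relies crucially on the fact that the vertices of $X$ already have balanced in- and out-degree in $T$.
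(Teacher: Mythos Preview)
Your proof is correct and follows essentially the same approach as the paper: define the mixed graph $H$ keeping the arcs incident to $X$, verify the hypothesis of Proposition~\ref{prop:eul-or}, and conclude via Proposition~\ref{eul}. The only cosmetic difference is that the paper reaches the identity $d_H^+(S)-d_H^-(S)=\sum_{y\in S\cap Y}\Delta(y)$ by peeling off the vertices of $X\cap S$ one at a time (using $d_H^+(x)=d_H^-(x)$), whereas you invoke the vertex-sum identity $d_H^+(S)-d_H^-(S)=\sum_{v\in S}(d_H^+(v)-d_H^-(v))$ directly; the resulting bounds and the final comparison $\max(s,|Y|-s)\geq |Y|/2\geq |X|$ are identical.
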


\begin{proof}
    Let $H$ be the mixed graph which is obtained from $\UG(T)$ by giving all edges in $E(\UG(T)\langle X \rangle) \cup \delta_{\UG(T)}(X)$ the orientation they have in $T$. Now consider some $S \subseteq V(T)$.
    Let $x \in S\cap X$. Since $d_H^+(x)=d_H^-(x)$, we have
    \begin{align*}
        d_H^+(S\setminus \{x\})-d_H^-(S\setminus \{x\})
        &=\left(d_H^+(S)-d_H^+(x,V(H)\setminus S)+d_H^+(S\setminus \{x\},x)\right)\\
        &\qquad -\left(d_H^-(S)-d_H^-(x,V(H)\setminus S)+d_H^-(S\setminus \{x\},x)\right)\\
        &=d_H^+(S)-d_H^+(S)\\
        &\qquad +d_H^+(S\setminus \{x\},x)+d_H^+(V(H)\setminus S,x)\\
        &\qquad -d_H^+(x,V(H)\setminus S)-d_H^+(x,S\setminus \{x\}))\\
        &=d_H^+(S)-d_H^+(S)+d_H^-(x)-d_H^+(x)\\
        &=d_H^+(S)-d_H^-(S).
    \end{align*}
    Repeatedly applying this argument, we obtain $d_H^+(S)-d_H^-(S)=d_H^+(S \setminus X)-d_H^-(S \setminus X) \leq d_H^+(S \setminus X) \leq |S \setminus X| \cdot |X|$ since every arc in $H$ has an extremity in $X$. Symmetrically, we obtain $d_H^+(S)-d_H^-(S)=d_H^-(V(H)\setminus S)-
    d_H^+(V(H)\setminus S)=d_H^-(V(H)\setminus (S\cup X))-
    d_H^+(V(H)\setminus (S\cup X))\leq |V(H)\setminus (S\cup X)| \cdot |X|$.
    
    As $|X|\leq \frac{2}{3}k\leq \frac{1}{2}|V(H)\setminus X| = \frac{1}{2}(|S\setminus X| + |V(H)\setminus(S \cup X)|)$, we obtain
    \begin{align*}
        d_H^+(S)-d_H^-(S)&\leq \min \{|S \setminus X|,\ |V(H)\setminus (S\cup X)|\} \cdot |X|\\
        & \leq \min \{|S \setminus X|,\ |V(H)\setminus (S\cup X)|\} \cdot \tfrac{1}{2}(|S\setminus X| + |V(H)\setminus(S \cup X)|)\\
        & \leq \min \{|S \setminus X|,\ |V(H)\setminus (S\cup X)|\} \cdot \max \{|S \setminus X|,\ |V(H)\setminus (S\cup X)|\}\\
        &=|S \setminus X| \cdot |V(H)\setminus (S\cup X)|\\
        &= d_H(S).
    \end{align*}
    %
    Hence, by Proposition~\ref{prop:eul-or}, there is an eulerian orientation $T'$ of $T$ such that all the edges in $\UG(T)\langle X \rangle \cup \delta_{\UG(T)}(X)$ have the same orientation in $T$ and $T'$. As $|V(T)|=2k+1$, the tournament $T'$ is $k$-arc-strong by Proposition~\ref{eul}.
\end{proof}

\subsubsection{Main proof for tournaments of size \texorpdfstring{$2k+1$}{2k+1}}\label{small}
We here give the proof of Theorem~\ref{thm:upperM'} for tournaments of size $2k+1$. More precisely, we prove the following statement.
\begin{theorem}\label{mkstrich}
For every $\epsilon >0$, there is an integer $k_0$ such that for every $k \geq k_0$ and for every tournament $T$ on $2k+1$ vertices, we have $\sinv'_k(T)\leq (\frac{4}{3}+\epsilon)k$.
\end{theorem}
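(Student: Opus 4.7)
My plan is to combine Propositions~\ref{extend} and~\ref{fuzzu} in a two-stage scheme. First, spend $o(k)$ inversions on $T$ to obtain a tournament $T_1$ together with a set $X\subseteq V(T)$ of size $\lfloor\tfrac{2}{3}k\rfloor$ such that $d^+_{T_1}(x)=d^-_{T_1}(x)=k$ for every $x\in X$. Then apply Proposition~\ref{extend} to $(T_1,X)$ to obtain a $k$-arc-strong tournament $T^*$ on $V(T)$ that agrees with $T_1$ on every edge of $E(\UG(T_1)\langle X\rangle)\cup\delta_{\UG(T_1)}(X)$. Finally, apply Proposition~\ref{fuzzu} to the induced subtournaments on $V(T)\setminus X$ to get a family of at most $|V(T)\setminus X|-1=\lceil\tfrac{4}{3}k\rceil$ inversions, each contained in $V(T)\setminus X$, transforming $T_1\langle V(T)\setminus X\rangle$ into $T^*\langle V(T)\setminus X\rangle$; because these inversions avoid $X$, they leave every arc incident to $X$ untouched, so applied to $T_1$ they yield $T^*$. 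The total inversion count is $o(k)+\tfrac{4}{3}k\leq(\tfrac{4}{3}+\epsilon)k$ for $k\geq k_0(\epsilon)$.

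The main obstacle, and the technical heart of the argument, is the first stage: producing $\lfloor\tfrac{2}{3}k\rfloor$ balanced vertices using only $o(k)$ inversions. I would proceed in two substeps. First, take $t=\Theta(\sqrt{k}\,\log k)$ random inversions $Y_1,\dots,Y_t$, each obtained by including every vertex of $V(T)$ independently with probability $1/2$. A direct computation shows that each arc of $\Inv(T;Y_1,\dots,Y_t)$ is reversed with probability close to $1/2$ and that pairwise correlations between arc reversals are mild; Chernoff's inequality (Proposition~\ref{chernoff}) combined with the union bound (Proposition~\ref{union}) then yields, with positive probability, a tournament in which every vertex has out-degree within $k\pm k^{1/2+o(1)}$. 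Second, among the most nearly balanced vertices select a target set $X$ of size $\lfloor\tfrac{2}{3}k\rfloor$, and apply a further $o(k)$ inversions whose indicator vectors are designed so that, by Proposition~\ref{linalg}, a random completion satisfies the prescribed system of degree corrections on $X$ with positive probability; this drives every vertex of $X$ to exact balance, producing $T_1$.

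The second stage is then a direct application of Propositions~\ref{extend} and~\ref{fuzzu} as sketched above. The principal difficulty will be in the quantitative analysis of the first stage—specifically, in controlling the correlations between arc reversals after the random batch, choosing the set $X$ in a manner compatible with the fine-tuning batch, and certifying that the relevant matrix for Proposition~\ref{linalg} has full rank—so that the total inversion count stays sublinear in $k$ while producing exact balance on a linear-sized subset.
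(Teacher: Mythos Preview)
Your high-level scheme matches the paper's exactly: randomize to shrink all defects, repair a set of roughly $\tfrac{2}{3}k$ vertices to exact balance, then invoke Propositions~\ref{extend} and~\ref{fuzzu}. The paper even uses only $q^*=\lceil\log^2 k\rceil$ random inversions rather than your $\Theta(\sqrt{k}\log k)$, but both are $o(k)$, so this is immaterial to the bound.

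The gap is in your second substep, which you yourself flag as the principal difficulty. Degree corrections are \emph{integer} constraints, while Proposition~\ref{linalg} concerns $\mathbb{F}_2$ systems, so ``a random completion satisfies the prescribed system of degree corrections'' does not type-check as stated; naively balancing the $\tfrac{2}{3}k$ target vertices one at a time costs $\Omega(k)$ inversions, which destroys the bound. The paper supplies a genuine idea here. It partitions the target vertices into batches $B_1,\dots,B_t$ of size $q=\lceil\tfrac{1}{4}\log k\rceil$, where within each batch all defects have the \emph{same parity}. For each $B_i$ it builds a \emph{single} inversion set $Y_i\subseteq B_i\cup A$ greedily: start with $Y_i=B_i$ and repeatedly add one vertex $a\in A$ that lies simultaneously in the ``correct'' (in- or out-) neighbourhood of every $b\in B_i$, so that the maximum defect over $B_i$ drops by $1$; common parity guarantees all defects reach $0$ together. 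The random stage is precisely what makes this greedy step succeed: with positive probability the $\mathbb{F}_2^{q^*}$-vectors attached to any $q$ target vertices are linearly independent and the corresponding common neighbourhoods in $A$ have size $\Omega(\sqrt{k}\log k)$---Proposition~\ref{linalg} enters here, to show that a random $a\in A$ lands in the prescribed common neighbourhood with probability $2^{-q}$. This yields $t\le\tfrac{2k}{3q}=O(k/\log k)$ repair inversions, one per batch, and is the missing ingredient you would still have to supply.
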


\begin{proof}
Let $\epsilon >0$. We may assume $\epsilon \leq 1/3$.
We choose $k_0$ large enough so that for all $k \geq k_0$, the  three following inequalities hold:

\begin{itemize}

\item $\lceil \log^2 k\rceil + \lceil\frac{1}{4}\log k\rceil +  \frac{2k}{3\lceil\frac{1}{4}\log k\rceil} \leq \epsilon k$

\item $16 \log (k) \cdot \lceil \frac{1}{4}\log k\rceil \leq \sqrt{k}$.

\item $10 \log k \leq k^{\frac{1}{8}}$, 


\end{itemize}

In particular, the last inequality is wrong for $k=2^{12}$ which implies $k_0\ge 2^{12}$. Thus the following inequalities also hold for all $k\geq k_0$: 
$\lceil\frac{1}{4}\log k\rceil\leq \frac{1}{3} k$; 
$\lceil\frac{1}{4}\log k\rceil\leq \sqrt{k}$; 
$k\geq 13$; 
$ 3 \lceil \frac{1}{4} \log k\rceil \leq \log k$.

\medskip

Let $T$ be a tournament on $2k+1$ vertices for some $k \geq k_0$. Let $A \subseteq V(T)$ be an arbitrary subset of $V(T)$ with $|A|=k$ and $B=V(T)\setminus A$. 
For some $b \in B$ and a tournament $\tilde{T}$ on $V(T)$, we denote $|k-d_{\tilde{T}}^-(b)|$ by {\boldmath $\defect_{\tilde{T}}(b)$}, the {\bf defect of $b$ in $\Tilde{T}$}. Let $q=\lceil\frac{1}{4}\log k\rceil$ and $q^* = \lceil \log^2 k\rceil$. We now choose $q^*$ sets $X_1,\ldots,X_{q^*}\subseteq V(T)$ independently and uniformly at random. 
Let $T'=\Inv(T;X_1,\ldots,X_{q^*})$. For some $b \in B$, we denote by $\vec{b}$ the vector in $\mathbb{F}_2^{q^*}$ whose $i$-th entry is 1 if $b \in X_i$ and 0 otherwise for $i\in[q^*]$.
Observe that by the choice of $X_1,\ldots,X_{q^*}$, the vectors $\{\vec{b} \mid b \in B\}$ follow a uniform distribution and are mutually independent.

We now define a list of possible events:

\begin{enumerate}[label=\Alph*]
    \item[$E_1$]: There is some $b \in B$ with $\defect_{T'}(b)\geq 2 \sqrt{k} \log k $.
    \item[$E_2$]: There are some $b_1,\ldots,b_q \in B$ such that $\vec{b}_1,\ldots,\vec{b}_q$ are linearly dependent.
    \item[$E_3$]: There are some $b_1,\ldots,b_q \in B$ such that $\vec{b}_1,\ldots,\vec{b}_q$ are linearly independent and a sequence $(\star_1,\ldots,\star_q)\in \{+,-\}^q$ such that $|\bigcap_{i=1}^q N_{T'}^{\star_i}(b_i)\cap A|\leq 5 \sqrt{k} \log k $.
\end{enumerate}

\begin{lemma}\label{erfolg}
    If none of $E_1$, $E_2$ and $E_3$ occur, then $\sinv_k'(T)\leq (\frac{4}{3}+\epsilon)k$.
\end{lemma}

\begin{proof}
Let $(B_1,\ldots,B_t)$ be a maximal collection of disjoint subsets of $B$ with the following properties:
\begin{itemize}
    \item $|B_i|=q$ for $i=1,\ldots,t$,
    \item $\defect_{T'}(b)\equiv \defect_{T'}(b') \mod 2$ for all $b,b' \in B_i$ for $i=1,\ldots,t$,
    \item $tq \leq \frac{2}{3}k$.
\end{itemize}

\begin{claim}\label{bgross}
    $|\bigcup_{i=1}^tB_i|\geq \frac{2}{3}k-q$.
\end{claim}
\begin{proofclaim}
    Suppose otherwise. Observe that, as $\lceil\frac{1}{4}\log k\rceil\leq \frac{1}{3} k$, we have $|B\setminus (\bigcup_{i=1}^tB_i)|=|B|-|\bigcup_{i=1}^tB_i|\geq (k+1)-(\frac{2}{3}k-q)\geq \frac{1}{3}k+q \geq 2q$. 
    By the Pigeonhole Principle, there exists a set $B_0\subseteq B\setminus (\bigcup_{i=1}^tB_i)$ with $|B_0|=q$ such that $\defect_{T'}(b)\equiv \defect_{T'}(b') \mod 2$ for all $b,b' \in B_0$. Hence $B_0$ can be added to $\mathcal{B}$ without violating any of the above conditions, a contradiction to the maximality of $(B_1,\ldots,B_t)$.
\end{proofclaim}

\begin{claim} Let $i\in [t]$.
    There is a set $Y_i\subseteq B_i \cup A$ such that $\defect_{\Inv(T';Y_i)}(b)=0$ for all $b \in B_i$.
\end{claim}
\begin{proofclaim}
    Let $p=\max\{\defect_{\Inv(T';B_i)}(b) \mid b \in B_i\}$. Observe that $p\leq  \max\{\defect_{T'}(b)+q-1 \mid b \in B_i\}$. Hence, as $E_1$ does not occur and $q=\lceil\frac{1}{4}\log k\rceil\leq \sqrt{k} \log k $, we have $p\leq 2 \sqrt{k} \log k + \sqrt{k} \log k = 3 \sqrt{k} \log k $. We shall now build a sequence of sets $Y_i^0,\ldots,Y_i^p$ with the following properties:
    
    \begin{itemize}
        \item $Y_i^0=B_i$,
        \item $Y_i^{j+1}$ is obtained from $Y_i^j$ by adding one vertex from $A\setminus Y_i^j$ for $j=0,\ldots,p-1$,
        \item $\max\{\defect_{\Inv(T';Y_i^j)}(b) \mid b \in B_i\}=p-j$ for $j=0,\ldots,p$.
    \end{itemize}
    The statement then follows for $Y_i=Y_i^p$. 
    
    Suppose that we have already created the sets $Y_i^0,\ldots,Y_i^j$ for some $j$ with $0\leq j \leq p-1$ and now want to create $Y_i^{j+1}$. 
    Let $(b_1,\ldots,b_q)$ be an arbitrary ordering of the vertices in $B_i$. 
    For every $\mu \in [q]$, if  $\max\{\defect_{\Inv(T';Y_i^j)}(b) \mid b \in B_i\}=d_{\Inv(T';Y_i^j)}^-(b_\mu)-k$, let $\star_\mu=-$, if $\max\{\defect_{\Inv(T';Y_i^j)}(b) \mid b \in B_i\}=k-d_{\Inv(T';Y_i^j)}^-(b_\mu)$, let $\star_\mu=+$, otherwise choose $\star_\mu \in \{+,-\}$ arbitrarily. 
    As $E_2$ does not occur, $\vec{b}_1,\ldots,\vec{b}_q$ are linearly independent. 
    Moreover, as $E_3$ does not occur, $|\bigcap^q_{\mu=1} N^{\star_\mu}_{T'}(b_\mu)\cap A|\geq 5 \sqrt{k} \log k$. 
    As $p \leq 3 \sqrt{k} \log k$, and $q \leq \sqrt{k} \log k $, we have $|Y_i^j|\leq p+q \leq 4 \sqrt{k} \log k$. 
    Thus $(\bigcap_{\mu=1}^q N_{T'}^{\star_\mu}(b_\mu)\cap A)\setminus Y_i^j\neq \emptyset$. 
    We can hence choose some $y \in (\bigcap_{\mu=1}^q N^{\star_\mu}_{T'}(b_\mu)\cap A)\setminus Y_i^j$ and define $Y_i^{j+1}=Y_i^j\cup \{y\}$. 
    By definition, $Y_i^{j+1}$ is obtained from $Y_i^j$ by adding one vertex in $A\setminus Y_i^j$. 
    In order to see that the last property holds, let $\mu \in [q]$. 
    If $\max\{\defect_{\Inv(T';Y_i^j)}(b)\mid b \in B_i\}=d_{\Inv(T';Y_i^j)}^-(b_\mu)-k$, we have $\star_\mu=-$ and hence $y \in N^-_{\Inv(T';Y_i^j)}(b_\mu)$. 
    As $d_{\Inv(T';Y_i^j)}^-(b_\mu)-k >0$, we obtain $\defect_{\Inv(T';Y_i^{j+1})}(b_\mu)=d_{\Inv(T';Y_i^{j+1})}^-(b_\mu)-k=d_{\Inv(T';Y_i^j)}^-(b_\mu)-k-1=p-j-1=p-(j+1)$. 
    Similarly, if $\max\{\defect_{\Inv(T';Y_i^j)}(b)\mid b \in B_i\}=k-d_{\Inv(T';Y_i^j)}^-(b_\mu)$, then $\defect_{\Inv(T';Y_i^{j+1})}(b)=p-(j+1).$ 
    Otherwise, let $b' \in B$ with $\defect_{\Inv(T';Y_i^j)}(b')=p-j$.
    As $\defect_{T'}(b_\mu) \equiv \defect_{T'}(b') \mod 2$, we have $\defect_{\Inv(T';Y_i^j)}(b_\mu)\equiv \defect_{T'}(b_\mu)+|Y_i^j|-1\equiv \defect_{T'}(b')+|Y_i^j|-1\equiv \defect_{\Inv(T';Y_i^j)}(b') \mod 2$, hence $\defect_{\Inv(T';Y_i^j)}(b_\mu)\leq \defect_{\Inv(T';Y_i^j)}(b')-2$. 
    This yields $\defect_{\Inv(T';Y_i^{j+1})}(b_\mu)\leq \defect_{\Inv(T';Y_i^j)}(b_\mu)+1\leq \defect_{\Inv(T';Y_i^j)}(b')-2+1=p-(j+1)$.
\end{proofclaim}

Let $T''=\Inv(T';(Y_1,\ldots,Y_t))$. Observe that by definition of the sets $Y_i$, we have $\defect_{T''}(b)=0$ for all $b \in\bigcup_{i=1}^t B_i$. 
As $|\bigcup_{i=1}^t B_i|=qt\leq\frac{2}{3}k$, we obtain by Proposition~\ref{extend} that there is a $k$-arc-strong tournament $T'''$ on $V(T)$ such that all the edges in $\UG(T)\langle \bigcup_{i=1}^t B_i\rangle \cup \delta_{\UG(T)}(\bigcup_{i=1}^t B_i)$ have the same orientation in $T''$ and $T'''$. 
Further, by Proposition~\ref{fuzzu}, there is a collection of $r=|V(T)\setminus \bigcup_{i=1}^t B_i|-1$ subsets $Z_1,\ldots,Z_r$ of $V(T)\setminus\bigcup_{i=1}^t B_i$ such that $\Inv(T''\langle V(T)\setminus\bigcup_{i=1}^t B_i \rangle; Z_1,\ldots,Z_r)=T'''\langle V(T)\setminus \bigcup_{i=1}^t B_i \rangle$. 
We obtain that $\Inv(T;X_1,\ldots,X_{q^*},\linebreak Y_1,\ldots,Y_t,Z_1,\ldots,Z_r)=\Inv(T';Y_1,\ldots,Y_t,Z_1,\ldots,Z_r)=\Inv(T'';Z_1,\ldots,Z_r)=T'''$. 
As $T'''$ is $k$-arc-strong, we have  $\sinv_k'(T)\leq q^* +t+r$.
Now $q^*= \lceil \log^2 k\rceil$, $t\leq \frac{2k}{3q}=\frac{2k}{3\lceil\frac{1}{4}\log k\rceil}$, and $r = |V(T)| - |\bigcup_{i=1}^t B_i| -1 \leq 2k -(\frac{2}{3}k -q) = \frac{4}{3}k + q =  \frac{4}{3}k + \lceil\frac{1}{4}\log k\rceil$ by Claim~\ref{bgross}.
Thus $\sinv_k'(T)\leq\frac{4}{3}k + \lceil \log^2 k\rceil + \lceil\frac{1}{4}\log k\rceil +  \frac{2k}{3\lceil\frac{1}{4}\log k\rceil} \leq \frac{4}{3}k + \epsilon k$.
\end{proof}
In the following lemmas, we show that the probability of each of $E_1,E_2$ and $E_3$ is small.
\begin{lemma}\label{prob1}
    $\Pr(E_1)<\frac{1}{3}$.
\end{lemma}
\begin{proof}
Let $b \in B$. We first bound the probability that $\vec{b}=\vec{0}$. Due to the independent and uniform choice of $(X_1,\ldots,X_{q^*})$, we have $\Pr[\vec{b}=\vec{0}]=2^{-q^*}\leq 2^{-2\log k}=\frac{1}{k^2}$.

Now suppose that $\vec{b}\neq \vec{0}$. Once $\vec{b}$ has been revealed, for every $v \in V(T)$, let $\Gamma_v$ be a random variable which is 1 if $T'$ contains the arc $bv$ and 0 otherwise and let $\Gamma=\sum_{v \in V(T-b)}\Gamma_v$. Observe that, due to the independent and uniform choice of $(X_1,\ldots,X_{q^*})$, we have $\Pr(\Gamma_v=1)=\frac{1}{2}$ for all $v \in V(T-b)$ and the $\Gamma_v$ are independent. This yields that $\Gamma\sim \Bin(2k,\frac{1}{2})$. Further, observe that $\defect_{T'}(b)\leq 2 \sqrt{k} \log k $ if and only if $k-2 \sqrt{k} \log k\leq \Gamma \leq k+2 \sqrt{k} \log k$. By Chernoff's Bound (Proposition~\ref{chernoff}), we obtain 

\begin{align*}
    \Pr\left[\Gamma<k-2 \sqrt{k} \log k\right]&\leq \Pr \left[\Gamma<\left(1-\frac{2 \sqrt{k} \log k}{k}\right)k\right]\\
    &\leq \exp\left(-\frac{4 k \log ^2 k}{2k^2}k\right)\\
    &\leq \exp(-2 \log k)\\
    &\leq \exp(-2 \ln k)\\
    & = \frac{1}{k^2}.
\end{align*} 
Similarly, since $2k-\Gamma \sim \Bin(2k,\frac{1}{2})$, we obtain that $\Pr[\Gamma>k+2 \sqrt{k} \log k]\leq \frac{1}{k^2}$.
This yields that 
\begin{align*}
    \Pr[\defect_{T'}(b)&>2 \sqrt{k} \log k]\\&=\Pr[\vec{b}=\vec{0}] \cdot \Pr[\defect_{T'}(b)>2 \sqrt{k} \log k\mid \vec{b}=\vec{0}]+\Pr[\vec{b}\neq 0] \cdot \Pr[\defect_{T'}(b)>2 \sqrt{k} \log k\mid \vec{b}\neq \vec{0}]\\
    &\leq \Pr[\vec{b}=\vec{0}]+\Pr[\Gamma<k-2 \sqrt{k} \log k\mid \vec{b}\neq \vec{0}]+\Pr[\Gamma>k+2 \sqrt{k} \log k \mid \vec{b}\neq \vec{0}]\\
    &\leq \frac{3}{k^2}.
\end{align*}

As there are $k+1$ vertices contained in $B$, by the Union Bound (Proposition~\ref{union}),
$\Pr(E_1)\leq (k+1)\frac{3}{k^2}< \frac{1}{3}$, as $k \geq 10$.
\end{proof}

\begin{lemma}\label{prob2}
    $\Pr(E_2)<\frac{1}{3}$.
\end{lemma}
\begin{proof}
Let $b_1,\ldots,b_q \in B$. Recall that $\vec{b}_1,\ldots,\vec{b}_q$ are independent and uniformly distributed. For $i\in [q]$, observe that if $\vec{b}_1,\ldots,\vec{b}_i$ are linearly independent, they span a vector space containing $2^{i}$ elements. Hence 

\begin{align*}
    \Pr[\vec{b}_1,\ldots,\vec{b}_q\text{ linearly dependent}]&=\sum_{i=1}^q \Pr[\vec{b}_1,\ldots,\vec{b}_i\text{ linearly dependent}\mid \vec{b}_1,\ldots,\vec{b}_{i-1}\text{ linearly independent }] \\
    & \qquad \qquad \times \Pr[\vec{b}_1,\ldots,\vec{b}_{i-1}\text{ linearly independent}]\\
    &\leq \sum_{i=1}^q \Pr[\vec{b}_1,\ldots,\vec{b}_i\text{ linearly dependent}\mid \vec{b}_1,\ldots,\vec{b}_{i-1}\text{ linearly independent}]\\
    &=\sum_{i=1}^q \frac{2^{i-1}}{2^{q^*}}\\
    &\leq \frac{2^q}{2^{q^*}}\\
    &\leq 2^{-2 \log (k) q} ~~~~\mbox{because $q^*\geq  \log^2 k \geq 
    3\log(k) q\geq 2 \log(k) q+q$}\\
    &= k^{-2q}.
\end{align*}

Observe that there are $\binom{k+1}{q}$ possibilities to choose $b_1,\ldots,b_q$. Hence the Union Bound (Proposition~\ref{union}) yields $\Pr(E_2)\leq \binom{k+1}{q} k^{-2q}\leq (2k)^qk^{-2q}=\left(\frac{2}{k}\right)^q< \frac{1}{3}$ 
as  $q\geq 1$ and $k \geq 7$.
\end{proof}

\begin{lemma}\label{prob3}
    $\Pr(E_3)<\frac{1}{3}$.
\end{lemma}

\begin{proof}
Let $b_1,\ldots,b_q \in B$ such that $\vec{b}_1,\ldots,\vec{b}_q$ are linearly independent and let $(\star_1,\ldots,\star_q)\in \{+,-\}^q$. Once $\vec{b}_1,\ldots,\vec{b}_q$ revealed, for every $a \in A$, let $\Gamma_a$ be a random variable that is 1 if $a \in \bigcap_{i=1}^q N_{T'}^{\star_i}(b_i)$ and 0 otherwise and let $\Gamma=\sum_{a \in A}\Gamma_a$. 
Observe that, due to the independent and uniform choice of $(X_1,\ldots,X_{q^*})$ and by Proposition~\ref{linalg}, we have that $\Pr[\Gamma_a=1]=\frac{1}{2^q}$ for all $a \in A$ and that the $\Gamma_a$ are independent. This yields that $\Gamma\sim \Bin(k,\frac{1}{2^q})$. Further observe that $\Gamma=|\bigcap_{i=1}^q N_{T'}^{\star_i}(b_i)\cap A|$.
Now, since $10 \log k \leq k^{\frac{1}{8}}$ and $q = \lceil\frac{1}{4}\log k\rceil \leq \frac{3}{8}\log k$, we obtain $5 \sqrt{k} \log k \leq \frac{1}{2}k^{\frac{5}{8}}=\frac{1}{2}k2^{-\frac{3}{8}\log k} \leq \frac{1}{2}k2^{-q}$.
By Chernoff's Bound (Proposition~\ref{chernoff}), we obtain 

\begin{align*}
\Pr[\Gamma<5 \sqrt{k} \log k]&\leq \Pr\left[\Gamma<\left(1-\frac{1}{2}\right)k2^{-q}\right]\\
&\leq \exp\left(-\frac{1}{8}k2^{-q}\right)\\
&\leq \exp\left(-\frac{1}{8}\sqrt{k}\right)\\
&\leq \exp(-2 q\log k) ~~~~~ \mbox{because $\sqrt{k} \geq 16 \log k \lceil \frac{1}{4}\log k\rceil$}\\
&\leq \exp(-2 q\ln k)\\
& = k^{-2q}.
\end{align*} 

Observe that there are $\binom{k+1}{q}$ possibilities to choose $b_1,\ldots,b_q$ and for each of those, there are $2^q$ possibilities to choose $(\star_1,\ldots,\star_q)$. Hence the Union Bound (Proposition~\ref{union}) yields $\Pr(E_3)\leq \binom{k+1}{q}2^qk^{-2q}\leq (2k)^q2^qk^{-2q}= \left(\frac{4}{k}\right)^q<\frac{1}{3}$, as $q \geq 1$ and $k \geq 13$.
\end{proof}

We are now ready to conclude the proof of Theorem~\ref{mkstrich}. By Lemmas~\ref{prob1},~\ref{prob2} and~\ref{prob3} and  the Union Bound (Proposition~\ref{union}), we obtain that, with positive probability, none of $E_1$, $E_2$ and $E_3$ occur. Hence the statement follows from Lemma~\ref{erfolg}.
\end{proof}

\subsubsection{Bigger tournaments}\label{big}

We now show how to extend this result to larger tournaments and derive Theorem~\ref{thm:upperM'} from Theorem~\ref{mkstrich}.

\borneMprime*

\begin{proof}
Let $\epsilon >0$. 
We shall prove that there is an integer $k^*$ such that for every $k \geq k^*$ and for every tournament $T$ on at least $2k+1$ vertices, we have $\sinv'_k(T)\leq (\frac{4}{3}+\epsilon)k$. 

By Theorem~\ref{mkstrich}, there is an integer $k_0$ such that for every $k \geq k_0$ and for every tournament $T$ on at least $2k+1$ vertices, we have $\sinv'_k(T)\leq (\frac{4}{3}+\frac{\epsilon}{2})k$. Further, let $k_1$ be an integer such that $\frac{\epsilon}{2}k \geq 6\log k\sqrt{k}+1$, $\sqrt{k}\geq 3 \log k$ and $\log k \geq 5$ hold for all $k \geq k_1$. 
Set $k^*= \max\{k_0,k_1\}$.

We now fix an integer $k \geq k^*$ and prove that for every tournament $T$ on $n \geq 2k+1$ vertices, we have $\sinv'_k(T)\leq (\frac{4}{3}+\frac{\epsilon}{2})k+\min\{n-(2k+1),6\log k\sqrt{k}+1\}$ from which the statement follows immediately. Suppose that $T$ is a tournament that does not satisfy this statement and whose number $n$ of vertices is minimum with respect to this property.
\begin{case}
$n > 4k-2$. 
\end{case}
By Proposition~\ref{4k-2}, 
there is some $v \in V(T)$ with $\min\{d_T^+(v),d_T^-(v)\}\geq k$. By the minimality of $T$, there is a collection $\mathcal{X}$ of $(\frac{4}{3}+\frac{\epsilon}{2})k+\min\{(n-1)-(2k+1),6\log k\sqrt{k}+1\}$ subsets of $V(T-v)$ such that $\Inv(T-v;\mathcal{X})$ is $k$-arc-strong. By Proposition~\ref{lem:kstrong+}, we obtain that $\Inv(T;\mathcal{X})$ is $k$-arc-strong. As $(\frac{4}{3}+\frac{\epsilon}{2})k+\min\{(n-1)-(2k+1),6\log k\sqrt{k}+1\} \leq (\frac{4}{3}+\frac{\epsilon}{2})k+\min\{n-(2k+1),6\log k\sqrt{k}+1\}$, we obtain a contradiction.
\begin{case}
$2k+1+6\log k \sqrt{k}< n \leq 4k-2$. 
\end{case}
Let $A$ be an arbitrary subset of $V(T)$ of size $2k+1+\lfloor6\sqrt{k} \log k\rfloor$ and let $B=V(T)\setminus A$. We now choose a subset $X$ of $A$ uniformly at random and let $T'=\Inv(T;B \cup X)$. Now consider some $b \in B$. For every $a \in A$, let $\Gamma_a$ be the probability that $ab \in A(T')$ and observe that $\Gamma=\sum_{a \in A}\Gamma_a$ is exactly $|N^-_{T'}(b)\cap A|$.
Due to the uniform choice of $X$, we have $\Pr[ab \in A(T)]=\frac{1}{2}$ and that the $\Gamma_a$ are independent. This yields $\Gamma \sim \Bin(2k+1+\lfloor6\sqrt{k} \log k\rfloor,\frac{1}{2})$. By Chernoff's Bound (Proposition~\ref{chernoff}), we obtain 

\begin{align*}
\Pr[\Gamma<k]&\leq \Pr\left[\Gamma<\left(1-\frac{ \sqrt{k} \log k}{k}\right)\frac{1}{2}\left(2k+1+\lfloor6\sqrt{k} \log k\rfloor\right)\right]\\
&\leq \exp(-\log^2(k)/2)\\
&\leq \exp(-2 \log k) ~~~~~~~ \mbox{because $k\geq 16$}\\
&\leq \exp(-2 \ln k)\\
& = \frac{1}{k^2}.
\end{align*} 
Similarly, we obtain $\Pr[|N^+_{T'}(b)\cap A|<k]\leq \frac{1}{k^2}$. As $B$ contains at most $(4k-3)-(2k+1+\lfloor6\sqrt{k} \log k\rfloor)\leq 2k$ elements, the Union Bound (Proposition~\ref{union}) yields that the probability that there is at least one $b \in B$ with $\min\{|N^-_{T'}(b)\cap A|,|N^+_{T'}(b)\cap A|\}\leq k$ is at most $\frac{2}{k^2}2k=\frac{4}{k}<1$, as $k\geq 5$.
Hence, with positive probability, we have $\min\{|N^-_{T'}(b)\cap A|,|N^+_{T'}(b)\cap A|\}\geq k$ for all $b \in B$. 
Thus there is $X_0$ such that every vertex of $B$ has at least $k$ in-neighbours and $k$ out-neighbours in $A$ in the tournament $T'_0= \Inv(T;B \cup X_0)$.
Further, by the minimality of $T$ and as $|V(T_0)|-2k+1=6 \sqrt{k}\log k$, there is a collection $\mathcal{X}$ of $\sinv'_k(T_0\langle A\rangle)\leq (\frac{4}{3}+\frac{\epsilon}{2})k+\lfloor6\log k\sqrt{k}\rfloor$ sets such that $\Inv(T'_0\langle A \rangle;\mathcal{X})$ is $k$-arc-strong. Hence, by Proposition~\ref{lem:kstrong+}, we obtain that $\Inv(T;\{X_0\} \cup \mathcal{X})$ is $k$-arc-strong, a contradiction.  


\begin{case}
$2k+2\leq n \leq 2k+6\sqrt{k} \log k$. 
\end{case}
Let $v \in V(T)$ be an arbitrary vertex. We can then find a set $X \subseteq V(T)$ such that for $T'=\Inv(T;X)$, we have $\min\{d_{T'}^+(v),d_{T'}^-(v)\}\geq k$. Further, by the minimality of $T$, there is a collection $\mathcal{X}$ of $\sinv'_k(T'-v)\leq (\frac{4}{3}+\frac{\epsilon}{2})k+n-(2k+1)-1$ sets such that $\Inv(T'-v; \mathcal{X})$ is $k$-arc-strong. Hence by Proposition~\ref{lem:kstrong+}, we obtain that $\Inv(T;\{X\}\cup \mathcal{X})$ is $k$-arc-strong, a contradiction.
\end{proof}

\section{Upper bounds on \texorpdfstring{$m_k(n)$}{mk(n)}\label{sec:upper_bound_Mkn}}
In this section, we prove several results showing that tournaments on significantly more than $2k$ vertices can be made $k$-strong by a small number of inversions. More precisely, in Subsection~\ref{first} we prove Theorem~\ref{thm:s1} and Proposition \ref{nk1unter}, in Subsection~\ref{nk10} we prove Theorem~\ref{nk1}, in Subsection~\ref{nk3sec} we prove Theorem~\ref{nk3}, and in Subsection~\ref{epsgross}, we prove Theorem~\ref{thm:2+eps}.

 \subsection{First upper bounds on \texorpdfstring{$m_k(n)$}{mk(n)}}\label{first}

In this subsection, we first establish that, for every fixed $k$, every tournament which is sufficiently large in comparison to $k$ can be made $k$-strong by a single inversion. While this result ( Theorem~\ref{thm:s1}) is clearly weaker than Theorem~\ref{nk1}, it justifies some of the notation used later on, and may serve as a warm-up exercise of the more involved proof of Theorem~\ref{nk1}.

\begin{theorem}\label{thm:s1}
Let $n,k$ be positive integers and let $T$ be a tournament on $n$ vertices.
If $n \geq (2k-1)2^{2k}$, then $\sinv'_k(T) \leq \sinv_k(T) \leq 1$.
\end{theorem}
\begin{proof}
Let $T$ be a tournament of order $n\geq (2k-1)2^{2k}$.

  It is easy and well-known that if $D$ is an acyclic digraph, $x$ a source in $D$, and $D-x$ is contained (as a subdigraph) in every tournament of order $n$, then $D$ is contained (as a subdigraph) in every tournament of order $2n$.  
An easy induction yields that $T$ contains three sets
  $A_1, A_2, A_3$ such that $A_1\Ra (A_2\cup A_3)$ and
  $A_2\Ra A_3$ with $|A_1|=|A_3| = k$ and $|A_2|=2k-1$.
Set $A=A_1\cup A_2 \cup A_3$.
Let $I$ be the set of vertices in $V(T)\setminus A$ that have either fewer than $k$ out-neighbours in $A$ or fewer than $k$ in-neighbours in $A$.
Let $X=A_1\cup A_3\cup I$.
Let us prove that $T'=\Inv(T; X)$ is $k$-strong.

In $T'$, we have $A_1 \Ra A_2 \Ra A_3\Ra A_1$. Since the three sets $A_1$, $A_2$, and $A_3$ have size at least $k$, the tournament $T'\langle A\rangle$ is $k$-strong.
Now consider a vertex $v$ in $V(T)\setminus A$.
If $v\notin I$, then no arcs incident to $v$ have been reversed so its in- and out-degree have been unchanged and are at least $k$ by definition of $I$.
If $v\in I$, then all the arcs between $v$ and $A_1\cup A_3$ have been reversed and those between $v$ and $A_2$ are unchanged.
If $v$ has fewer than $k$ out-neighbours in $A$ in $T$, then $|N^+_{T'}(v)\cap A| \geq |N^-_T(v) \cap (A_1\cup A_3)| \geq 2k - d^+_T(v) \geq k$, and $|N^-_{T'}(v)\cap A|\geq |N^-_T(v) \cap A_2| \geq 2k-1 - d^+_T(v) \geq k$.
Similarly, if $v$ has fewer than $k$ in-neighbours in $A$ in $T$, then 
$|N^+_{T'}(v)\cap A|\geq k$ and $|N^-_{T'}(v)\cap A|\geq k$.
Thus, by 
Lemma~\ref{lem:kstrong+}, $T'$ is $k$-strong.
Hence $\sinv_k(T) \leq 1$.
\end{proof}

\subsection{Linear upper and lower bounds on \texorpdfstring{$N_k(1)$}{Nk(1)}}\label{nk10}

In this subsection, we shall prove  Theorem~\ref{nk1} which states that $N_k(1)\leq 19k -2$. To prove it we need some preliminaries.

For a digraph $D$, let $\sigma=(v_1,v_2, \ldots, v_n)$ be an ordering of the vertices of $D$. An arc $v_iv_j$ is {\bf forward} (according to $\sigma$) if $i<j$ and {\bf backward} (according to $\sigma$) if $j<i$.
A {\bf median order} of $D$ is an ordering of the vertices of $D$ with the maximum number of forward arcs, or, equivalently, the minimum number of backward arcs.

Let us note basic well-known properties of median orders of
tournaments (the ``feedback property'' in \cite{HaTh00}).

\begin{lemma}\label{lem:median}
 Let $T$ be a tournament and $(v_1,v_2, \ldots, v_n)$ a
median order of $T$. Then, for any two indices $i,j$ with $1 \leq i <
j \leq n$:
\medskip
\begin{enumerate}
\item[\rm (M1)] $(v_i,v_{i+1},\ldots,v_j)$ is a median order of the
  induced subtournament $T\langle \{v_i,v_{i+1},\ldots,v_j\}\rangle$.\\
  
\item[\rm (M2)] the vertex $v_i$ is an in-neighbour of at least half of the vertices
  $v_{i+1},v_{i+2},\ldots,v_j$, and the vertex $v_j$ is  an out-neighbours of at least half of the vertices $v_i,v_{i+1},\ldots,v_{j-1}$.  In particular, for every $1 \leq i \leq n-1$, $v_i v_{i+1} \in A(T)$. 

\end{enumerate}
\end{lemma}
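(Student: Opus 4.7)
My plan is to prove the two items as standard exchange arguments, showing that if either fails then we can produce an ordering with strictly more forward arcs, contradicting optimality of $\sigma$.

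For (M1), the key observation is that whether an arc with one endvertex in $\{v_i, \ldots, v_j\}$ and the other outside is forward depends only on whether the outside endvertex sits before position $i$ or after position $j$, not on the internal ordering of $\{v_i, \ldots, v_j\}$. Likewise, arcs with both endvertices outside $\{v_i, \ldots, v_j\}$ are unaffected by a rearrangement inside the interval. So, if some ordering $\sigma'$ of $\{v_i, \ldots, v_j\}$ produced strictly more forward arcs in $T\langle\{v_i,\ldots,v_j\}\rangle$ than $(v_i, \ldots, v_j)$ does, substituting $\sigma'$ in place of $(v_i, \ldots, v_j)$ inside $\sigma$ would yield an ordering of $V(T)$ with strictly more forward arcs than $\sigma$, contradicting that $\sigma$ is median.

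For (M2), fix $1 \leq i < j \leq n$ and consider the ordering $\sigma^\star$ obtained from $\sigma$ by moving $v_i$ from position $i$ to the position just after $v_j$. The forward/backward status of any arc not incident to $v_i$ is unchanged, as is the status of arcs between $v_i$ and vertices outside $\{v_{i+1}, \ldots, v_j\}$. For arcs between $v_i$ and the set $S = \{v_{i+1}, \ldots, v_j\}$: each out-arc of $v_i$ into $S$ was forward and becomes backward, while each in-arc to $v_i$ from $S$ was backward and becomes forward. If $a = |N^+(v_i) \cap S|$ and $b = |N^-(v_i) \cap S|$, then the net change in the number of forward arcs is $b - a$. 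Since $\sigma$ is median, this change must be $\leq 0$, so $a \geq b$, which combined with $a + b = j - i$ gives $a \geq (j-i)/2$, i.e. $v_i$ is an in-neighbour of at least half of $\{v_{i+1}, \ldots, v_j\}$. The symmetric statement for $v_j$ is proved by the analogous move sending $v_j$ to the position just before $v_i$.

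The ``in particular'' part is obtained by applying (M2) with $j = i+1$: the set $\{v_{i+1}\}$ has a single element, and $a \geq 1/2$ forces $a = 1$, so $v_i v_{i+1} \in A(T)$.

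I expect no real obstacle; the only care needed is bookkeeping when moving $v_i$ to verify that no arcs other than those incident to $v_i$ change status, and that among arcs incident to $v_i$, only those to $\{v_{i+1}, \ldots, v_j\}$ are affected. I would lay these verifications out explicitly in the final write-up and otherwise keep the argument short.
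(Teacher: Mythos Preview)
Your proof is correct and is the standard exchange argument for these properties. The paper does not actually prove this lemma: it merely states it as a well-known fact (the ``feedback property'') and refers to \cite{HaTh00}, so there is no proof in the paper to compare against; your argument is precisely the one that underlies that citation.
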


Given two vertices $u,v$ of some digraph $D$, we say that $v$ is {\bf reachable} from $u$ if $D$ contains a directed $uv$-path. For $v \in V(D)$, we use $R^+_D(v)$ (resp. $R^-_D(v)$) for the set of vertices which are reachable from $v$ in $D$ (resp. from which $v$ is reachable in $D$). 
Note that $v\in R^+_D(v) \cap R^-_D(v)$.

\begin{lemma}\label{lem:n-2F}
Let $T$ be a tournament with median order $(v_1, v_2, \ldots , v_n)$.
Let $F$ be a subset of vertices such that $v_1\notin F$. 
Then $|R^+_{T-F}(v_1)| \geq n - 2 |F|$.
\end{lemma}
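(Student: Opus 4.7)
The plan is to prove the lemma by induction on $n$.

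The base case $n=1$ is immediate: since $v_1 \notin F$ forces $F = \emptyset$, we have $R^+_T(v_1) = \{v_1\}$, so $|R^+_T(v_1)| = 1 \geq n - 2|F|$.

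For the inductive step, I examine the last vertex $v_n$ of the median order. If $v_n \in F$, property~(M1) ensures $(v_1, \ldots, v_{n-1})$ is a median order of $T - v_n$, and the induction hypothesis applied to $T - v_n$ with $F \setminus \{v_n\}$ yields $|R^+_{T - F}(v_1)| \geq (n-1) - 2(|F|-1) = n - 2|F| + 1$. If $v_n \notin F$, applying the induction hypothesis to $T - v_n$ with $F$ (again using (M1)) gives $|R^+_{(T - v_n) - F}(v_1)| \geq (n-1) - 2|F|$, and when $v_n$ is reachable from $v_1$ in $T - F$, the reachable set gains one extra element and we are done.

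The main obstacle is the remaining subcase: $v_n \notin F$ and $v_n$ is not reachable from $v_1$ in $T - F$, where the naive induction is short by one. The plan is to extract extra slack from the structure. Set $\bar{R} = V(T - v_n) \setminus F \setminus R^+_{(T - v_n) - F}(v_1)$. Since $v_n$ is unreachable, every in-neighbor of $v_n$ in $T - F$ must lie in $\bar{R}$, for otherwise $v_n$ itself would be reachable. Property~(M2) yields $|N^-_T(v_n) \cap \{v_1, \ldots, v_{n-1}\}| \geq (n-1)/2$, so $|\bar{R}| \geq (n-1)/2 - |F|$. Combining with the induction hypothesis $|\bar{R}| \leq |F|$ forces $n \leq 4|F| + 1$, restricting the problem to a bounded regime.

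Within this bounded range, the finishing argument leverages (M2) applied to $v_1$: since arcs between $\bar{R}$ and the reachable set in $T - F$ can only go from $\bar{R}$ to the reachable set, every vertex of $\bar{R}$ and $v_n$ itself dominates $v_1$ in $T$, giving $|\bar{R}| + 1 \leq |N^-_T(v_1)| \leq (n-1)/2$. Coupled with a structural analysis tracking how consecutive $\bar{R}$-positions in the median order must be preceded by a vertex of $F$ (using $v_{a-1} \to v_a$ from (M2) together with the impossibility $v_{a-1} \in R^+_{(T-v_n)-F}(v_1)$ whenever $v_a \in \bar{R}$), one constructs an injective pairing from the maximal runs of $\bar{R}$ back to $F$, upgrading the inductive bound to $|\bar{R}| \leq |F|$ and thereby recovering the missing unit.
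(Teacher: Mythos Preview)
Your induction handles the easy cases correctly, but the crucial subcase --- $v_n\notin F$ and $v_n$ unreachable --- is not closed. In that subcase you have $R^+_{T-F}(v_1)=R^+_{(T-v_n)-F}(v_1)$, and the partition of $V(T)\setminus\{v_n\}$ into $R^+$, $\bar R$, and $F$ gives $|R^+|=n-1-|F|-|\bar R|$. Thus the goal $|R^+|\geq n-2|F|$ is equivalent to $|\bar R|\leq |F|-1$. The induction hypothesis only yields $|\bar R|\leq |F|$, so you must squeeze out one extra unit. Your two auxiliary inequalities, $|\bar R|+1\leq (n-1)/2$ and $(n-1)/2\leq |\bar R|+|F|$, combine only to $|F|\geq 1$, which is not enough. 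Your ``structural analysis'' observes that for each $v_a\in\bar R$ one has $v_{a-1}\in F\cup\bar R$ (since $v_{a-1}v_a\in A(T)$ by (M2) and $v_{a-1}\in R^+$ would make $v_a$ reachable), and hence that each maximal run of $\bar R\cup\{v_n\}$ is immediately preceded by a vertex of $F$. But this only bounds the \emph{number of runs} by $|F|$, not $|\bar R|$ itself; runs can have length greater than one. The sentence ``upgrading the inductive bound to $|\bar R|\leq |F|$'' is precisely what induction already gave you --- you need $|\bar R|\leq |F|-1$, and nothing you wrote delivers that.

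For comparison, the paper's proof avoids this difficulty entirely by inducting differently: instead of peeling off $v_n$, it locates the smallest index $i_0$ with $v_{i_0}\in N^+_T(v_1)\setminus F$, applies (M2) to the prefix $\{v_1,\dots,v_{i_0-1}\}$ to bound $i_0-2\leq 2|F\cap\{v_2,\dots,v_{i_0-1}\}|$, and then applies the induction hypothesis to the suffix tournament $T\langle\{v_{i_0},\dots,v_n\}\rangle$ with source $v_{i_0}$. Since $\{v_1\}\cup R^+_{T\langle\{v_{i_0},\dots,v_n\}\rangle-F}(v_{i_0})\subseteq R^+_{T-F}(v_1)$, the pieces add up with no missing unit. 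The key difference is that the paper jumps forward to a vertex \emph{already known to be reachable}, so there is no ``bad unreachable'' case to patch.
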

\begin{proof}
We prove the result by induction on $n+|F|$, the result holding trivially by (M2) if $|F|=0$.

If all the out-neighbours of $v_1$ in $T$ are in $|F|$, then by (M2), $|N_T^-(v_1)| \leq |N_T^+(v_1)|  \leq |F|$. Hence $n-1 \leq 2|F|$, and the result holds.
Henceforth we may assume that $v_1$ has an out-neighbour not in $F$.
Let $i_0$ be the smallest index of such a vertex.
Let $T_0= T\langle \{v_1, \dots , v_{i_0-1}\}\rangle$, $T_1= T\langle \{v_{i_0}, \dots , v_{n}\}\rangle$,  $F_0=F\cap V(T_0)$ and $F_1=F\cap V(T_1)$.
By (M1), $(v_1, v_2, \ldots , v_{i_0-1})$ is a median order of $T_0$ and $(v_{i_0},  \ldots , v_n)$ is a median order of $T_1$.
By definition of $i_0$, all out-neighbours of $v_1$ in $T_0$ are in $F_0$. Thus, as above, we have $i_0-2 \leq 2|F_0|$.
By the induction hypothesis, $|R^+_{T_1-F_1}(v_{i_0})| \geq n - i_0 + 1 - 2 |F_1|$.
Now $R^+_{T_1-F_1}(v_{i_0}) \cup \{v_1\} \subseteq R^+_{T-F}(v_1)$. Hence
 $$|R^+_{T-F}(v_1)| \geq |R^+_{T_1-F_1}(v_{i_0})| +1 \geq n - i_0 +2 - 2|F_1| \geq n - 2|F_0| - 2|F_1| = n - 2|F|.$$
\end{proof}




We are now ready to prove the main lemma of the proof of Theorem~\ref{nk1}.


\begin{lemma}\label{6ab}
    Let $k$ be a positive integer, let $T$ be a tournament on $12k$ vertices and let $(A,B)$ be a bipartition of $V(T)$ such that $|A|=|B|=6k$. 
    Then there is a set $X \subseteq V(T)$ with $|X \cap A|=|X \cap B|=2k$ such that for $T'=\Inv(T;X)$ and for any $Y \subseteq V(T)$ with $|Y|\leq k-1$, we have that $T'-Y$ contains a directed path from $a$ to $B\setminus Y$ for every $a \in A\setminus Y$,
    and $T'-Y$ contains a directed path from $A\setminus Y$ to $b$ for every $b \in B\setminus Y$.
\end{lemma}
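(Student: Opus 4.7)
The plan is to construct $X = X_A \cup X_B$ with $|X_A|=|X_B|=2k$ using median orders of $T\langle A\rangle$ and $T\langle B\rangle$. Let $\sigma_A = (a_1,\ldots,a_{6k})$ be a median order of $T\langle A\rangle$ and $\sigma_B = (b_1,\ldots,b_{6k})$ a median order of $T\langle B\rangle$. A natural choice is to take $X_A$ to consist of the last $2k$ vertices of $\sigma_A$ and $X_B$ the first $2k$ vertices of $\sigma_B$ (so that after inversion, the ``sink-like'' end of $\sigma_A$ becomes a ``source-like'' head in $T'\langle X_A\rangle$, and dually for $\sigma_B$); the precise choice may need adjustment to make both reachability conditions hold simultaneously.

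I would first reformulate the two conditions as expansion properties, which is cleaner to verify: condition (R+), that every $a\in A\setminus Y$ reaches $B\setminus Y$ in $T'-Y$ for all $Y$ with $|Y|\leq k-1$, is equivalent to the statement that every non-empty $R\subseteq A$ satisfies $|N^+_{T'}(R)\setminus R|\geq k$; symmetrically, condition (R$-$) is equivalent to $|N^-_{T'}(R)\setminus R|\geq k$ for every non-empty $R\subseteq B$. The equivalence follows by taking $Y$ to be a minimum separator of the reachable set of $a$.

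To verify the expansion conditions, I would combine Lemma~\ref{lem:n-2F} with property (M2). For the singleton case $R=\{a_i\}$, property (M2) applied to $\sigma_A$ gives many out-neighbours among successors for $i\leq 4k$, whereas for $i\geq 4k+1$ (where $a_i\in X_A$) the reversal of $T\langle X_A\rangle$ turns the strong in-arcs guaranteed by (M2) into at least $k$ out-arcs within $T'\langle X_A\rangle$. For a general non-empty $R\subseteq A$, I would split based on whether $R$ contains a vertex from the prefix $\{a_1,\ldots,a_{4k}\}$ or lies entirely in $X_A$: in the first case (M2) yields many out-arcs from $R$ to successor positions in $A$, and in the second case the reversed structure in $X_A$ together with the reversed arcs from $X_A$ to $X_B$ provide out-neighbours. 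Accounting carefully for the arcs between $A$ and $B$ (those between $X_A$ and $X_B$ are reversed, all others unchanged) gives $|N^+_{T'}(R)\setminus R|\geq k$.

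The main obstacle is that both (R+) and (R$-$) must hold for the same inversion set, so the choice of $X_A$ and $X_B$ must be coordinated. The two sets pull in opposite directions: $X_A$ is chosen to give sink-like vertices of $\sigma_A$ new out-neighbours, while $X_B$ is chosen to give source-like vertices of $\sigma_B$ new in-neighbours, and the reversals on $\delta(X_A,X_B)$ must simultaneously supply exits for problematic vertices of $X_A$ into $X_B$ and entries from $X_A$ into problematic vertices of $X_B$. Checking that a single choice works is the crux, and will require some case analysis together with a counting argument controlling how many arcs between $X_A$ and $X_B$ point the right way in $T$ (hence the wrong way in $T'$, but the desired way for both conditions after the reversal).
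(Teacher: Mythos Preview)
Your framework (median orders, case analysis on the index, expansion reformulation) is sound, but the concrete construction you propose does not work, and the gap is not where you think it is.

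\textbf{The naive choice fails.} Take $T\langle A\rangle$ and $T\langle B\rangle$ transitive, and orient the $A$--$B$ arcs so that $a_{4k+1}\to b_j$ for $j\leq 2k$ and $b_j\to a_{4k+1}$ for $j>2k$. With $X_A=\{a_{4k+1},\dots,a_{6k}\}$ and $X_B=\{b_1,\dots,b_{2k}\}$, every arc incident to $a_{4k+1}$ points into it in $T'=\Inv(T;X_A\cup X_B)$: the arcs to $X_A$ and to $X_B$ get reversed, the arcs to $A\setminus X_A$ and $B\setminus X_B$ stay as in-arcs. So $a_{4k+1}$ is a sink, and (R+) fails already for $Y=\emptyset$. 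Your assertion that for $i\geq 4k+1$ ``the reversal of $T\langle X_A\rangle$ turns the strong in-arcs guaranteed by (M2) into at least $k$ out-arcs within $T'\langle X_A\rangle$'' is simply false: (M2) gives $a_i$ many in-arcs from \emph{predecessors in the full order}, but only those from $\{a_{4k+1},\dots,a_{i-1}\}$ lie in $X_A$ and get reversed, and for $i$ near $4k+1$ there are essentially none.

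\textbf{The missing idea is an adaptive choice, not coordination.} The difficulty is not that $X_A$ and $X_B$ must be balanced against each other; the two sides are handled symmetrically and almost independently (the $A$-side argument uses only that $|X\cap B|=2k$). The real issue is the single-side construction of $X\cap A$. The paper does \emph{not} take the last $2k$ vertices of $\sigma_A$. Instead it lets $A_0\subseteq\{a_{4k+1},\dots,a_{6k}\}$ be exactly those tail vertices with fewer than $k$ out-neighbours in $B$ in $T$, and fills the remaining $2k-|A_0|$ slots with the \emph{front} vertices $A_1=\{a_1,\dots,a_{2k-|A_0|}\}$. Then for $i\in\{4k+1,\dots,6k\}$ either $a_i\notin X$ (so its $\geq k$ out-arcs to $B$ are untouched), or $a_i\in A_0$ has $<k$ out-arcs to $B$ in $T$, hence $>k$ of its $2k$ arcs to $X\cap B$ were in-arcs and become out-arcs after inversion. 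For $i\leq 4k$, (M2) gives $a_i$ at least $(6k-i)/2$ forward out-arcs in $A$; a short computation (splitting according to whether $a_i\in A_1$ or not, and subtracting the at most $2k$ arcs to $X\cap A$ that might have flipped) shows one such out-neighbour survives outside $Y$, enabling induction on $i$. Lemma~\ref{lem:n-2F} is not needed here.
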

\begin{proof}
    Let $(a_1,\ldots,a_{6k})$ be a median order of $T\langle A \rangle$ and let $(b_1,\ldots,b_{6k})$ be a median order of $T\langle B \rangle$. 
    Let $A_0$ be the set of vertices in $\{a_{4k+1},\ldots,a_{6k}\}$ which have fewer than $k$ out-neighbours in $B$ in $T$. Further, let $A_1=\{a_1,\ldots,a_{2k-|A_0|}\}$. 
    Observe that $|A_0 \cup A_1|=|A_0|+|A_1|=2k$. 
    Similarly, let $B_0$ be the set of vertices in $\{b_1,\ldots,b_{2k}\}$ that have fewer than $k$ in-neighbours in $A$ and let $B_1=\{b_{4k+|B_0|+1},\ldots,b_{6k}\}$. 
    Observe that $|B_0 \cup B_1|=|B_0|+|B_1|=2k$. Let $X=A_0 \cup A_1 \cup B_0 \cup B_1$ and let $T'=\Inv(T;X)$. 
    Consider any $Y \subseteq V(T)$ with $|Y|\leq k-1$. 
    In order show that $T'$ has the desired properties, by symmetry, it suffices to prove that $T'-Y$ contains a directed path from $a$ to $B\setminus Y$ for every $a \in A\setminus Y$. 
    Suppose by contradiction that this is not true. Then there is a largest integer $i \in [6k]$ such that $a_i \in A\setminus Y$ and $T'-Y$ does not contain a directed path from $a_i$ to $B\setminus Y$. We will distinguish several cases.

\setcounter{case}{0}
    \begin{case}
        $i \in \{4k+1,\ldots,6k\}$ and $a_i \in A\setminus A_0 $.
    \end{case}
    In this case, by the choice of $A_0$, we have 
    \begin{align*}
        |(N_{T'}^+(a_i)\cap B)\setminus Y|&\geq |N_{T'}^+(a_i)\cap B|-|Y|\\
        &= |N_{T}^+(a_i)\cap B|-|Y|\\
        &\geq k-(k-1)\\
        &=1,
    \end{align*}
    so $a_i$ has an out-neighbour in $B\setminus Y$ in $T'-Y$, a contradiction.
    \begin{case}
        $i \in \{4k+1,\ldots,6k\}$ and $a_i \in A_0$.
    \end{case}
    In this case, by the choice of $A_0$, we have
    \begin{align*}
        |(N_{T'}^+(a_i)\cap B)\setminus Y|&\geq |N_{T'}^+(a_i)\cap B|-|Y|\\
        &\geq|N_{T'}^+(a_i)\cap (B_0 \cup B_1)|-|Y|\\
        &=|B_0 \cup B_1|-|N_{T'}^-(a_i)\cap (B_0 \cup B_1)|-|Y|\\
        &=|B_0 \cup B_1|-|N_{T}^+(a_i)\cap (B_0 \cup B_1)|-|Y|\\
        &\geq|B_0 \cup B_1|-|N_{T}^+(a_i)\cap B|-|Y|\\
        &\geq 2k-(k-1)-(k-1)\\
        &=2,
    \end{align*}
    so $a_i$ has an out-neighbour in $B\setminus Y$ in $T'-Y$, a contradiction.
    
    \begin{case}
        $i \in \{2k-|A_0|+1,\ldots,4k\}$.
    \end{case}
    As $(a_1,\ldots,a_{6k})$ is a median order of $T\langle A \rangle$, and by (M2) applied to $T\langle A \rangle$, we have 
     \begin{align*}
    |(N_{T'}^+(a_i)\cap \{a_{i+1},\ldots,a_{6k}\})\setminus Y|
    &\geq |N_{T'}^+(a_i)\cap \{a_{i+1},\ldots,a_{6k}\}|-|Y|\\
    &= |N_{T}^+(a_i)\cap \{a_{i+1},\ldots,a_{6k}\}|-|Y|\\
    &\geq \frac{1}{2} |\{a_{i+1},\ldots,a_{6k}\}|-|Y|\\
    & \geq \frac{1}{2} (6k-i)-(k-1)\\
    &=2k+1-\frac{i}{2}\\
    &\geq 2k+1-2k\\
    &=1.
    \end{align*}
    Hence there is some $j>i$ such that $a_j\in A\setminus Y$ and $T'-Y$ contains the arc $a_ia_j$. By the maximality of $i$, there is a directed path from $a_j$ to $B\setminus Y$ in $T'-Y$. Hence $T'-Y$ also contains a directed path from $a_i$ to $B\setminus Y$, a contradiction.
    \begin{case}
    $i \in \{1,\ldots,2k-|A_0|\}$.
    \end{case}
     As $(a_1,\ldots,a_{6k})$ is a median order of $T\langle A \rangle$ and by (M2) applied to $T\langle A \rangle$, we have 
     \begin{align*}
    |(N_{T'}^+(a_i)\cap \{a_{i+1},\ldots,a_{6k}\}) \setminus Y| & \geq |N_{T'}^+(a_i)\cap \{a_{i+1},\ldots,a_{6k}\}|-|Y|\\
    &\geq |N_{T}^+(a_i)\cap \{a_{i+1},\ldots,a_{6k}\}|-|(A_0 \cup A_1)\cap \{a_{i+1},\ldots,a_{6k}\}|-|Y|\\
    &\geq \frac{1}{2} |\{a_{i+1},\ldots,a_{6k}\}|-|(A_0 \cup A_1)\cap \{a_{i+1},\ldots,a_{6k}\}|-|Y|\\
    & \geq \frac{1}{2} (6k-i)-(2k-i)-(k-1)\\
    &=\frac{i}{2}+1\\
    &\geq 1.
    \end{align*}
    Hence there is some $j>i$ such that $a_j\in A\setminus Y$ and $T'-Y$ contains the arc $a_ia_j$. By the maximality of $i$, there is a directed path from $a_j$ to $B\setminus Y$ in $T'-Y$. Hence $T'-Y$ also contains a directed path from $a_i$ to $B\setminus Y$, a contradiction.
\end{proof}

    We are now ready to prove Theorem~\ref{nk1}, which states $N_k(1)\leq 19k-2$.
\begin{proof}[Proof of Theorem~\ref{nk1}]
    Assume $n\geq 19k-2$.
    Let $(v_1, \ldots , v_n)$ be a median order of $T$.
    Let $B=\{v_1, \dots , v_{6k}\}$, $A=\{v_{n-6k+1}, \ldots , v_{n}\}$ and $C=V(T)\setminus (A\cup B)$. We have $|C| \geq 7k-2$.
    By Lemma~\ref{6ab}, applied to $T\langle A\cup B\rangle$, there is a subset $X$ of $A\cup B$ such that,
    for $T_1 = \Inv(T;X)$, we have
    \begin{itemize}
        \item[(i)] $|X \cap B|=|X \cap A|=2k$ ; 
        \item[(ii)] for any $Y \subseteq V(T)$ with $|Y|\leq k-1$, $T_1-Y$ contains a directed path from $a$ to $B\setminus Y$ for every $a \in A\setminus Y$; 
        \item[(iii)] for any $Y \subseteq V(T)$ with $|Y|\leq k-1$, $T_1-Y$ contains a directed path from $A\setminus Y$ to $b$ for every $b \in B\setminus Y$.
    \end{itemize}
    Let us now prove that $T_1$ is $k$-strong, which implies $\sinv_k(T) \leq 1$.
    Let $F$ be a set of at most $k-1$ vertices of $T_1$.
    We first need the following intermediate result.
    \begin{claim}\label{nett}
        Let $b \in B\setminus F$ and $a \in A\setminus F$. Then $T_1-F$ contains a directed path from $b$ to $a$.
    \end{claim}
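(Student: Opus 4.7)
The key observation is that the inversion set $X$ lies entirely in $A \cup B$, so every arc of $T$ with at least one endpoint in $C$ is preserved by the inversion. Consequently, any $b \to a$ path in $T - F$ whose internal vertices all lie in $C$ is automatically a path of $T_1 - F$. My plan is to produce such a path, working inside the subtournament $\hat{T} := T\langle\{b\} \cup C \cup \{a\}\rangle$, noting that $\hat{T} - F$ coincides with $T_1\langle\{b\} \cup C \cup \{a\}\rangle - F$ up to possibly the single arc between $b$ and $a$, which is irrelevant for paths of length at least two.

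As a warm-up (and as a cleaner version of the same argument to reuse in the main step), I would first establish the weaker statement that $b$ reaches $a$ in $T'' := T\langle\{v_i, v_{i+1}, \ldots, v_j\}\rangle - F$, where $b = v_i$ and $a = v_j$. By (M1), $(v_i, \ldots, v_j)$ is a median order of $T''$, so Lemma~\ref{lem:n-2F} applied to $T'' - F$ starting at $b$ gives $|R^+_{T''-F}(b)| \geq (j-i+1) - 2|F|$. Were $a$ not in this set, it would dominate $R^+$ in $T''$, so $|N^+_T(a) \cap \{v_i, \ldots, v_{j-1}\}| \geq |R^+|$; but (M2) applied to $(v_i, \ldots, v_j)$ bounds the same quantity by $(j-i)/2$, forcing $j - i \leq 4|F| - 2 \leq 4k - 6$. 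This contradicts $j - i \geq n - 12k + 1 \geq 7k - 1$ (which follows from $n \geq 19k - 2$), so $a \in R^+_{T''-F}(b)$.

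To upgrade this to a path in $\hat{T} - F$, I would rerun the contradiction-style argument directly inside $\hat{T}$. Assuming $b$ does not reach $a$ in $\hat{T} - F$, let $(R, \bar R)$ be the induced out-closed partition with $b \in R$ and $a \in \bar R$, and set $R_C := R \cap C$ and $\bar R_C := \bar R \cap C$, which partition $C \setminus F$, satisfy $R_C \subseteq N^+_T(a) \cap C$ and $\bar R_C \subseteq N^-_T(b) \cap C$, and admit no arc from $R_C$ to $\bar R_C$ in $T\langle C\rangle$. By (M1), $(c_1, \ldots, c_m) := (v_{6k+1}, \ldots, v_{n-6k})$ is a median order of $T\langle C\rangle$, so Lemma~\ref{lem:n-2F} applied to $T\langle C\rangle - (F \cap C)$ gives $|R^+_{T\langle C\rangle - (F \cap C)}(c_1)| \geq m - 2|F \cap C| \geq 5k$, and symmetrically at least $5k$ vertices reach $c_m$ inside $T\langle C\rangle - (F \cap C)$. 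If $c_1 \in R_C$, the out-closure of $R$ inside $\hat{T} - F$ propagates to give $|R_C| \geq 5k$; if $c_m \in \bar R_C$, the in-closure of $\bar R$ gives $|\bar R_C| \geq 5k$; if both hold then $|R_C| + |\bar R_C| \geq 10k$ exceeds $|C \setminus F| \leq n - 12k = 7k - 2$, the desired contradiction.

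The main obstacle I expect is handling the remaining mixed cases, where $c_1$ or $c_m$ lies in $F \cap C$, or $c_1 \in \bar R_C$, or $c_m \in R_C$. These I plan to dispatch by combining the (M2) upper bounds $|R_C| \leq (j - 6k - 1)/2$ and $|\bar R_C| \leq (n - 6k - i)/2$ with $|C \setminus F| \geq n - 13k + 1$ and the range restrictions $i \leq 6k$, $j \geq n - 6k + 1$, reapplying Lemma~\ref{lem:n-2F} inside $T\langle C\rangle$ from alternative starting points whenever $c_1$ or $c_m$ lies in $F$. Each such case reduces to a size comparison that ultimately uses the hypothesis $n \geq 19k - 2$ in an essential way.
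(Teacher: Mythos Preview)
Your plan has a genuine gap: by restricting to $\hat T = T\langle\{b\}\cup C\cup\{a\}\rangle$ you are attempting to prove the \emph{stronger} assertion that $b$ reaches $a$ in $T_1-F$ along a path whose internal vertices all lie in $C$, and this stronger assertion need not hold. Take $b=v_1$ and $n=19k-2$. Applying (M2) to the interval $\{v_1,\dots,v_{n-6k}\}$ only guarantees that $v_1$ has at least $\lceil(n-6k-1)/2\rceil$ out-neighbours there, and up to $6k-1$ of those can lie in $B$; what remains in $C$ is merely about $(k-1)/2$. Since $|F|\le k-1$, one may choose $F$ to contain all of $N^+_T(b)\cap C$; if in addition $ab\in A(T)$, then $b$ is a sink of $\hat T-F$ and no $b\to a$ path through $C$ exists. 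Your mixed-case arithmetic cannot rescue this: the (M2) upper bounds you quote give $|R_C|+|\bar R_C|\le (j-6k-1)/2+(n-6k-i)/2\le n-6k-1$, which is perfectly compatible with $|C\setminus F|\ge n-13k+1$, so no contradiction arises.

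The paper's argument avoids this by \emph{not} confining the path to $\{b\}\cup C$. It works instead in the contiguous interval $T\langle\{v_i,\dots,v_{n-6k}\}\rangle$ (which keeps the tail of $B$) and applies Lemma~\ref{lem:n-2F} after deleting $S=((X\cap B)\cup F)\cap\{v_{i+1},\dots,v_{n-6k}\}$. Removing $X\cap B$ is exactly what neutralises the inversion: every arc on a surviving path has at least one endpoint outside $X$, hence is preserved in $T_1$. The price is $|S|\le |X\cap B|+|F|\le 3k-1$, and the bound $|C|\ge 7k-2$ is precisely what makes the forward-reachable set from $b$ and the backward-reachable set to $a$ (each of size at least $|C\setminus F|-(3k-1)$ inside $C\setminus F$) intersect. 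The missing idea in your proposal is this detour through $B\setminus(X\cup F)$ on the way from $b$ to $C$ (and symmetrically through $A\setminus(X\cup F)$ from $C$ to $a$).
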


    \begin{proofclaim}
    By construction, there is some $i \in \{1,\ldots,6k\}$ such that $b=v_i$. Let $S=((X \cap B)\cup F)\cap \{v_{i+1},\ldots,v_{n-6k}\}$ and let $T'=T\langle\{v_{i},\ldots,v_{n-6k}\}\rangle$. By (M2), every vertex in $S$ and every vertex in $C\setminus F$ is reachable from $b$ in $T'$. On the other hand, there is obviously no vertex in $S$ reachable from $b$ in $T'-S$. 
    Further, by (M1), $(v_i,\ldots,v_{n-6k})$ is a median order of $T'$.
    By Lemma~\ref{lem:n-2F} applied to $T'$ and $S$, this yields  
    \begin{align*}
        2|S|&\geq |R^+_{T'}(b)|-|R^+_{T'-S}(b)|\\
        &= |R^+_{T'}(b)\cap B|-|R^+_{T'-S}(b)\cap B|\\
        & \qquad +|R^+_{T'}(b)\cap (C \cap F)|-|R^+_{T'-S}(b)\cap (C \cap F)|\\
        & \qquad +|R^+_{T'}(b)\cap (C \setminus F)|-|R^+_{T'-S}(b)\cap (C \setminus F)|\\
        &\geq  |S| +|C \setminus F|-|R^+_{T'-S}(b)\cap (C \setminus F)|.
    \end{align*}
    For every $v \in C \setminus F$, if $v$ is reachable from $b$ in $T'-S$, then $v$ is clearly reachable from $b$ in $T_1-F$. Since $|X \cap B|\leq 2k$ and $|F|\leq k-1$, we obtain 
    \begin{align*}
        |R^+_{T_1-F}(b)\cap (C \setminus F)|&\geq  |R^+_{T'-S}(b)\cap (C \setminus F)|\\
        &\geq |C \setminus F|-|S|\\
        &\geq |C \setminus F|-(|X \cap B|+|F|)\\
        &\geq |C \setminus F|-\big(2k+(k-1)\big)\\
        &= |C \setminus F|-(3k-1).
    \end{align*}
    A similar argument shows that $ |R^-_{T_1-F}(a)\cap (C \setminus F)|\geq |C \setminus F|-(3k-1)$. As $|C|\geq n-12k\geq 7k-2$, we obtain
     \begin{align*}
        |(R^+_{T_1-F}(b)\cap (C \setminus F)) \cap (R^-_{T_1-F}(a)\cap (C \setminus F))|&
        =|R^+_{T_1-F}(b)\cap (C \setminus F)|+|R^-_{T_1-F}(a)\cap (C \setminus F)|\\
        &\qquad-|(R^+_{T_1-F}(b)\cap (C \setminus F)) \cup (R^-_{T_1-F}(a)\cap (C \setminus F))|\\
        & \geq |R^+_{T_1-F}(b)\cap (C \setminus F)|+|R^-_{T_1-F}(a)\cap (C \setminus F)|-|C \setminus F|\\
        &\geq 2\big(|C \setminus F|-(3k-1)\big)-|C \setminus F|\\
        &=|C \setminus F|-2(3k-1)\\
        &\geq |C|-|F|-(6k-2)\\
        & \geq (7k-2)-(k-1)-(6k-2)\\
        & = 1.
    \end{align*}
    Hence there is a vertex $v^* \in \{v_{6k+1},\ldots,v_{n-6k}\}\cap (R^+_{T_1-F}(b)\cap (C \setminus F)) \cap (R^-_{T_1-F}(a)\cap (C \setminus F))$. By definition, $T_1-F$ contains a directed path from $b$ to $v^*$ and a directed path from $v^*$ to $a$. Hence $T_1-F$ contains a directed path from $b$ to $a$.
    \end{proofclaim}
      
    We are now ready to show that $T_1-F$ is strong.
    Let $x$ and $y$ be two vertices in $T_1-F$. It suffices to show that $y$ is reachable from $x$ in $T_1-F$.
    
    We first show that there is a path from $x$ to $B \setminus F$. Clearly, we may suppose that $x \in (A \cup C)\setminus F.$ 
    Then, since $|A| > 2|F|$, Lemma~\ref{lem:n-2F} implies that
    there is a vertex $x' \in A\setminus F$ reachable from $x$ in $T-F-B$, and so in $T_1-F$.
    By (ii), there is a directed path from $x'$ to a vertex $u$ in $B\setminus F$ in $T_1-F$.
    Hence there is a directed path $P_x$ from $x$ to $u$ in $T_1-F$.
    Similarly, by directional duality, in $T_1-F$, there is a directed path $P_y$ from a vertex $w\in A\setminus F$ to $y$.
    
    Finally, by Claim~\ref{nett}, there exists a path $Q$ from $u$ to $w$ in $T_1 - F$. Then
    $P_x Q P_y$ is a path from $x$ to $y$ in $T_1-F$.
    This proves that $T_1-F$ is strong.
\end{proof}

Finally, we give the proof of Proposition~\ref{nk1unter}, which states that $N_k(1)\geq 5k-2$ and $N_k'(1)\geq 4k-1$.
\begin{proof}[Proof of Proposition~\ref{nk1unter}]
    Let $T_1$ be a tournament of order $5k-3$ whose vertex set has a partition $(A,B,C)$ such that $T_1\langle A \rangle$ and $T_1\langle C\rangle$ are eulerian tournaments of order $2k-1$, and $A\Ra B\cup C$ and $B\Ra C$.
We shall prove that $\sinv_k(T_1) >1$.

Assume for a contradiction that there is a set $X$ of vertices such that $T_1'=\Inv(T_1;X)$ is $k$-strong.
Every vertex of $A$ (resp. $C$) has in-degree (resp. out-degree) $k-1$ in $T_1$, and so belongs to $X$.
Thus $A\cup C\subseteq X$, and so $C\Ra A$ in $T_1'$. Hence $T_1'-B$ is not strong. Since $|B|=k-1$, $T_1'$ is not $k$-strong, a contradiction.

\medskip

Now let $T_2$ be a tournament of order $4k-2$ whose vertex set has a partition $(A,B)$ such that $T_2\langle A \rangle$ and $T_2\langle B\rangle$ are eulerian tournaments of order $2k-1$, and $A\Ra B$.
We shall prove that $\sinv_k'(T_1) >1$.

Assume for a contradiction that there is a set $X$ of vertices such that $T_2'=\Inv(T_2;X)$ is $k$-arc-strong.
Every vertex of $A$ (resp. $B$) has in-degree (resp. out-degree) $k-1$ in $T_2$, and so belongs to $X$.
Thus $X=V(T_2)$, and so $T_2'$ is isomorphic to $T_2$. Hence $T_2'$ is not $k$-arc-strong, a contradiction.
\end{proof}



\subsection{Better upper bound on \texorpdfstring{$N_k(3)$}{Nk(3)}}\label{nk3sec}
This section is dedicated to the proof of Theorem~\ref{nk3}. The structure is analogous to the proof of Theorem~\ref{nk1} in Section~\ref{nk10}.
First we show a result which is very similar to Lemma~\ref{6ab}.

\begin{lemma}\label{lem:4ab}
    Let $k$ be a positive integer, $T$ a tournament on $8k$ vertices and $(A,B)$ a bipartition of $V(T)$ such that $|A|=|B|=4k$. 
    There is a family ${\cal X}$ of three subsets of $V(T)$ such that the following hold with $T'= \Inv(T ; {\cal X})$. 
    \begin{itemize}
        \item[(i)] $T'\langle A \rangle = T\langle A\rangle$ and $T'\langle B \rangle = T\langle B\rangle$ ; 
        \item[(ii)] for any $Y \subseteq V(T)$ with $|Y|\leq k-1$, $T'-Y$ contains a directed path from $a$ to $B\setminus Y$ for every $a \in A\setminus Y$; 
        \item[(iii)] for any $Y \subseteq V(T)$ with $|Y|\leq k-1$, $T'-Y$ contains a directed path from $A\setminus Y$ to $b$ for every $b \in B\setminus Y$.
    \end{itemize}
\end{lemma}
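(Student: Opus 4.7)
The plan is to prove this lemma by adapting the strategy of Lemma~\ref{6ab}, with two key adjustments: I will engineer the three inversions so that their net effect changes only arcs between $A$ and $B$ (which makes (i) automatic), and I will then exploit this to weaken the size requirements in the median-order argument, making it work with $|A|=|B|=4k$ instead of $6k$. Concretely, for subsets $A'\subseteq A$ and $B'\subseteq B$ to be chosen below, the family will be $\mathcal{X}=(A'\cup B',\,A',\,B')$. Counting, for every pair of vertices, the number of sets of $\mathcal{X}$ containing both endpoints, one checks that for $u,v\in A$ this number is $2$ if $\{u,v\}\subseteq A'$ and $0$ otherwise, for $u,v\in B$ it is $2$ if $\{u,v\}\subseteq B'$ and $0$ otherwise, and for $u\in A,\,v\in B$ it is $1$ if $u\in A'$ and $v\in B'$ and $0$ otherwise. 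Hence $T'=\Inv(T;\mathcal{X})$ coincides with $T$ on $A$ and on $B$, yielding (i), while the only arcs reversed across the bipartition are those joining $A'$ to $B'$.

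Next, I would choose $A'$ and $B'$ in direct analogy with Lemma~\ref{6ab}, only rescaled. Let $(a_1,\dots,a_{4k})$ and $(b_1,\dots,b_{4k})$ be median orders of $T\langle A\rangle$ and $T\langle B\rangle$. Set $A_0=\{a_i:i\in\{2k+1,\dots,4k\},\ |N^+_T(a_i)\cap B|<k\}$, $A_1=\{a_1,\dots,a_{2k-|A_0|}\}$, and $A'=A_0\cup A_1$; define $B_0\subseteq\{b_1,\dots,b_{2k}\}$ as the vertices with fewer than $k$ in-neighbours in $A$, set $B_1=\{b_{2k+|B_0|+1},\dots,b_{4k}\}$ and $B'=B_0\cup B_1$. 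Then $|A'|=|B'|=2k$.

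The proof of (ii) is then by the extremal-index argument of Lemma~\ref{6ab}. Fix $Y\subseteq V(T)$ with $|Y|\leq k-1$, consider the largest $i$ such that $a_i\in A\setminus Y$ admits no directed path to $B\setminus Y$ in $T'-Y$, and derive a contradiction by splitting on $i$. If $i>2k$ and $a_i\notin A_0$, then $a_i\notin A'$, so the arcs from $a_i$ to $B$ are unchanged and at least $k-(k-1)\geq 1$ of its $\geq k$ out-neighbours in $B$ lies in $B\setminus Y$. If $i>2k$ and $a_i\in A_0$, then $|N^+_T(a_i)\cap B'|\leq|N^+_T(a_i)\cap B|<k$, so after the bipartite flip $|N^+_{T'}(a_i)\cap B'|\geq|B'|-(k-1)=k+1$, again yielding a neighbour in $B\setminus Y$. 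If $i\leq 2k$, property (M2) gives $|N^+_T(a_i)\cap\{a_{i+1},\dots,a_{4k}\}|\geq\tfrac12(4k-i)\geq k$, and since arcs inside $A$ are preserved the same inequality holds in $T'$, producing some $j>i$ with $a_j\in A\setminus Y$ and $a_ia_j\in A(T')$; by maximality of $i$, $a_j$ has a path to $B\setminus Y$ in $T'-Y$, hence so does $a_i$. Condition (iii) follows from the directionally dual argument applied to the smallest $i$ such that $b_i\in B\setminus Y$ has no path from $A\setminus Y$. The one subtle point, and the reason the shrink from $6k$ to $4k$ is possible, is the hopping subcase: in Lemma~\ref{6ab} the bound $\tfrac12(6k-i)$ had to absorb a correction term counting forward arcs inside $A$ that may have been flipped by the single inversion $X$, forcing $|A|=6k$; here arcs inside $A$ are preserved by construction, so no correction is needed and $|A|=4k$ suffices, at the cost of using three inversions rather than one.
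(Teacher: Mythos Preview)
Your proposal is correct and follows essentially the same approach as the paper's own proof: the paper also takes $\mathcal{X}=(A_0\cup A_1,\;B_0\cup B_1,\;A_0\cup A_1\cup B_0\cup B_1)$ with the identical definitions of $A_0,A_1,B_0,B_1$, observes that the net effect is to reverse only the arcs between $A_0\cup A_1$ and $B_0\cup B_1$, and runs the same three-case extremal-index argument. Your explanation of why the $6k\to 4k$ reduction works (no correction term needed since arcs inside $A$ are preserved) is accurate and matches the paper's implicit reasoning.
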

\begin{proof}
    Let $(a_1,\ldots,a_{4k})$ be a median order of $T\langle A \rangle$ and let $(b_1,\ldots,b_{4k})$ be a median order of $T\langle B \rangle$. Let $A_0$ be the set of vertices in $\{a_{2k+1},\ldots,a_{4k}\}$ which have fewer than $k$ out-neighbours in $B$ in $T$. Further, let $A_1=\{a_1,\ldots,a_{2k-|A_0|}\}$. Observe that $|A_0 \cup A_1|=|A_0|+|A_1|=2k$. Similarly, let $B_0$ be the set of vertices in $\{b_1,\ldots,b_{2k}\}$ that have fewer than $k$ in-neighbours in $A$ and let $B_1=\{b_{2k+|B_0|+1},\ldots,b_{4k}\}$. Observe that $|B_0 \cup B_1|=|B_0|+|B_1|=2k$. Now let $X_1=A_0 \cup A_1$, $X_2= B_0 \cup B_1$,
    $X_3=X_1\cup X_2$, and ${\cal X} = (X_i)_{i\in [3]}$ Let $T'=\Inv(T;{\cal X})$. Observe that $T'$ is obtained from $T$ by reversing the arcs between $X_1$ and $X_2$. In particular, (i) holds. 

    We only show (ii) as (iii) follows symmetrically.
    Let $Y \subseteq V(T)$ with $|Y|\leq k-1$.  Suppose for the sake of a contradiction that (ii) does not hold. There is a largest integer $i \in [4k]$ such that $a_i \in A\setminus Y$ and $T'-Y$ does not contain a directed path from $a_i$ to $B\setminus Y$. We will distinguish several cases.

\setcounter{case}{0}
    \begin{case}
        $i \in \{2k+1,\ldots,4k\}$ and $a_i \in A\setminus A_0$.
    \end{case}
    In this case, by the choice of $A_0$, we have 
    \begin{align*}
        |(N_{T'}^+(a_i)\cap B)\setminus Y|&\geq |N_{T'}^+(a_i)\cap B|-|Y|\\
        &= |N_{T}^+(a_i)\cap B|-|Y|\\
        &\geq k-(k-1)\\
        &=1,
    \end{align*}
    so $a_i$ has an out-neighbour in $B\setminus Y$ in $T'-Y$, a contradiction.
    \begin{case}
        $i \in \{2k+1,\ldots,4k\}$ and $a_i \in A_0$.
    \end{case}
    In this case, by the choice of $A_0$, we have
    \begin{align*}
        |(N_{T'}^+(a_i)\cap B)\setminus Y|&\geq |N_{T'}^+(a_i)\cap B|-|Y|\\
        &\geq|N_{T'}^+(a_i)\cap (B_0 \cup B_1)|-|Y|\\
        &=|B_0 \cup B_1|-|N_{T'}^-(a_i)\cap (B_0 \cup B_1)|-|Y|\\
        &=|B_0 \cup B_1|-|N_{T}^+(a_i)\cap (B_0 \cup B_1)|-|Y|\\
        &\geq|B_0 \cup B_1|-|N_{T'}^+(a_i)\cap B|-|Y|\\
        &\geq 2k-(k-1)-(k-1)\\
        &=2,
    \end{align*}
    so $a_i$ has an out-neighbour in $B\setminus Y$ in $T'-Y$, a contradiction.
    
    \begin{case}
        $i \in [2k]$.
    \end{case}
    As $(a_1,\ldots,a_{4k})$ is a median order of $T\langle A \rangle=T'\langle A\rangle$, by (M2) we have 
     \begin{align*}
    |(N_{T'}^+(a_i)\cap \{a_{i+1},\ldots,a_{4k}\})\setminus Y|&\geq |N_{T'}^+(a_i)\cap \{a_{i+1},\ldots,a_{4k}\}|-|Y|\\
    & = |N_{T}^+(a_i)\cap \{a_{i+1},\ldots,a_{4k}\}|-|Y|\\
    & \geq \frac{1}{2} |\{a_{i+1},\ldots,a_{4k}\}|-|Y|\\
    & \geq \frac{1}{2} (4k-i)-(k-1)\\
    & = k+1-\frac{i}{2}\\
    & \geq k+1-k\\
    & = 1.
    \end{align*}
    Hence there is some $j>i$ such that $a_j\in A\setminus Y$ and $T'-Y$ contains the arc $a_ia_j$. By the maximality of $i$, there is a directed path from $a_j$ to $B\setminus Y$ in $T'-Y$. Hence $T'-Y$ also contains a directed path from $a_i$ to $B\setminus Y$, a contradiction.
\end{proof}

We now prove the upper bound on $N_k(3)$, which we restate here.

\nkthree*

\begin{proof}
    Let $T$ be a tournament on  $n\geq 11 k-2$ vertices.
    Let $(v_1, \ldots , v_n)$ be a median order of $T$.
    Let $B=\{v_1, \dots , v_{4k}\}$, $A=\{v_{n-4k+1}, \ldots , v_{n}\}$ and $C=V(T)\setminus (A\cup B)$. We have $|C| \geq 3k-2$.
    By Lemma~\ref{lem:4ab} applied to $T\langle A\cup B\rangle$, there is a family  ${\cal X}$ of three subsets of $A\cup B$ such that
    for $T_1 = \Inv(T;\mathcal{X})$ we have
    \begin{itemize}
        \item[(i)] $T_1 \langle A \rangle = T\langle A\rangle$ and $T_1\langle B \rangle = T\langle B\rangle$ ; 
        \item[(ii)] for any $Y \subseteq V(T)$ with $|Y|\leq k-1$, $T_1 -Y$ contains a directed path from $a$ to $B\setminus Y$ for every $a \in A\setminus Y$; 
        \item[(iii)] for any $Y \subseteq V(T)$ with $|Y|\leq k-1$, $T_1 -Y$ contains a directed path from $A\setminus Y$ to $b$ for every $b \in B\setminus Y$.
    \end{itemize}
    
    Let $T_1=\Inv(T; \mathcal{X})$.
    Let us now prove that $T_1$ is $k$-strong, which implies $\sinv_k(T) \leq 3$.
    Note that $T\langle A \cup C \rangle$, as well as $T\langle B \cup C \rangle$ are unchanged by the inversions.
    Let $F$ be a set of at most $k-1$ vertices of $T_1$. Let us show that $T_1-F$ is strong.
    Let $x$ and $y$ be two vertices in $T_1-F$. It suffices to show that $y$ is reachable from $x$ in $T_1-F$.
    
    Let us first show that there is a vertex $u \in B\setminus F$ that is reachable from $x$ in $T_1-F$. It is trivial if $x\in B$, so we may suppose that $x \in A\cup C$.
    Lemma~\ref{lem:n-2F} asserts that from any vertex of $(A \cup C)\setminus F$ one can reach a vertex of $A\setminus F$ in $T_1-F$. Thus there exists a vertex $x' \in A\setminus F$ reachable from $x$ in $T_1-F$. By (ii), there is a directed path from $x'$ to a vertex $u$ in $B\setminus F$ in $T_1 - F$.
    Hence there is a directed path $P_x$ from $x$ to $u$ in $T_1-F$.
    Similarly, by directional duality, in $T_1-F$, there is a directed path $P_y$ from a vertex $w\in A\setminus F$ to $y$.
    
    
    By Lemma~\ref{lem:n-2F}, we have $|R^+_{T\langle B \cup C\rangle - F}(u) \cap (C\setminus F)| \geq |C|-2|F\cap (B \cup C)| = |C\setminus F| -|F\cap C|-2|F \cap B|.$ Similarly, we obtain $|R^-_{T\langle A \cup C\rangle - F}(w) \cap (C\setminus F)|  \geq |C\setminus F| -|F\cap C|-2|F \cap A|.$ 
    
    This yields 
    
    \begin{align*}
    |R^+_{T\langle B \cup C\rangle - F}(u)\cap R^-_{T\langle A \cup C\rangle - F}(w)|&=|(R^+_{T\langle B \cup C\rangle - F}(u)\cap(C \setminus F))\cap (R^-_{T\langle A \cup C\rangle - F}(w)\cap(C \setminus F))|\\
    &=|R^+_{T\langle B \cup C\rangle - F}(u)\cap(C \setminus F)|+| R^-_{T\langle A \cup C\rangle - F}(w)\cap(C \setminus F)|\\ & \qquad-|(R^+_{T\langle B \cup C\rangle - F}(u)\cap(C \setminus F))\cap (R^-_{T\langle A \cup C\rangle - F}(w)\cap(C \setminus F))|\\
    &\geq (|C\setminus F| -|F\cap C|-2|F \cap B|)+(|C\setminus F| -|F\cap C|-2|F \cap A|)-|C\setminus F|\\
    &=|C\setminus F|-2|F|\\
    &\geq|C|-3|F|\\
    &\geq (3k-2)-3(k-1)\\
    &=1.
    \end{align*}
    Thus there is a vertex of $C\setminus F$ in $R^+_{T\langle B \cup C\rangle - F}(u)\cap R^-_{T\langle A \cup C\rangle - F}(w)$, and so there exists a directed path $Q$ from $u$ to $w$ in $T_1 - F$. 
    Then $P_x Q P_y$ is a directed path from $x$ to $y$ in $T_1-F$.
\end{proof}

All the results of the previous subsections imply the following.
\begin{corollary}\label{cor:m_k-upper}
    $m_k(n) \leq
    \left\{ 
    \begin{array}{ll}
        2k &\text{if~ $ n\geq 2k+1$,}\\
        3  &\text{if~ $ n \geq 11k - 2$,}\\
        1  &\text{if~  $ n\geq 19k - 2$.}
    \end{array}\right.$
\end{corollary}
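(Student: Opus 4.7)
The plan is simply to assemble the three already-established bounds and read them off against the definition of $m_k(n)$. Recall that by definition, $m_k(n) = \max\{\sinv_k(T) \mid T~\text{tournament of order}~n\}$, so each line of the corollary follows as soon as we have a uniform upper bound on $\sinv_k(T)$ that is valid for every tournament $T$ whose order is at least the stated threshold.

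For the first line, Theorem~\ref{thm:M<2k} gives $M_k \leq 2k$, and since $M_k = \max\{\sinv_k(T) \mid T~\text{tournament of order at least}~2k+1\}$ this bound dominates $\sinv_k(T)$ for every tournament of order $n \geq 2k+1$; in particular $m_k(n) \leq 2k$ for all such $n$. For the second line, Theorem~\ref{nk3} asserts $N_k(3) \leq 11k-2$, and by the very definition of $N_k(3)$ as the smallest integer beyond which $\sinv_k(T) \leq 3$ for every tournament $T$ of that order, we immediately deduce $m_k(n) \leq 3$ for every $n \geq 11k-2$. For the third line, Theorem~\ref{nk1} states that every tournament on $n \geq 19k-2$ vertices satisfies $\sinv_k(T) \leq 1$, which means exactly that $m_k(n) \leq 1$ for every $n \geq 19k-2$.

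There is no genuine obstacle here: the corollary is a pure assembly statement that packages the three extremal theorems of Sections~\ref{nk10} and~\ref{nk3sec} (together with the general bound from Section~\ref{sec:53}) into a single threshold bound on the function $m_k$. The proof is therefore a one-line citation of Theorems~\ref{thm:M<2k},~\ref{nk3}, and~\ref{nk1}, applied case by case to the three ranges of $n$.
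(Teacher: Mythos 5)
Your proposal is correct and matches the paper's own (essentially implicit) justification: the corollary is stated with the remark that it follows from the results of the preceding subsections, namely Theorem~\ref{thm:M<2k} for the first line, Theorem~\ref{nk3} for the second, and Theorem~\ref{nk1} for the third. Your case-by-case citation of exactly these three theorems, unwound through the definitions of $M_k$, $N_k(3)$, and $m_k(n)$, is precisely the intended argument.
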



\subsection{Upper bounds for \texorpdfstring{$k$}{k} large}\label{epsgross}
In this section, we show that if a tournament has at least $2k+1+\epsilon k$ vertices for some positive integer $k$ and some $\epsilon>0$, then it can be made $k$-strong by inverting a family of sets whose cardinality only depends on $\epsilon$.
The proof consists in drawing this family uniformly at random, under the constraint that every vertex is contained in at least one of the sets. 

To analyse this procedure we will need Chernoff's Bound (Proposition~\ref{chernoff}) as well as the two following technical lemmas.

\begin{lemma}\label{lemma:proba_2}
    Let $\vec{u} \neq \vec{v} \in \mathbb{F}_2^t \setminus \{\vec{0}\}$ and $x,y \in \mathbb{F}_2$ be fixed, and let $\vec{w} \in \mathbb{F}_2^t \setminus \{\vec{0}\}$ 
    be drawn uniformly at random. Then $\Pr[\vec{u} \cdot \vec{w} = x, \vec{v} \cdot \vec{w}=y] \geq \frac{1}{4}-\frac{3}{4}\frac{1}{2^t-1}$.
\end{lemma}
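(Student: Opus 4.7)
The plan is to reduce the computation to two short steps: first count solutions of the two linear equations over the full space $\mathbb{F}_2^t$, then subtract the forbidden vector $\vec{0}$. The key observation is that in $\mathbb{F}_2$, two nonzero vectors are linearly dependent if and only if they are equal, so the hypothesis $\vec{u} \neq \vec{v}$ together with $\vec{u}, \vec{v} \neq \vec{0}$ immediately gives that $\{\vec{u}, \vec{v}\}$ is linearly independent. Consequently, the $\mathbb{F}_2$-linear map $L \colon \mathbb{F}_2^t \to \mathbb{F}_2^2$ defined by $L(\vec{w}) = (\vec{u}\cdot \vec{w},\, \vec{v}\cdot \vec{w})$ has rank $2$.

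Given this, I would invoke (a trivial form of) Proposition~\ref{linalg}: for any prescribed $(x,y)\in \mathbb{F}_2^2$, the affine fiber $L^{-1}(x,y)$ has cardinality exactly $2^{t-2}$. The only remaining subtlety is that $\vec{w}$ is drawn uniformly from the $2^t - 1$ \emph{nonzero} vectors, not from all of $\mathbb{F}_2^t$. Since $L(\vec{0}) = (0,0)$, the vector $\vec{0}$ lies in the fiber $L^{-1}(x,y)$ if and only if $(x,y) = (0,0)$, so the number of admissible $\vec{w}$ in either case is at least $2^{t-2} - 1$.

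It then remains to verify the arithmetic identity
\[
\frac{2^{t-2} - 1}{2^t - 1} \;=\; \frac{1}{4} - \frac{3}{4(2^t-1)},
\]
which follows from $4(2^{t-2}-1) = 2^t - 4 = (2^t-1) - 3$. Combining this with the lower bound $\Pr[\vec{u}\cdot \vec{w}=x,\,\vec{v}\cdot \vec{w}=y] \geq (2^{t-2}-1)/(2^t-1)$ yields the lemma. There is no real obstacle here; the only thing one must be careful about is not forgetting the $-1$ in the numerator in the worst case $(x,y) = (0,0)$, which is precisely what produces the correction term $-\frac{3}{4(2^t-1)}$ and makes the bound tight.
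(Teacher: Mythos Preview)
Your proof is correct and follows essentially the same approach as the paper's: both observe that the linear map $\vec{w}\mapsto(\vec{u}\cdot\vec{w},\vec{v}\cdot\vec{w})$ is surjective (equivalently, has rank $2$), count $2^{t-2}$ solutions in $\mathbb{F}_2^t$, subtract one for the possibly excluded $\vec{0}$, and verify the arithmetic. You add a little more justification (why $\vec{u},\vec{v}$ are linearly independent, and the check of the identity), but the argument is the same.
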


\begin{proof}
    As $\vec{u}\neq \vec{v}$, the mapping $\mathbb{F}_2^t \to \mathbb{F}_2^2, \vec{w} \mapsto (\vec{u} \cdot \vec{w}, \vec{v} \cdot \vec{w})$ is surjective and linear.
    As a consequence, there are $\frac{1}{4}2^t$ vectors $\vec{w} \in \mathbb{F}_2^t$ which satisfy $\vec{u} \cdot \vec{w}=x$ and $\vec{v} \cdot \vec{w} = y$.
    Thus by possibly removing the solution $\vec{w}=0$, we obtain $\Pr[\vec{u} \cdot \vec{w} = x, \vec{v} \cdot \vec{w}=y] \geq \frac{2^{t-2}-1}{2^t-1} =
    \frac{1}{4}-\frac{3}{4}\frac{1}{2^t-1}$.
\end{proof}

\begin{lemma}\label{lemma:proba_3}
    Let $\epsilon >0$, let $t \geq 16$ be an integer, and let $k \geq \frac{8t}{\epsilon}$ be an integer.
    Let $U,V \in (\mathbb{F}_2^{t} \setminus\{\vec{0}\})^{\lceil\epsilon k/8\rceil}$ be drawn uniformly at random and 
    $W \in \mathbb{F}_2^{\lceil\epsilon k/8\rceil \times \lceil\epsilon k/8\rceil}$ be fixed.
    Then $\Pr[U^\top \cdot V = W] \leq 2^{-\epsilon t k/128}$.
\end{lemma}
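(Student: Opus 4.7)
The plan is to condition on $V$, bound the conditional probability by $2^{(1-\operatorname{rank}(V))m}$ via linear algebra over $\mathbb{F}_2$, and then control the left tail of $\operatorname{rank}(V)$ by a union bound over low-dimensional subspaces.

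Set $m = \lceil \epsilon k / 8 \rceil$ and view $U, V$ as $t \times m$ matrices with columns $u_1, \dots, u_m$ and $v_1, \dots, v_m$ respectively. Writing $W^{(i)} \in \mathbb{F}_2^m$ for the transpose of the $i$-th row of $W$, the event $U^\top V = W$ amounts to the conjunction of the $m$ conditions $V^\top u_i = W^{(i)}$, which are independent given $V$. For each $i$, the set $\{u \in \mathbb{F}_2^t : V^\top u = W^{(i)}\}$ is either empty or a coset of $\ker V^\top$, hence has size $2^{t - \operatorname{rank}(V)}$. Removing $u = \vec 0$ and dividing by $|\mathbb{F}_2^t \setminus \{\vec 0\}| = 2^t - 1$ yields $\Pr[V^\top u_i = W^{(i)} \mid V] \leq 2^{t - \operatorname{rank}(V)}/(2^t - 1) \leq 2^{1 - \operatorname{rank}(V)}$ (using $t \geq 1$). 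Multiplying over $i$ gives $\Pr[U^\top V = W \mid V] \leq 2^{(1 - \operatorname{rank}(V))m}$.

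Next, fix the threshold $r_0 = \lfloor t/2 \rfloor$ and split on $\operatorname{rank}(V) \leq r_0$ versus $\operatorname{rank}(V) \geq r_0 + 1$ to obtain
\[
\Pr[U^\top V = W] \;\leq\; \Pr[\operatorname{rank}(V) \leq r_0] + 2^{-r_0 m}.
\]
Using the standard estimate $\binom{t}{r_0}_2 \leq 2^{r_0(t - r_0 + 1)}$ on the Gaussian binomial together with $(2^{r_0} - 1)/(2^t - 1) \leq 2^{r_0 - t + 1}$ for the probability that a uniformly random nonzero vector lies in a fixed $r_0$-dimensional subspace, a union bound (using independence of the columns of $V$) gives
\[
\Pr[\operatorname{rank}(V) \leq r_0] \;\leq\; 2^{r_0(t - r_0 + 1) + (r_0 - t + 1)m}.
\]

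Finally, the hypotheses $t \geq 16$ and $k \geq 8t/\epsilon$ imply $m \geq t$ and $\epsilon tk/128 \leq tm/16$. Plugging $r_0 \approx t/2$ into the two exponents, one verifies that each of the two terms is at most $\tfrac{1}{2}\, 2^{-\epsilon tk/128}$; the binding elementary inequality reduces to $3t^2 \geq 24t + 16$, which holds comfortably for $t \geq 16$. Summing yields the claimed bound $\Pr[U^\top V = W] \leq 2^{-\epsilon tk/128}$. The main obstacle is that the constants are tight: the exponent $\epsilon tk/128$ coincides exactly with $tm/16$ when $\epsilon k/8$ happens to be an integer, so there is no slack to absorb the loss of a factor $2$ from adding two error terms; the balanced choice $r_0 = \lfloor t/2 \rfloor$ (rather than something close to $0$ or to $t$) is what keeps both terms individually below $\tfrac{1}{2}\, 2^{-\epsilon tk/128}$ in the regime $t \geq 16$.
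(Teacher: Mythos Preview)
Your proof is correct and follows essentially the same route as the paper: condition on one of the two random matrices, split on whether its rank exceeds $\lfloor t/2\rfloor$, bound the low-rank probability and the conditional probability in the high-rank case, and combine. The only cosmetic difference is in the low-rank estimate --- you union-bound over $r_0$-dimensional subspaces via the Gaussian binomial, whereas the paper selects $\lfloor t/2\rfloor$ spanning columns and bounds the probability that the remaining $m-\lfloor t/2\rfloor$ columns fall in their span --- but both arguments yield bounds of the same shape and the final arithmetic checks out.
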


\begin{proof}
    Note that since $k \geq \frac{8t}{\epsilon}$, we have 
    $\lceil \epsilon k/8 \rceil \geq t \geq t/2+1$.
    First we bound the probability that $\rk(U) \leq t/2$. 
    If $U$ has rank at most $t/2$, then there is a choice of $\lfloor t/2\rfloor$ columns of $U$ such that all the other ones are in the linear span of these selected columns.
    Since the linear span of $\lfloor t/2\rfloor$  vectors has dimension at most $t/2$, and so size at most $2^{t/2}$, we deduce the following.
    \[
    \everymath={\displaystyle}
    \renewcommand{\arraystretch}{2.5}
    \begin{array}{r l l}
        \Pr\left[\rk(U) \leq t/2\right] &\leq
        \binom{\lceil \epsilon k/8 \rceil}{\lfloor t/2\rfloor}\left(\frac{2^{\lfloor t/2\rfloor}-1}{2^t -1} \right)^{\lceil \epsilon k/8 \rceil -\lfloor t/2\rfloor } &\\
        &\leq \binom{\lceil\epsilon k/8\rceil}{\lfloor t/2\rfloor}\left(\frac{2^{t/2}-1}{2^t -1} \right)^{\epsilon k/8-t/2} & \\
        &\leq 2^{\epsilon k/8+1}\left(\frac{2^{t/2}-1}{2^t -1} \right)^{\epsilon k/16} & \text{ because $\frac{t}{2} \leq \frac{\epsilon k}{16}$ since } k\geq \frac{8t}{\epsilon} \\
        &\leq 2^{\epsilon k/4} (2^{t/2})^{-\epsilon k/16} & \\
        &\leq 2^{\epsilon t k/64} 2^{-\epsilon t k/32} & \text{ because } t \geq 16 \\
        &= 2^{- \epsilon t k/64}. & \\
    \end{array}    
    \] 
    Now we assume that $\rk(U)> t/2$.
    Then for every column $v$ of $V$, $v$ must be chosen in an affine space of dimension at most $t - \lfloor t/2\rfloor -1 \leq t/2$.
    It follows that
    \[
    \begin{split}
        \Pr[U^\top \cdot V = W \mid \rk(U) > t/2] & \leq 
        \left(\frac{2^{t/2}-1}{2^{t}-1} \right)^{\lceil\epsilon k/8 \rceil} \\
        &\leq (2^{-t/2})^{\epsilon k/8} \\
        & = 2^{-\epsilon t k/16}. \\
    \end{split}
    \]
    Therefore
    \[
    \begin{split}
        \Pr[U^\top \cdot V = W] & \leq \Pr\left[\rk(U) \leq t/2\right] + \Pr[U^\top \cdot V = W \mid \rk(U) > t/2]\\
        &\leq 2^{- \epsilon t k/64} + 2^{-\epsilon t k/16}\leq 2\cdot 2^{-\epsilon t k/64}.
    \end{split}
    \]
    We know that $\epsilon t k \geq 8 t^2 \geq 2 \cdot 64$, thus $2^{-\epsilon t k / 64} \leq 1/4$. As $2x \leq \sqrt{x}$ for any $x \in [0, 1/4]$, we end with $\Pr[U^\top \cdot V = W]  \leq 2 \cdot 2^{-\epsilon t k/64} \leq 2^{-\epsilon t k/128}$.
\end{proof}
For technical reasons, we prove the following seemingly weaker restatement of Theorem~\ref{thm:2+eps}.

\begin{theorem}\label{thm:2+eps+2}
    There exists a function $f\colon \mathbb{R}_{>0} \to \mathbb{N}$ such that for every $\epsilon>0$ and every positive integer $k$, 
    if $T$ is an $n$-vertex tournament with $n \geq 2k+ 2 \epsilon k+2$, then $\sinv_k(T) \leq f(\epsilon)$.
\end{theorem}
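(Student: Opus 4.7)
The plan is to combine prior results with a probabilistic construction of the inversion family. First reduce to a bounded regime: for $k \leq k_0(\epsilon)$, Theorem~\ref{thm:M<2k} already gives $\sinv_k(T) \leq 2k \leq 2k_0(\epsilon)$, and for $n \geq 19k-2$, Theorem~\ref{nk1} gives $\sinv_k(T) \leq 1$. Hence we may restrict to $k > k_0(\epsilon)$ and $2k+2\epsilon k+2 \leq n \leq 19k-3$, a regime in which $n = \Theta(k)$.

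In this regime, choose $t = t(\epsilon)$ and draw a vector $\vec{v}$ uniformly and independently from $\mathbb{F}_2^t \setminus \{\vec{0}\}$ for each $v \in V(T)$, setting $X_i = \{v : \vec{v}_i = 1\}$ for $i \in [t]$. In $T' = \Inv(T; X_1, \dots, X_t)$, the arc between two distinct vertices $u$ and $v$ is reversed (with respect to $T$) iff $\vec{u} \cdot \vec{v} = 1$. The goal is to show $\Pr[T' \text{ is not } k\text{-strong}] < 1$, which forces $\sinv_k(T) \leq t = f(\epsilon)$.

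A witness that $T'$ is not $k$-strong is a triple $(V_1, V_2, S)$ partitioning $V(T)$ with $V_1, V_2 \neq \emptyset$, $|S| \leq k-1$, and no arc from $V_1$ to $V_2$ in $T'$; by symmetry assume $a := |V_1| \leq |V_2|$, and union-bound by $a$. \emph{Case A:} if $a \geq \lceil \epsilon k/8\rceil$ (both parts large), the bad event translates to a bilinear equation $U^\top V = W$ over $\mathbb{F}_2$ (the columns of $U, V$ being $\vec{u}, \vec{v}$ for $u \in V_1, v \in V_2$, and $W$ being determined by the orientations in $T$), so Lemma~\ref{lemma:proba_3} bounds its probability by $2^{-\epsilon tk/128}$; since there are at most $3^n \leq 3^{19k}$ triples, picking $t \geq C/\epsilon$ for a large absolute constant $C$ makes this contribution vanish. \emph{Case B:} if $a < \lceil \epsilon k/8 \rceil$ ($V_1$ small), the bad event for a fixed $V_1$ reduces to $|D(V_1)| \geq n-a-k+1$ where $D(V_1) := \{v \notin V_1 : V_1 \subseteq N^+_{T'}(v)\}$. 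Conditional on the vectors on $V_1$, the indicators $\mathbb{1}[v \in D(V_1)]$ are independent Bernoullis of success probability at most $2^{t-\rk(U)}/(2^t-1)$, which in the worst (rank-$1$) case is $\tfrac{1}{2} + 2^{-t}$. A short calculation shows $(n-a-k+1) - (n-a)(\tfrac{1}{2} + 2^{-t}) \geq \epsilon k/2$ provided $t \geq \log_2(1/\epsilon) + O(1)$, so Chernoff's bound (Proposition~\ref{chernoff}) gives a per-$V_1$ probability $\exp(-\Omega(\epsilon^2 k))$, beating the $\binom{n}{a} \leq n^a$ union-bound factor for $a = O(\log k)$; for larger $a$, finer multiplicative Chernoff bounds at each rank $r \in [1, \min(a,t)]$, combined with the probability $\leq 2^{-(t-r)(a-r)}$ of rank deficiency, close the summation.

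The main obstacle is reconciling the two regimes under a single constant $t = t(\epsilon)$: Case~A demands $t = \Omega(1/\epsilon)$ to overcome the $3^{19k}$ triple count, while the rank-deficient sub-cases of Case~B demand $t \geq \log_2(1/\epsilon) + O(1)$ so that the effective Bernoulli parameter stays close to $\tfrac{1}{2}$. The most delicate book-keeping occurs in the intermediate range $a \in [2, \lceil \epsilon k/8\rceil - 1]$, where one must split by $\rk(U)$ and trade off the rank-deficiency probability against the conditional Chernoff tail; taking $t = \Theta(1/\epsilon)$ uniformly and $k_0(\epsilon)$ large enough turns every piece of the union bound into $o(1)$, and setting $f(\epsilon) := \max\{2k_0(\epsilon), t(\epsilon)\}$ yields the theorem.
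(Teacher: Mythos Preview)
Your high-level strategy---reduce to $k$ large and $n=\Theta(k)$, draw random nonzero vectors in $\mathbb{F}_2^t$ with $t=t(\epsilon)$, and show $\Pr[T'\text{ not }k\text{-strong}]<1$---matches the paper exactly, as does the use of Lemma~\ref{lemma:proba_3} when both sides of a putative bad cut are large. Where you diverge is in the decomposition of the failure event. You union-bound directly over all witnesses $(V_1,S)$, splitting by $a=|V_1|$; the paper instead isolates three global events $E_1,E_2,E_3$ (a single vector value taken by almost all vertices; a pair $u,v$ with $\vec u\neq\vec v$ and thin common neighbourhoods; a pair of moderate-size sets with no large directed matching) and proves, by an explicit path-building argument, that $\lnot E_1\wedge\lnot E_2\wedge\lnot E_3$ already forces $T'$ to be $k$-strong. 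This route sidesteps your Case~B entirely: once $\lnot E_1$ guarantees two vertices with distinct vectors on opposite sides of any separation, $\lnot E_2$ and $\lnot E_3$ directly supply $k$ internally vertex-disjoint paths between them.

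Your Case~B is plausible but the sketch is thin exactly where the work lies. The rank-deficiency bound you quote, $\Pr[\rk(U)\le r]\le 2^{-(t-r)(a-r)}$, drops a $\binom{a}{r}$ factor that matters when $a$ is a fixed fraction of $\epsilon k$; and ``finer multiplicative Chernoff bounds at each rank $r$ \ldots\ close the summation'' hides a genuine double sum over $a\in[1,\lceil\epsilon k/8\rceil)$ and $r\in[1,\min(a,t)]$ whose convergence really uses $t=\Theta(1/\epsilon)\gg\log(1/\epsilon)$ together with an interpolation between an additive Chernoff tail (for $r$ small, where the Bernoulli parameter is near $\tfrac12$) and a multiplicative one (for $r$ large, where it is near $2^{-r}$). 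This can be carried through, but it is not a one-line remark; the paper's $E_1,E_2,E_3$ decomposition is precisely what replaces this bookkeeping by three clean Chernoff/bilinear estimates.
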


It is not difficult to see that Theorem~\ref{thm:2+eps+2} actually implies Theorem~\ref{thm:2+eps}. 
Indeed, given a function $f$ like in Theorem~\ref{thm:2+eps+2} at hand, define $f':\mathbb{R}_{>0} \to \mathbb{N}$ by $f'(\epsilon)=\max\{\frac{4}{\epsilon},f(\frac{\epsilon}{2})\}$. 
Let $T$ be a tournament with $|V(T)|\geq 2k+1 +\epsilon k$ for some positive integer $k$. 
If $k \leq \frac{2}{\epsilon}$, then Theorem~\ref{thm:M<2k} yields $\sinv_k(T)\leq 2k\leq \frac{4}{\epsilon}\leq f'(\epsilon)$. 
Otherwise, we have $|V(T)|\geq 2k+1+\epsilon k\geq 2k+2 + \frac{\epsilon}{2}k$, so $\sinv_k(T)\leq f(\frac{\epsilon}{2})\leq f'(\epsilon)$ by Theorem~\ref{thm:2+eps+2}.

\begin{proof}

Without loss of generality, we may assume $\epsilon \leq \frac{1}{3}$.
Let $C$ be a constant such that $\sinv_k(T) \leq 1$ if $n \geq Ck$, which exists by Theorem~\ref{nk1}. 
Let $t$ be the smallest integer such that 
\begin{itemize}
    \item $t\geq 16$, 
    \item $t \geq \log(1+\frac{48}{\epsilon})$, and
    \item $t \geq \frac{128}{\epsilon}(2C+2+\epsilon/4) +16$.
\end{itemize}
Clearly, $t$ is well defined and depends only on $\epsilon$.
Let $k_0$ be the smallest integer such that for every $k' \geq k_0$,
\begin{equation}\label{eq:condition_k_large_enough}
    (2^t-1) \exp\left(-\epsilon^2\frac{(2+\epsilon)k'}{24}\right) + 3(Ck')^2\exp\left(-\epsilon^2\frac{(2+\epsilon)k'}{4096}\right) + 2^{-k'} <1.
\end{equation}

We now prove the statement for $f(\epsilon)=\max\{t,2k_0-2,\lceil\frac{16t}{\epsilon}\rceil-2\}$. If $k < k_0$, then we conclude directly using Theorem~\ref{thm:M<2k} that $\sinv_k(T) \leq 2k\leq 2k_0-2\leq f(\epsilon)$. Similarly, if $k \leq \frac{8t}{\epsilon}-1$, then we conclude by Theorem~\ref{thm:M<2k} that $\sinv_k(T) \leq \frac{16t}{\epsilon}-2\leq f(\epsilon)$. 
 Moreover, if $n \geq Ck$, we have $\sinv_k(T)\leq 1 \leq f(\epsilon)$.
Henceforth, we may assume $k\geq \max\{k_0, \frac{8t}{\epsilon}-1\}$ and $n \leq Ck$.
\medskip


For every vertex $u \in V(T)$, we choose uniformly and independently at random a vector $\vec{u} \in \mathbb{F}_2^t \setminus \{\vec{0}\}$.
For $i\in [t]$, let $X_i = \{u \in V(T) \mid \vec{u}_i =1\}$.
We will prove that with positive probability, the tournament $T' = \Inv(T; X_1, \dots, X_t)$ is $k$-strong.
Note that for every arc $uv \in A(T)$, we have $uv\in A(T')$ if and only if $\vec{u} \cdot \vec{v}=0$.
For two disjoint subsets $A$ and $B$ of vertices of $T'$, a {\bf directed $(A,B)$-matching} is a set of arcs with tails in $A$, heads in $B$, and without common tail or common head.
\begin{claim}\label{claim:decompose_into_events_A_B_C_}
    If $T'$ is not $k$-strong, then at least one of the following events occurs:
    \begin{enumerate}[label=\Alph*]
        \item[$E_1$]: there is a vector $\vec{z} \in \mathbb{F}_2^t \setminus \{\vec{0}\}$ such that $|\{v \in V(T)\mid \vec{v} \neq \vec{z}\}|< k$,
        \item[$E_2$]: there are $u,v \in V(T)$ with $\vec{u} \neq\vec{v} $ such that $\min\{|N_{T'}^+(u)\cap N_{T'}^-(v)|,|N_{T'}^+(u)\cap N_{T'}^+(v)|,|N_{T'}^-(u)\cap N_{T'}^-(v)|\}\leq \left(1+\frac{\epsilon}{4}\right)\frac{k}{2}$,
        \item[$E_3$]: there are disjoint sets $A,B \subseteq V(T')$ with $|A|,|B| \geq \left(1+\frac{\epsilon}{4}\right)\frac{k}{2}$ 
        with no directed $(A,B)$-matching of size at least $\frac{k}{2}$.
    \end{enumerate}
\end{claim}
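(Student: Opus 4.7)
The plan is to extract a combinatorial certificate of failure from the assumption that $T'$ is not $k$-strong and then match features of this certificate to the three listed events. Concretely, if $T'$ is not $k$-strong, one can find $S \subseteq V(T')$ with $|S| \leq k-1$ together with a partition $(U, W)$ of $V(T') \setminus S$ into nonempty parts such that $T'$ has no arc from $U$ to $W$. The key structural observation, to be established at the outset, is that $N_{T'}^+(u) \subseteq (U \cup S)\setminus\{u\}$ for every $u \in U$ and symmetrically $N_{T'}^-(w) \subseteq (W \cup S)\setminus\{w\}$ for every $w \in W$. Since $U, W, S$ are pairwise disjoint, this forces the inclusion
\[
N_{T'}^+(u) \cap N_{T'}^-(w) \subseteq S \qquad \text{for every } u \in U,\ w \in W.
\]

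First I would dispose of the case where both $|U|, |W| \geq (1+\epsilon/4)k/2$: taking $A = U$ and $B = W$ gives disjoint sets of the required size admitting no $(A, B)$-matching at all, so $E_3$ occurs. Otherwise assume without loss of generality that $|U| < (1+\epsilon/4)k/2$. If every pair $(u, w) \in U \times W$ has $\vec{u} = \vec{w}$, then all vertices of $U \cup W$ share one vector $\vec{z}$; since $|U \cup W| = n - |S| \geq n - k + 1$, at most $k-1$ vertices have $\vec{v} \neq \vec{z}$, and $E_1$ occurs. Otherwise fix $u \in U$ and $w \in W$ with $\vec{u} \neq \vec{w}$. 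If $|S| \leq (1+\epsilon/4)k/2$, the displayed inclusion immediately yields $|N_{T'}^+(u) \cap N_{T'}^-(w)| \leq (1+\epsilon/4)k/2$, which is $E_2$ for this pair. Otherwise, pad $U$ by a subset $S' \subseteq S$ of size $\lceil(1+\epsilon/4)k/2\rceil - |U|$ and set $A = U \cup S'$, $B = W$; using $n \geq 2k + 2\epsilon k + 2$ and the upper bounds on $|U|, |S|$, one verifies $|A|, |B| \geq (1+\epsilon/4)k/2$. Any $(A, B)$-matching has all its tails in $S'$ (since $U \not\to W$), so its size is at most $|S'|$, and when $|S'| < k/2$, which corresponds to $|U| > \epsilon k/8$, this yields $E_3$.

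The main obstacle is the residual regime where $|U| \leq \epsilon k/8$ while at the same time $|S| > (1+\epsilon/4)k/2$, so that neither the direct intersection bound nor the single padding argument suffices. To close this case I would combine $\neg E_1$ (which forces $W$ to contain two vertices $w_1, w_2$ with distinct vectors, since otherwise a dominant vector would appear $\geq n-k+1$ times) with $\neg E_2$ applied to the pair $(w_1, w_2)$ and a second application of $\neg E_3$ to an $(A, B)$-split of the form $A = S$, $B = W$ (both of size $\geq (1+\epsilon/4)k/2$), deriving a contradiction from the smallness of $|U|$ combined with the common-neighborhood inclusions $N_{T'}^-(w_i) \subseteq (W \cup S)\setminus\{w_i\}$. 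I expect the delicate bookkeeping around the compatibility of the thresholds $(1+\epsilon/4)k/2$ and $k/2$ and the slack provided by $n \geq 2k+2\epsilon k + 2$ in this corner of the case analysis to be the most technical step.
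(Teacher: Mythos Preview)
There is a genuine gap in your residual case. Your claim that $\neg E_1$ forces $W$ to contain $w_1,w_2$ with $\vec{w}_1\neq\vec{w}_2$ is not justified: if every vertex of $W$ carried a single vector $\vec{z}$, then $\{v:\vec{v}\neq\vec{z}\}\subseteq U\cup S$, and $|U|+|S|\le \epsilon k/8+(k-1)$ may well be $\geq k$, so $\neg E_1$ is not contradicted. Without $w_1,w_2$ the remainder of your sketch has no starting point, and the fallback of applying $\neg E_3$ to $(S,W)$ only hands you a matching from $S$ into $W$ about which you have no structural control. The case can in fact be closed with the pair $(u,w)$ you already fixed: since $N_{T'}^+(u)\subseteq (U\cup S)\setminus\{u\}$, in the residual regime $|N_{T'}^+(u)|\le |U|+|S|-1\le \epsilon k/8+k-2<(1+\tfrac{\epsilon}{4})k$; but $N_{T'}^+(u)$ is the disjoint union of $N_{T'}^+(u)\cap N_{T'}^-(w)$ and $N_{T'}^+(u)\cap N_{T'}^+(w)$, so one of these has size at most $(1+\tfrac{\epsilon}{4})\tfrac{k}{2}$, which already gives $E_2$.

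The paper's argument is quite different and avoids your whole case split on $|U|,|W|,|S|$. It works contrapositively: $\neg E_1$ is used once, exactly as in your step 2a, to locate $u\in U$ and $w\in W$ with $\vec{u}\neq\vec{w}$. Then $\neg E_2$ makes all three sets $N^+_{T'}(u)\cap N^-_{T'}(w)$, $N^+_{T'}(u)\cap N^+_{T'}(w)$, $N^-_{T'}(u)\cap N^-_{T'}(w)$ have size exceeding $(1+\tfrac{\epsilon}{4})\tfrac{k}{2}$, and the key move is to apply $\neg E_3$ not to pieces of $(U,S,W)$ but to $A=N^+_{T'}(u)\cap N^+_{T'}(w)$ and $B=N^-_{T'}(u)\cap N^-_{T'}(w)$. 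The resulting matching of size $\ge k/2$ yields that many internally disjoint $(u,w)$-paths $u\,x\,y\,w$ of length~$3$; together with the more than $k/2$ length-$2$ paths through $N^+_{T'}(u)\cap N^-_{T'}(w)$, one obtains at least $k$ internally vertex-disjoint $(u,w)$-paths, each of which must meet $S$, contradicting $|S|\le k-1$. The idea you missed is that $E_3$ is tailored so that its \emph{negation}, applied to the common neighbourhoods, manufactures the extra paths; it is not meant to be exhibited directly from the cut structure.
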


\begin{proofclaim}
    Assume that none of~$E_1$,~$E_2$ and~$E_3$ holds.
    Suppose for a contradiction that there is a set $X$ of at most $k-1$ vertices, and a partition $(V_1,V_2)$ of $V(T'-X)$ into nonempty sets such that $V_2 \Rightarrow V_1$ in $T'-X$.
    Since~$E_1$ does not hold, there exist $x,y\in V_1 \cup V_2$  with $\vec{x} \neq \vec{y}$. If both $x$ and $y$ are in $V_1$ (resp. $V_2$), consider $v \in V_2$ (resp. $u \in V_1$) and either $\vec{x}\neq\vec{v}$ or $\vec{y}\neq\vec{v}$ (resp. $\vec{x}\neq\vec{u}$ or $\vec{y}\neq\vec{u}$).
    If $x \in V_1$ and $y \in V_2$ we set $u=x$ and $v=y$,
    and if $y \in V_2$ and $x \in V_1$ we set $u=y$ and $v=x$.
    In all cases, there are $u\in V_1$ and $v \in V_2$ with $\vec{u} \neq \vec{v}$.

    Now, as ~$E_2$ does not hold, we have $|N_{T'}^+(u)\cap N_{T'}^-(v)|,|N_{T'}^+(u)\cap N_{T'}^+(v)|,|N_{T'}^-(u)\cap N_{T'}^-(v)|\geq(1+\epsilon/4)\frac{k}{2}$.
    Finally, as~$E_3$ does not hold, 
    there is a directed $(N_{T'}^+(u)\cap N_{T'}^+(v), N_{T'}^-(u)\cap N_{T'}^-(v))$-matching $M$ of size at least $k/2$ in $T'$.
    For every arc $e=xy\in M$, observe that $P_e=uxyv$ is a directed $(u,v)$-path in $T'$. 
    Furthermore, for every $x \in N_{T'}^+(u)\cap N_{T'}^-(v)$, observe that $P_x=uxv$ is a directed $(u,v)$-path in $T'$. 
    This yields a collection of at least $k$ internally vertex-disjoint $(u,v)$-paths in $T'$,
    a contradiction since every $(u,v)$-path meets $X$ which has size at most $k-1$.
\end{proofclaim}

We will show that with positive probability none of the events ~$E_1$,~$E_2$ and~$E_3$ occurs.

\begin{claim}\label{claim:proba_A}
    $\Pr(E_1) \leq (2^t-1) \exp\left(-\epsilon^2\frac{n}{16}\right)$
\end{claim}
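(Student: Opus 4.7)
The plan is to apply a union bound over the $2^t-1$ choices of $\vec{z}\in \mathbb{F}_2^t\setminus\{\vec{0}\}$ and, for each such $\vec{z}$, estimate the probability of the event $B_{\vec{z}}$ that $|\{v\in V(T):\vec{v}\neq\vec{z}\}|<k$ via the Chernoff bound of Proposition~\ref{chernoff}.

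First, I would fix $\vec{z}$ and set $W_{\vec{z}}=|\{v\in V(T):\vec{v}\neq\vec{z}\}|$. Since the vectors $\{\vec{v}\}_{v\in V(T)}$ are drawn independently and uniformly from $\mathbb{F}_2^t\setminus\{\vec{0}\}$, we have $W_{\vec{z}}\sim \Bin(n,p)$ with $p=1-\tfrac{1}{2^t-1}$. The constraint $t\geq \log(1+48/\epsilon)$ forces $p\geq 1-\epsilon/48 \geq 47/48$, while the hypothesis $n\geq 2k+2\epsilon k+2$ forces $k\leq n/(2+2\epsilon)$. Setting $\delta=1-k/(pn)$, the key arithmetic step is to verify $(2+2\epsilon)(1-\epsilon/48)\geq 2$ (which holds since the left-hand side equals $2+\epsilon(47-\epsilon)/24$), giving $\delta\geq 1/2$.

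Next, since $B_{\vec{z}}\subseteq \{W_{\vec{z}}\leq (1-\delta)pn\}$, Proposition~\ref{chernoff} yields
\[
\Pr[B_{\vec{z}}]\leq \exp\!\left(-\tfrac{\delta^2 pn}{2}\right)\leq \exp(-n/16)\leq \exp(-\epsilon^2 n/16),
\]
using $\delta\geq 1/2$, $p\geq 1/2$, and $\epsilon\leq 1$ in turn. Summing over the $2^t-1$ choices of $\vec{z}$ via the union bound (Proposition~\ref{union}) then produces the claimed inequality $\Pr(E_1)\leq (2^t-1)\exp(-\epsilon^2 n/16)$. I do not expect any significant obstacle here: the Chernoff deviation is of constant order (not $o(1)$), so the target bound is quite loose, and the only real content is the elementary inequality $\delta\geq 1/2$.
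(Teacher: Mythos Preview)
Your proof is correct and follows the same strategy as the paper: a union bound over the $2^t-1$ choices of $\vec{z}$, together with a Chernoff estimate for the binomial variable $W_{\vec{z}}\sim\Bin(n,1-\tfrac{1}{2^t-1})$. The only difference is in the arithmetic: the paper applies Chernoff directly with deviation parameter $\epsilon/2$ (using only $t\geq 2$ and $\epsilon\leq \tfrac13$ to check $k\leq (1-\tfrac{\epsilon}{2})pn$), which produces the $\epsilon^2$ factor in the exponent immediately, whereas you exploit the stronger constraint $t\geq\log(1+48/\epsilon)$ to obtain a constant deviation $\delta\geq\tfrac12$, get the sharper bound $\exp(-n/16)$, and then relax via $\epsilon\leq 1$.
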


\begin{proofclaim}
    If $\vec{c} \in \mathbb{F}_2^t \setminus\{\vec{0}\}$ is fixed, then $Y_{\vec{c}} = |\{u \in V(T) \mid \vec{u} \neq \vec{c}\}|$ is a random variable 
    having a binomial law with parameters $n$ and $1-\frac{1}{2^t-1}$. 
    As
    $\epsilon  \leq \frac{1}{3}$ and $t\geq 2$, we have $k \leq \frac{1}{2}n\leq \frac{2}{3}\cdot\frac{5}{6}n\leq(1-\frac{1}{2^t-1})(1-\frac{\epsilon}{2})n$. By Chernoff's Bound (Proposition~\ref{chernoff}), and because $t \geq 2$, we have
    \begin{align*}
    \Pr[\exists \vec{c} \in \mathbb{F}_2^t\setminus\{\vec{0}\}, Y_{\vec{c}} < k] 
    &\leq \sum_{\vec{c} \in \mathbb{F}_2^t\setminus\{0\}} \Pr[Y_{\vec{c}} < k] \\
    &\leq \sum_{\vec{c} \in \mathbb{F}_2^t\setminus\{0\}} \Pr\left[Y_{\vec{c}} < \left(1-\frac{\epsilon}{2}\right)\left(1-\frac{1}{2^t-1}\right)n\right] \\
    &\leq (2^t-1)\exp\left(-\left(\frac{\epsilon}{2}\right)^2\left(1-\frac{1}{2^t-1}\right)\frac{n}{2}\right) \\
    &\leq (2^t-1)\exp\left(-\epsilon^2 \frac{n}{16}\right),
    \end{align*}
    as claimed.
\end{proofclaim}



\begin{claim}\label{claim:proba_B}
    $\Pr(E_2) \leq 3n(n-1)\exp\left(-\epsilon^2\frac{n-2}{4096}\right)$
\end{claim}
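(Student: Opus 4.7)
The plan is to bound $\Pr(E_2)$ by a union bound: there are at most $n(n-1)$ ordered pairs $(u,v)$ with $u \neq v$ and three intersection types to consider, so it suffices to show, for each fixed pair and fixed intersection type, that the probability that $\vec u \neq \vec v$ and the intersection has size at most $(1+\epsilon/4)k/2$ is at most $\exp(-\epsilon^2(n-2)/4096)$.

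To carry this out, I would fix such a pair $(u,v)$ and one of the three intersections, say $\mathcal{I} = N_{T'}^+(u) \cap N_{T'}^-(v)$, and condition on $\vec u \neq \vec v$. The key observation is that for every third vertex $w \in V(T)\setminus\{u,v\}$, membership of $w$ in $\mathcal{I}$ is determined by the pair $(\vec{u}\cdot\vec{w},\vec{v}\cdot\vec{w}) \in \mathbb{F}_2^{2}$: the orientation of the $uw$-arc flips between $T$ and $T'$ exactly when $\vec{u}\cdot\vec{w}=1$, and similarly for $vw$, so there is exactly one value of $(\vec{u}\cdot\vec{w},\vec{v}\cdot\vec{w})$ (depending only on the two original orientations in $T$) for which $w \in \mathcal{I}$. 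Since the $\vec w$ are drawn independently of each other and of $(\vec u, \vec v)$, Lemma~\ref{lemma:proba_2} applies to each $w$ and says that the indicator of $\{w \in \mathcal{I}\}$ dominates a Bernoulli variable with parameter $p_0 := \tfrac{1}{4} - \tfrac{3}{4(2^{t}-1)}$, independently across $w$. The hypothesis $t \geq \log(1+48/\epsilon)$ then ensures $p_0 \geq \tfrac{1}{4} - \tfrac{\epsilon}{64}$.

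Thus $|\mathcal{I}|$ stochastically dominates $\mathrm{Bin}(n-2, p_0)$. Combined with the hypothesis $n-2 \geq 2(1+\epsilon)k$, a direct computation yields
\[
\mathbb{E}[|\mathcal{I}|] - \left(1+\tfrac{\epsilon}{4}\right)\tfrac{k}{2} \;\geq\; \frac{\epsilon(11-\epsilon)(n-2)}{64(1+\epsilon)},
\]
which for $\epsilon \leq 1/3$ is at least $\epsilon(n-2)/8$. Feeding this additive deviation into Proposition~\ref{chernoff} (after rewriting it as a relative deviation $\delta \geq \epsilon/2$) gives a tail bound of the shape $\exp(-c\,\epsilon^{2}(n-2))$ with a constant $c$ much larger than $1/4096$, leaving ample slack for the union bound over the $3n(n-1)$ cases.

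The main obstacle I anticipate is purely arithmetic: tracking the slight deficit $p_0 < \tfrac14$ against the target $(1+\epsilon/4)k/2$ while translating between $k$ and $n-2$ via $n-2 \geq 2(1+\epsilon)k$, so as to extract a clean deviation proportional to $\epsilon(n-2)$ valid uniformly in $\epsilon \in (0,1/3]$. Once the deviation is in hand, the three intersection types are handled identically, since the structural characterisation ``$w \in \mathcal{I}$ iff $(\vec u\cdot\vec w,\vec v\cdot\vec w)$ equals one specific element of $\mathbb{F}_2^{2}$'' is the same in each case, and Lemma~\ref{lemma:proba_2} is symmetric in the values $(x,y)$.
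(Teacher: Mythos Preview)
Your proposal is correct and follows essentially the same approach as the paper: condition on $\vec u,\vec v$ with $\vec u\neq\vec v$, use Lemma~\ref{lemma:proba_2} to show each remaining vertex lies in the given intersection independently with probability at least $p_0=\tfrac14-\tfrac{3}{4(2^t-1)}$, couple $|\mathcal I|$ with a $\mathrm{Bin}(n-2,p_0)$ variable, apply Chernoff, and union-bound over the $3n(n-1)$ choices. The paper's arithmetic is slightly cruder (it works with a relative deviation of $\epsilon/16$, which is what produces the constant $4096$), whereas you extract a sharper constant, but the argument is otherwise identical.
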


\begin{proofclaim}
    Let $u,v$ be distinct vertices and let
    $A = N_{T'}^+(u)\cap N_{T'}^-(v), B = N_{T'}^+(u)\cap N_{T'}^+(v)$ and $C=N_{T'}^-(u)\cap N_{T'}^-(v)$.
    Let $X \in \{A,B,C\}$.
    Once $\vec{u}$ and $\vec{v}$ have been revealed, for every vertex $w \neq u,v$, let $Y_w$ be a random variable with $Y_w=1$ if $w \in X$, $Y_w=0$ otherwise.
    By Lemma~\ref{lemma:proba_2}, $Y_w$ is a random variable following a Bernoulli distribution whose parameter is at least $\frac{1}{4}(1-\frac{3}{2^t-1})$. Further, the $Y_w$ are mutually independent.

    We now define a random variable $X_w$ for every $w \in V(T)\setminus \{u,v\}$ as follows.
    If $Y_w=0$, set $X_w=0$, and otherwise set $X_w=1$ with probability $\frac{\frac{1}{4}(1-\frac{3}{2^t-1})}{\Pr[Y_w=1]}$ and $X_w=0$ otherwise, where the latter random experiments are executed independently. Observe that, as the $Y_w$ are mutually independent, so are the $X_w$. Moreover, $\Pr[X_w=1] = \frac{1}{4}(1-\frac{3}{2^t-1})$
    and $\sum_{w \in V(T) \setminus \{u,v\}} X_w \geq |X|$.
    
    We have $(1+\frac{\epsilon}{4})\frac{k}{2} \leq \frac{1+\frac{\epsilon}{4}}{2+\epsilon}(n-2)/2 = \frac{1}{2}\left(1-\frac{\frac{\epsilon}{4}}{1+\frac{\epsilon}{2}}\right) \frac{n-2}{2} \leq \frac{1}{4}\left(1-\frac{\epsilon}{8}\right)(n-2)$ since $n \geq (2+\epsilon)k+2$.
    Moreover, as $t \geq  \log(\frac{48}{\epsilon}+1)$ we have 
    $\frac{1}{4}(1-\epsilon/8)\leq \frac{1}{4}(1-\epsilon/16)^2 \leq \frac{1}{4}\left(1-\frac{3}{2^t-1}\right)(1-\frac{\epsilon}{16}) = \left(\frac{1}{4}-\frac{3}{4(2^t-1)}\right)\left(1-\frac{\epsilon}{16}\right)$.
    Hence $\left(1+\frac{\epsilon}{4}\right) \frac{k}{2} \leq \left(1-\frac{\epsilon}{16}\right)\left(\frac{1}{4}-\frac{3}{4(2^t-1)}\right)(n-2)$, and by Chernoff's Bound (Proposition~\ref{chernoff})
     \begin{align*}
        \Pr\left[|X| \leq \left(1+\frac{\epsilon}{4}\right)\frac{k}{2}\right]
        & \leq \Pr\left[|X| \leq \left(1-\frac{\epsilon}{16}\right)\left(\frac{1}{4}-\frac{3}{4(2^t-1)}\right)(n-2)\right]\\
        &\leq \exp\left(-\left(\frac{\epsilon}{16}\right)^2\left(\frac{1}{4}-\frac{3}{4(2^t-1)}\right)\frac{n-2}{2}\right)\\    
        &\leq \exp\left(-\epsilon^2\frac{n-2}{4096}\right) 
   \end{align*}
    since $t \geq 5$ implies $\frac{3}{2^t-1} \leq \frac{1}{8}$. 
    Hence, by the Union Bound (Proposition~\ref{union}), $\Pr\left[\min\{|A|,|B|,|C|\} \leq \left(1+\frac{\epsilon}{4}\right)\frac{k}{2}\right] \leq \sum_{X \in \{A,B,C\}}\Pr\left[|X| \leq \left(1+\frac{\epsilon}{4}\right)\frac{k}{2}\right]\leq 3\exp\left(-\epsilon^2\frac{n-2}{4096}\right)$.
    Finally, by the Union Bound over $u,v$, the probability that there exist $u,v$ distinct such that $\min\{|A|,|B|,|C|\} \leq \left(1+\frac{\epsilon}{4}\right)\frac{k}{2}$ is at most
     $3n(n-1)\exp\left(-\epsilon^2\frac{n-2}{4096}\right)$.
\end{proofclaim}

For two disjoint sets of vertices $X,Y$ in $T'$, we denote by $\mu_{T'}(X,Y)$ the size of a largest directed $(X,Y)$-matching in $T'$.

\begin{claim}\label{claim:proba_C}
    $\Pr(E_3) \leq 2^{-k}$.
\end{claim}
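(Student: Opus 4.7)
The plan is to translate $E_3$ into a purely algebraic statement on the random vectors $\vec{u}$ and then apply Lemma~\ref{lemma:proba_3} combined with a union bound.

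First, I would apply König's theorem. If $E_3$ holds, witnessed by disjoint $A,B\subseteq V(T')$ with $|A|,|B|\geq (1+\epsilon/4)k/2$ and $\mu_{T'}(A,B) < k/2$, then König's theorem applied to the bipartite graph whose edges are the arcs of $T'$ from $A$ to $B$ produces $A_1\subseteq A$ and $B_1\subseteq B$ with $|A_1|+|B_1| < k/2$ such that no arc of $T'$ goes from $A\setminus A_1$ to $B\setminus B_1$. Since $|A\setminus A_1|, |B\setminus B_1| > \epsilon k/8$, I can shrink to subsets $A', B'$ of size exactly $a := \lceil \epsilon k/8 \rceil$ still satisfying this property.

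Next, I would encode ``no arc from $A'$ to $B'$ in $T'$'' algebraically. Since $T$ is a tournament, for each $(u,v)\in A'\times B'$ exactly one of $uv, vu$ lies in $A(T)$, and the condition $uv\notin A(T')$ becomes $\vec{u}\cdot\vec{v} = w_{u,v}$, where $w_{u,v}\in\mathbb{F}_2$ equals $1$ if $uv\in A(T)$ and $0$ if $vu\in A(T)$. After fixing arbitrary orderings of $A'$ and $B'$ and packing the $\vec{u}$ (resp.\ $\vec{v}$) as the columns of $U$ (resp.\ $V$) in $(\mathbb{F}_2^t\setminus\{\vec{0}\})^a$, and setting $W := (w_{u,v})_{u\in A',\, v\in B'}$, the event ``no arc from $A'$ to $B'$ in $T'$'' is exactly $U^\top V = W$ with $W$ a fixed matrix depending only on $T, A', B'$. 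Disjointness of $A'$ and $B'$ ensures $U$ and $V$ are independent and uniform, and the assumptions $t\geq 16$ and $k \geq 8t/\epsilon - 1$ yield $a \geq t$ (via the ceiling, using $\epsilon < 8$), so Lemma~\ref{lemma:proba_3} delivers $\Pr[U^\top V = W] \leq 2^{-\epsilon tk/128}$ for every such fixed pair $(A',B')$.

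Finally, I would union-bound over all ordered pairs of disjoint $a$-subsets, of which there are at most $\binom{n}{a}\binom{n-a}{a} \leq (en/a)^{2a} \leq (8eC/\epsilon)^{2a}$ since $n \leq Ck$ and $a \geq \epsilon k/8$, yielding
\[
\Pr(E_3) \leq \left(\tfrac{8eC}{\epsilon}\right)^{2a}\cdot 2^{-\epsilon tk/128}.
\]
The main obstacle---though it is essentially bookkeeping---is to verify that this is at most $2^{-k}$. Taking $\log_2$, using $2a \leq \epsilon k/4 + 2$, and substituting the lower bound $\epsilon t/128 \geq 2C + 2 + \epsilon/4 + \epsilon/8$ coming from $t \geq \frac{128}{\epsilon}(2C+2+\epsilon/4) + 16$, the exponent is bounded above by $-k$ for all $k \geq k_0$. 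The peculiar form of $t$ in the setup is chosen precisely so that $\epsilon t/128$ dominates the unavoidable $(\epsilon/4)\log_2(8eC/\epsilon)$ term arising from the union bound, while $k_0$ is taken large enough to absorb the remaining additive constant $2\log_2(8eC/\epsilon)$.
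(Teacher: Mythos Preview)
Your argument is correct and follows the same skeleton as the paper: reduce to a pair of sets with no arc going the right direction, encode this as $U^\top V = W$, apply Lemma~\ref{lemma:proba_3}, and union-bound. Two differences are worth noting.

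First, you invoke K\"onig's theorem to extract $A',B'$; the paper instead takes a \emph{maximal} directed $(A,B)$-matching $M$ and lets $Y_A,Y_B$ be the unmatched vertices, observing directly that maximality forces $Y_B\Rightarrow Y_A$ in $T'$ (and $|M|<k/2$ forces $|Y_A|,|Y_B|\geq \epsilon k/8$). This is the same conclusion without K\"onig.

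Second, you use a sharper count $\binom{n}{a}^2\leq (8eC/\epsilon)^{2a}$ for the union bound, which then requires the final arithmetic you sketch. The paper is deliberately cruder: it bounds the number of pairs $(Y_A,Y_B)$ inside fixed $A,B$ by $2^{2\lceil(1+\epsilon/4)k/2\rceil}$, shows the per-$(A,B)$ probability is at most $2^{-(2C+1)k}$ directly from the choice of $t$, and then union-bounds over all $(A,B)$ by $2^{2n}\leq 2^{2Ck}$, giving $\Pr(E_3)\leq 2^{2Ck-(2C+1)k}=2^{-k}$ with no appeal to $k_0$. Your remark that ``$k_0$ is taken large enough'' is thus unnecessary (and would be illegitimate, since $k_0$ is already fixed by~\eqref{eq:condition_k_large_enough}); in fact the slack in your $k$-coefficient together with $k\geq 8t/\epsilon-1$ already absorbs the additive $2\log_2(8eC/\epsilon)$.
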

    
\begin{proofclaim}
    Let $A, B \subseteq V(T')$ be disjoint sets of $\lceil (1+\frac{\epsilon}{4})\frac{k}{2} \rceil$ vertices.
    We shall prove that with high probability there is a directed $(A,B)$-matching in $T'$ of size at least $\frac{k}{2}$.
    
    Let $M$ be a maximal directed $(A,B)$-matching and let $Y_A$ (resp. $Y_B$) the the set of vertices in $A$ (resp. in $B$) incident to no arc of $M$.
    Then $Y_B \Rightarrow Y_A$ in $T'$ since $M$ is maximal.
    Moreover, if $|M| \leq k/2$, then $|Y_A|,|Y_B| \geq (1+\frac{\epsilon}{4})\frac{k}{2}-\frac{k}{2} = \frac{\epsilon}{8}k$.
    
    For every such $Y_A \subseteq A, Y_B \subseteq B$, we identify $Y_A$ and $Y_B$ with the matrices whose columns are the $\vec{u}$ for $u \in Y_A$ (resp. $u \in Y_B$).    

    We also denote by $T(Y_B,Y_A)$ the $|Y_B| \times |Y_A|$ matrix over $\mathbb{F}_2$ whose cell $(b,a)$ equals $1$ if and only if $ab \in A(T)$.
    Then observe that $Y_B \Rightarrow Y_A$ in $T'$ if and only if $Y_B^\top \cdot Y_A = T(Y_B,Y_A)$.
    By these observations, we have
    \[
    \renewcommand{\arraystretch}{1.5}
    \begin{array}{r c l}
       \Pr[\mu_{T'}(A,B)<k]  & \leq & 
       \Pr[\exists Y_A \subseteq A, |Y_A| \geq \frac{\epsilon}{8}k, \exists Y_B \subseteq B, |Y_B| \geq \frac{\epsilon}{8}k, Y_B \Rightarrow Y_A \text{ in } T']  \\
       & \leq & \Pr[\exists Y_A \subseteq A, |Y_A| \geq \frac{\epsilon}{8}k, \exists Y_B \subseteq B, |Y_B| \geq \frac{\epsilon}{8}k, Y_B^\top \cdot Y_A = T(Y_B,Y_A)]  \\
       & \leq & 2^{2\left\lceil (1+\frac{\epsilon}{4})\frac{k}{2} \right\rceil} 2^{-\frac{\epsilon t k}{128}} \hspace{1cm} \text{ by Lemma~\ref{lemma:proba_3} and the Union Bound} \\
       & \leq & 2^{\left(1+\frac{\epsilon}{4}\right)k+2} 2^{-\frac{\epsilon t k}{128}} \hspace{1.325cm}  \\
       & \leq & 2^{-(2C+1)k} \\
    \end{array}
    \]
    
    as $t \geq \frac{128}{\epsilon}\left(2C+2+\frac{\epsilon}{4}\right) + 16$ and $\frac{\epsilon k}{64} \geq \frac{1}{8}$.
    It follows from the Union Bound (Proposition~\ref{union}) that $\Pr(E_3) \leq 2^{2n} 2^{-(2C+1)k} \leq 2^{-k}$ using the fact that $n \leq Ck$.
\end{proofclaim}

We can now conclude using Claims~\ref{claim:decompose_into_events_A_B_C_},~\ref{claim:proba_A},~\ref{claim:proba_B}
and~\ref{claim:proba_C} and the Union Bound: 
\[
\begin{split}
    \Pr[T' \text{ not } k\text{-strong}] &\leq \Pr(E_1) + \Pr(E_2)+\Pr(E_3) \\
    &\leq (2^t-1) \exp\left(-\epsilon^2\frac{n}{16}\right) + 3n^2\exp\left(-\epsilon^2\frac{n-2}{4096}\right) + 2^{-k} \\
    &\leq (2^t-1) \exp\left(-\epsilon^2\frac{(2+\epsilon)k}{16}\right) + 3(Ck)^2\exp\left(-\epsilon^2\frac{(2+\epsilon)k}{4096}\right) + 2^{-k} \\
    &<1, \\
\end{split}
\]
by \eqref{eq:condition_k_large_enough}.
This proves that there exist $X_1, \dots, X_t \subseteq V(T)$ such that $T'=\Inv(T;X_1,\dots, X_t)$ is $k$-strong.
\end{proof}

\section{Conclusion}\label{conclusion}

In this paper, we investigated the minimum number of inversions needed to make a digraph $k$-strong or $k$-arc-strong.
Many open questions remain.

\medskip

From an algorithmic perspective, it would be interesting to gain a deeper understanding of the complexity when the problem is restricted to tournaments. While the complexity for fixed $k$ is settled by Corollary~\ref{cor:sinv_k-poly}, the following question remains open:

\begin{problem}
    What is the complexity of computing $\sinv_k(T)$ (resp. $\sinv'_k(T)$) for a given tournament $T$ if $k$ is part of the input ?
\end{problem}

\medskip
Regarding bounds, we provided many on $\sinv_k(n)$ and $\sinv'_k(n)$. But those functions still need to be better understood. 

Firstly, it would be nice to better understand the asymptotic behaviour of $\sinv_k(n)$ and determine whether an analogue of Theorem~\ref{thm:extrem} exists for $\sinv_k$.
\begin{problem}
  Find good lower and upper bounds on $\sinv_k(n)$.
\end{problem}

Secondly, we posed Conjecture~\ref{conjM} which we restate. 
\theconj*
Conjecture~\ref{conjM} is motivated by the following consideration. By Proposition~\ref{4k-2}, every tournament $T$ of order $n>4k-2$ has a vertex $v$ with $d_T^-(v)\geq k$ and $d_T^+(v)\geq k$. 
Moreover, adding a vertex with in- and out-degree at least $k$ to a $k$-(arc-)strong digraph results in a $k$-(arc-)strong digraph. 
Hence any $k$(-arc)-strengthening family of $T-v$ is also a $k$-(arc-)strengthening family of $T$. Thus $\sinv_k(T) \leq \sinv_k(T-v) \leq m_k(n-1) $ and $\sinv'_k(T) \leq \sinv'_k(T-v) \leq m'_k(n-1) $. Hence $m_k(n) \leq m_k(n-1)$ and $m'_k(n) \leq m'_k(n-1)$.
Therefore, in order to approach Conjecture~\ref{conjM} and Problem~\ref{rsedth}, 
it is sufficient to consider tournaments whose order is in the range from $2k+1$ to $4k-2$.

\medskip

Another intriguing question is whether $M_k = M'_k$.
\begin{problem}
    Is it true that $M_k = M'_k$ for every positive integer $k$ ?
\end{problem}
In Subsection~\ref{sec:52}, we proved that this is the case for $k=1,2$.

\medskip

Furthermore, there is still a significant gap between the logarithmic lower bounds and the linear upper bounds on $M'_k$ and $M_k$. Therefore it would be good to improve these.
A first question is the following.
\begin{problem}
    Are $M'_k$ and $M_k$ sublinear functions of $k$ ? 
\end{problem}

We also proved a collection of bounds for $N_k(1)$ and $N_k(3)$. It would be interesting to have stronger bounds on $N_k(i)$ for any integer $i$. In particular, a tight bound for $N_k(1)$ would be satisfying.

Finally, Theorem~\ref{thm:2+eps} states that every tournament that is a constant factor bigger than $2k$ can be made $k$-strong by a constant number of inversions. It would be interesting to know if this result can be strengthened in the following way:

\begin{problem}
    Is there an integer $t$ such that for every $\epsilon>0$, there is an integer $k_0$ such that for every $k \geq k_0$, for every tournament $T$ on at least $(2+\epsilon)k$ vertices, we have $\sinv_k(T)\leq t$?
\end{problem}

Proposition~\ref{nk1unter} shows that such an integer would have to satisfy $t \geq 2$, so $t=2$ is the first open case. The analogous statement for $\sinv'_k$ is also open.

\bibliographystyle{alpha}
\bibliography{biblio}

\end{document}